\newtheorem{theorem}{Theorem}[section]
\newtheorem{lemma}{Lemma}[section]
\newtheorem{corollary}{Corollary}[section]
\newtheorem{proposition}{Proposition}[section]
\theoremstyle{definition}
\newtheorem{definition}[theorem]{Definition}
\theoremstyle{remark}
\newcommand{\ds}{\displaystyle}
\newcommand{\Z}{\mathbb{Z}}
\newcommand{\R}{\mathbb{R}}
\newcommand{\Rn}{{\R^n}}
\newcommand{\C}{\mathbb{C}}
\newcommand{\cA}{\mathcal{A}}
\newcommand{\cF}{\mathcal{F}}
\newcommand{\cH}{\mathcal{H}}
\newcommand{\cHloc}{{\mathcal{H}_{\text{loc}}}}
\newcommand{\cM}{\mathcal{M}}
\newcommand{\cR}{\mathcal{R}}
\newcommand{\cS}{\mathcal{S}}
\newcommand{\cRjeta}{{\cR_{j,\eta}}}
\newcommand{\cRjpsi}{{\cR_j^\psi}} 
\newcommand{\bmo}{{\rm bmo}}
\newcommand{\BMO}{{\rm BMO}}
\newcommand{\vmo}{{\rm vmo}}
\newcommand{\VMO}{{\rm VMO}}
\newcommand{\CMO}{{\rm CMO}}
\newcommand{\cmo}{{\rm cmo}}
\newcommand{\lmo}{{\rm lmo}}
\newcommand{\LMO}{{\rm LMO}}
\newcommand{\BLO}{{\rm BLO}}
\newcommand{\bmop}{{\text{bmo}, p}}
\newcommand{\BMOloc}{{{\rm BMO}_{\text{loc}}}}
\newcommand{\LMOloc}{{{\rm LMO}_{\text{loc}}}}
\newcommand{\hone}{{h^1}}
\newcommand{\Hone}{{H^1}}
\newcommand{\hp}{{h^p}}
\newcommand{\Hp}{{H^p}}
\newcommand{\Honeb}{{H^1_b}}
\newcommand{\cHoneb}{{\cH^1_b}}
\newcommand{\honeb}{{h^1_b}}
\newcommand{\honeperb}{{h^1_{{\rm Perez},b}}}
\newcommand{\honeatomb}{{h^1_{{\rm atom},b}}}
\newcommand{\honefinb}{{h^1_{{\rm finite},b}}}
\newcommand{\Lone}{{L^1}}
\newcommand{\Loneloc}{{L^1_{{\rm loc}}}}
\newcommand{\Lp}{{L^p}}
\newcommand{\Linfty}{{L^\infty}}
\newcommand{\Linftyc}{{L^\infty_c}}
\newcommand{\Cinftyc}{{C^\infty_c}}
\newcommand{\Mpsi}{{M_\psi}}
\newcommand{\fM}{\mathfrak{M}}
\newcommand{\fMnt}{{\fM_{\text{nt}}}}
\newcommand{\cMb}{{\cM_b}}
\newcommand{\fhat}{{\hat{f}}}
\newcommand{\lbk}{\left(}
\newcommand{\rbk}{\right)}
\newcommand{\lsk}{\left[}
\newcommand{\rsk}{\right]}
\newcommand{\lb}{\lbrace}
\newcommand{\rb}{\rbrace}
\newcommand{\ra}{\rightarrow}
\DeclareMathOperator*{\supp}{supp}
\DeclareMathOperator*{\Lip}{Lip}
\DeclareMathOperator*{\Log}{Log}
\DeclareMathOperator*{\grad}{\!\nabla\!}
\DeclareMathOperator*{\sgn}{sgn}
\numberwithin{equation}{section}
\title{$\hone$ boundedness of Localized Operators and Commutators with bmo and lmo}
\author{Galia Dafni, Chun Ho Lau \thanks{The authors were partially supported by the Natural Sciences and Engineering Research Council
(NSERC) of Canada, and the Centre de recherches math\'{e}matiques (CRM)} }
\date{}
\begin{document}

\maketitle





\begin{abstract}
We first consider two types of localizations of singular integral operators of convolution type, and show, under mild decay and smoothness conditions on the auxiliary functions, that their boundedness on the local Hardy space $\hone(\Rn)$ is equivalent. We then study the boundedness on $\hone(\Rn)$ of the commutator $[b,T]$ of an inhomogeneous singular integral operator with $b$ in $\bmo(\Rn)$, the nonhomogeneous space of functions of bounded mean oscillation. We define local analogues of the atomic space $\Honeb(\Rn)$ introduced by P\'erez in the case of the homogeneous Hardy space and $\BMO$, including a variation involving atoms with approximate cancellation conditions.  For such an atom $a$, we prove integrability of the associated commutator maximal function and of $[b,T](a)$.  For $b$ in $\lmo(\R^n)$, this gives $\hone$ to $L^1$ boundedness of $[b,T]$. Finally,  under additional approximate cancellation conditions on $T$, we show boundedness to $\hone$. 
\end{abstract}


\bibliographystyle{amsplain}
\section{Introduction} 
The real Hardy spaces $\Hp(\Rn)$ are a class of function spaces that has been extensively studied in harmonic analysis for over 50 years.  In particular, while the boundedness of singular integral operators on $\Lp(\Rn)$ for $1 < p < \infty$ does not extend to $0 < p \leq 1$, under sufficient smoothness and cancellation conditions such operators are bounded on $\Hp(\Rn)$, $0 < p \leq 1$.  At the other end of the scale, $p = \infty$, the space $\BMO(\Rn)$ of functions of bounded mean oscillation introduced by John and Nirenberg \cite{JN} replaces $\Linfty$ and is the dual of $\Hone$ - see the fundamental paper of Fefferman and Stein \cite{FS} and more recent exposition in \cite{SteinHA}.

The focus of our paper is on a nonhomogeneous version of these spaces, introduced by Goldberg  \cite{Goldberg1, Goldberg2} under the name ``local Hardy spaces", denoted $\hp(\Rn)$. Membership in $\hp(\Rn)$ does not require the global vanishing moment conditions necessary for $f \in \Hp(\Rn)$ and so, unlike  the case of $\Hp(\Rn)$, important classes of smooth functions such as the Schwartz spaces $\cS(\Rn)$ are contained (and are dense) in $\hp(\Rn)$.  Moreover,  $\hp(\Rn)$ is closed under multiplication by smooth cut-off functions, which is why these are sometimes called ``localizable" Hardy spaces, and are more suitable to analysis on different settings such as domains, manifolds and spaces of homogeneous type, as well as the study of partial differential equations and pseudo-differential operators.

We restrict ourselves to $\hone(\Rn)$, which satisfies $\Hone(\Rn) \subsetneq \hone(\Rn) \subsetneq \Lone(\Rn)$, and the nonhomogeneous version of $\BMO(\Rn)$, denoted $\bmo(\Rn)$ and identified by Goldberg as the dual of $\hone(\Rn)$.  In addition to characterizations via maximal functions and atomic decomposition, Goldberg also identified $\hone(\Rn)$  as the space of functions in $\Lone(\Rn)$ whose ``local Riesz transforms" $r_j$, $j = 1, \ldots, n$, are in $\Lone(\Rn)$.  The $r_j$ are obtained by localizing the multipliers of the Riesz transforms $R_j$ on the Fourier transform side via multiplication with a Schwartz function vanishing near the origin.  We consider more general localizations of Calder\'on--Zygmund convolution-type singular integral operators, obtained either by multiplying the kernel by an appropriate function (as in the ``truncated" Hilbert transform defined in \cite{Goldberg1}), or localizing on the Fourier transform side.  We show, under mild decay and smoothness assumptions on the auxiliary functions, that the boundedness of different localizations of the same kernel on $\hone(\Rn)$ is equivalent (Theorem~\ref{Mainthm1}), allowing us to characterize $\hone$ by a broader class of localized Riesz transforms (Corollary~\ref{localriesz}).

We then turn to another important class of operators, the commutators of singular integral operators with multiplication operators: $[b,T]f:= bT(f) - T(bf)$.  Coifman, Rochberg and Weiss \cite{CRW} proved that $[b,T]$, where $b\in BMO(\Rn)$ and $T$ is a Calder\'on--Zygmund operator, is bounded on $\Lp(\Rn)$ for all $1<p<\infty$. They also proved a converse: for the Riesz transforms $R_j$, if $[b,R_j]$, $j = 1, \ldots, n$, are all bounded on $L^p(\Rn)$ for some $1<p<\infty$, then $b\in BMO(\Rn)$. 
Subsequently, Uchiyama \cite{Uchiyama1} proved that $[b,T]$ is compact on $L^p(\Rn)$, $1<p<\infty$, if and only if $b\in \CMO(\Rn)$, where $\CMO(\Rn)$ is the $\BMO(\Rn)$-closure of $\Cinftyc(\Rn)$. Janson \cite{Janson1} extended the boundedness of the commutator to Orlicz-type spaces. Recently, Hyt\"{o}nen \cite{Hytonen1} completely characterized $\Lp \mapsto L^q$ boundedness.

The above results are all for $L^p$ spaces with $1<p<\infty$. We are interested in the endpoint case $p = 1$. For a Calder\'on--Zygmund operator $T$, we have weak-type $(1,1)$ boundedness as well as from $\Hone(\Rn)$ to $\Lone(\Rn)$. Does the commutator $[b,T]$ also satisfy this type of boundedness? The answer turns out to be no. Harboure, Segovia and Torrea \cite{HST} proved that there is no way to get boundedness of $[b,H]$ from $\Hone(\R)$ to $\Lone(\R)$, where $H$ be the Hilbert transform, unless $b$ is a constant function, and this also holds in the case of boundedness of $[b,H]$ from $\Linftyc(\R)$ to $BMO(\R)$.

This means we need to seek a smaller subspace of $\Hone$ to get the integrability of the commutator.  In addition to proving $L\Log L$ to weak-$\Lone$ boundedness of the commutator $[b,T]$, P\'erez \cite{Perez} introduced a special type of $\Hone$ atom that has extra cancellation against $b$, and showed that the commutator $[b,T](a)\in \Lone(\Rn)$ for all such $a$.  He then defined the space $\Honeb(\Rn)$ via atomic decompositions.  The boundedness on infinite linear combinations of atoms, however, does not follow from the boundedness on one atom (see \cite{Bownik}), so another characterization of the space was needed.  

Ky \cite{Ky} introduced another space $\Honeb(\Rn)$ through a specially defined commutator maximal function, and proved that it is the largest subspace of $\Hone(\Rn)$ such that the commutator $[b,T]$ is bounded from this space to $\Lone(\Rn)$.  As P\'erez's atomic space is continuously contained in Ky's space, the boundedness on atoms could be extended to get boundedness on the whole atomic space.  The reverse inclusion, namely whether every function in Ky's maximal $\Honeb(\Rn)$ space has an atomic decomposition into P\'erez atoms, was not shown.  Ky also showed that under suitable assumptions, $[b,T]$ is  bounded from $\Hone$ to $\hone(\Rn)$ and also to $\Hone(\Rn)$, and these results were extended  to metric measure space of homogeneous type by Fu, D. Yang and S. Yang \cite{FYY1}.
 
Unlike the case for $\Hone(\Rn)$, commutators of the form $[b,T]$ with nonconstant $b$ can be bounded on $\hone(\Rn)$.  Hung and Ky \cite{HungKy} proved such boundedness for $b$ with logarithmically vanishing mean oscillation and $T$ in a class of pseudo-differential operators.   

In this paper we introduce an $\hone(\Rn)$ version of the P\'erez atomic space, as well as another atomic space which we call $\honeatomb(\Rn)$, containing it, defined using new $\honeb$ atoms which only satisfy approximate cancellation conditions.  We then study the boundedness of $[b,T]$ for $b \in \bmo(\Rn)$ and $T$ an inhomogeneous singular integral operator, and show that such commutators take $\honeb$ atoms to $\Lone(\Rn)$ (Theorem~\ref{h1bL1}).  Moreover, imposing nonhomogeneous cancellation conditions on $T$ by considering $T^*1$ and $T^*b$, we are able to show boundedness to $\hone(\Rn)$ (Theorem~\ref{thm: h1bh1}).  However, as in the case of the P\'erez atomic space $\Honeb$, it is not possible to extend this boundedness to all of $\honeatomb(\Rn)$ without a different characterization of the space.  While we do not have a maximal characterization, we do show that a local analogue of Ky's commutator maximal function takes $\honeb$ atoms to $\Lone(\Rn)$.
 
Finally, restricting $b$ to $\lmo(\Rn)$, the functions of logarithmically vanishing mean oscillation in $\bmo(\Rn)$, we are able to identify the space $\honeatomb(\Rn)$ with $\hone(\Rn)$ (Theorem~\ref{h1atombish1}) and thus conclude the boundedness of the commutator on the whole space, generalizing the results of Hung and Ky \cite{HungKy}.

Section~\ref{sec:prelim} gives some background on $\hone$, with particular emphasis on atomic decompositions and approximate cancellation conditions, as well as on $\bmo$ and $\lmo$.  Section~\ref{sec:LCK} deals with the various localizations of  Calder\'on--Zygmund operators of convolution type and their boundedness on $\hone$, and ends with the definition of inhomogeneous singular integral operators.  Section~\ref{sec:spaces} introduces the commutator maximal functions and atomic spaces in the nonhomogeneous case, and proves relations between them.  Finally, Section~\ref{sec:commutators} contains the results on the boundedness of the commutators on the atoms.

\section{Preliminaries}
\label{sec:prelim}
The Lebesgue measure of a set $A\subset \Rn$ is denoted by $|A|$, and  the average of an integrable function $f$ on a set $E$ of positive finite measure by
$$f_E : = \fint_E f : = \frac{1}{|E|} \int_E f(x)dx.$$

Balls will be denoted by $B(x_0, r)$ and the radius of a ball $B$ by $r(B)$. The dilated ball $B(x_0, \delta r)$, $\delta > 0$, will be denoted by $\delta B$.

The constant $C$ used in inequalities will may vary from line to line.  
We write $\alpha \lesssim \beta$ if there exists a constant $C$ (which may depend on other factors but not on $\alpha$ and $\beta$) such that $\alpha \leq C\beta$, and $\alpha \approx \beta$ if $\alpha \lesssim \beta$ and $\beta \lesssim \alpha$.

We denote the Schwartz space by $\cS(\Rn)$ and the seminorms defining its topology by 
$$\|\varphi\|_{\alpha,\beta} := \sup_{x \in \Rn} |x^\alpha \partial_x^\beta \varphi(x)|, \quad \alpha, \beta \in \Z_+^n.$$
Its dual, $\cS'(\Rn)$, is the space of tempered distributions.  

\subsection{Local Hardy spaces}
Goldberg \cite{Goldberg1,Goldberg2} introduced nonhomogeneous versions of the real Hardy spaces $H^p(\Rn)$, called ``local Hardy spaces" and denoted by $h^p(\Rn)$, and characterized them by means of maximal functions and atomic decomposition.  
\begin{definition}[\cite{Goldberg1}]
\label{def:Goldberg-atom}
For  $0 < p \leq 1$, a (normalized) Goldberg $h^p$ atom $a$ is a function supported in a cube $Q$ with 
\begin{itemize}
\item[(i)] $\|a\|_\Linfty \leq |Q|^{-1}$; 
\item[(ii)] if $|Q| < 1$, $\int a(x) x^\alpha dx = 0$
for $|\alpha| \leq \lfloor n(1/p-1) \rfloor$.  
\end{itemize}
\end{definition}

An atomic decomposition for $h^p$ where the atoms need only have approximate moment conditions was introduced in \cite{DafniThesis} and later refined in \cite{DY,DMY} for the case $p =1$ in the more general setting of spaces of homogeneous type  (see also \cite{Komori, DLPV1, DLPV2} for $p < 1$).  
\begin{definition}[\cite{DY}]
\label{def:R-atom}
Fix $R > 0$, $1 < q \leq \infty$. We say $a$ is an  $R$-approximate $(1,q)$ atom if 
\begin{itemize}
\item $\supp(a)\subset B(x_0,r)$ for some $x_0\in \Rn$ and $r>0$;
\item $\|a\|_{L^q}\leq |B(x_0,r)|^{\frac1{q} - 1}$; 
\item $|\int a|\leq [\log(1+\frac R r)]^{-1}.$
\end{itemize}
\end{definition}

Any of the following equivalent characterizations can be taken as the definition of $\hone(\Rn)$.
\begin{proposition}
\label{prop:h^1}
Let $f \in \cS'(\Rn)$.  Then the following are equivalent:
\begin{enumerate}
\item For a fixed $\psi \in \cS(\Rn)$ with $\int \psi \neq 0$, 
$$\|f\|_{\hone,\psi}:= \|\Mpsi(f)\|_{L^1(\Rn)} < \infty,$$
where
$$\Mpsi(f)(x) = \sup_{0 < t < T} |f * \psi_t(x)|, \quad \psi_t(y) = t^{-n}\psi(t^{-1}y).$$
\item For a fixed $T > 0$,
$$\|f\|_{\hone,T}:= \|\cM(f)\|_{L^1(\Rn)} < \infty,$$
where
$$\cM(f)(x):=\sup_{\phi} |\langle f, \phi \rangle|, $$
and the supremum is taken over all $\phi\in C^1(\Rn)$ with $\supp(\phi)\subset B(x,t)$, $0<t<T$, $\|\phi\|_{L^{\infty}}\leq |B(x,t)|^{-1}$ and $\|\nabla\phi\|_{L^{\infty}}\leq [t|B(x,t)|]^{-1}$.
\item For $N$ sufficient large
$$\|f\|_{\hone,\rm{max}}:= \|\fM(f)\|_{L^1(\Rn)} < \infty,$$
where
$$\fM f(x):=\sup\{ |f * \varphi_t(x)|: 0 < t < 1, \varphi\in \cF_N\}$$
and
$$\cF_N = \{\varphi\in \cS(\Rn): \|\varphi\|_{\alpha,\beta} \leq 1\; \forall\; |\alpha|, |\beta| \leq N\}.$$
\item 
$$\|f\|_{\hone,\rm{atom}}:= \inf \sum |\lambda_j| < \infty,$$
where the infimum is taken over all decompositions $f = \sum \lambda_j a_j$ in $\cS'(\Rn)$ with $\{\lambda_j\} \in \ell^1$ and where the $a_j$ are Goldberg $\hone$ atoms as in Definition~\ref{def:Goldberg-atom}.
\item For a fixed $R > 0$, $1 < q \leq \infty$,
$$\|f\|_{\hone,R}:= \inf \sum |\lambda_j| < \infty,$$
where the infimum is taken over all decompositions $f = \sum \lambda_j a_j$ in $\cS'(\Rn)$ with $\{\lambda_j\} \in \ell^1$ and where the $a_j$ are $R$-approximate $(1,q)$ atoms as in Definition~\ref{def:R-atom}.
\end{enumerate}
In case these conditions hold, we have
$$\|f\|_{\hone,\psi} \approx \|f\|_{\hone,T} \approx \|f\|_{\hone,\rm{max}} \approx \|f\|_{\hone,\rm{atom}} \approx \|f\|_{\hone,R},$$
where the constants depend on the choices of $T$, $R$ and $N$.
\end{proposition}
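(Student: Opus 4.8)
The plan is to treat the equivalence of the first four characterizations as classical and to splice in the approximate-atom characterization $(5)$ by proving $(4)\Rightarrow(5)\Rightarrow(2)$ (so that, together with $(1)\Leftrightarrow(2)\Leftrightarrow(3)\Leftrightarrow(4)$, all five are equivalent). The equivalences $(1)\Leftrightarrow(2)\Leftrightarrow(3)\Leftrightarrow(4)$ are due to Goldberg \cite{Goldberg1,Goldberg2}, up to two routine reconciliations: passing between the truncation scales $t<T$ and $t<1$ (for $t$ bounded away from $0$, $\sup_t|f*\varphi_t|$ is dominated, with a constant depending on the lower bound and on $\varphi$, by the Hardy--Littlewood maximal function, so only the equivalence constants change), and passing between the single test function $\psi$, the $C^1$ bumps of $(2)$ and the Schwartz family $\cF_N$ (the Fefferman--Stein comparison of smooth and grand maximal functions, cf.\ \cite{SteinHA}). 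One also uses that $\cM f\in\Lone$ (equivalently $\Mpsi f\in\Lone$, or $\fM f\in\Lone$) forces $f$ to be an $\Lone$ function, via the weak-$*$ limit of $f*\psi_t$ as $t\to0^+$ for a compactly supported bump $\psi$; this is what makes the cancellation condition in $(5)$ meaningful.

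For $(4)\Rightarrow(5)$ it suffices to write each Goldberg $\hone$ atom $a$, supported on a cube $Q$, as a \emph{finite} sum $\sum_i\lambda_i\tilde a_i$ of $R$-approximate $(1,q)$ atoms with $\sum_i|\lambda_i|\le C(n,q,R)$, and then reassemble an approximate-atomic decomposition of $f$ with comparable $\ell^1$ norm. If $|Q|<1$ then $\int a=0$ and $\|a\|_{L^q}\le\|a\|_\Linfty|Q|^{1/q}\le|Q|^{1/q-1}$, so $a$ is already (a dimensional multiple of) an $R$-approximate $(1,q)$ atom. If $|Q|\ge1$ there is no moment hypothesis, so instead I would cut $Q$ --- or, when $Q$ is small compared with $R$, a single cube containing it --- into subcubes $Q_i$ whose side-length is a suitable dimensional multiple of $\max(1,R)$, large enough that $[\log(1+R/r(Q_i))]^{-1}\ge1$; then $\tilde a_i:=(|Q|/|Q_i|)\,a\chi_{Q_i}$ has $\|\tilde a_i\|_{L^q}\le|Q_i|^{1/q-1}$ and $|\int\tilde a_i|\le\|\tilde a_i\|_{\Lone}\le1\le[\log(1+R/r(Q_i))]^{-1}$, hence is (a dimensional multiple of) an $R$-approximate $(1,q)$ atom, while $\lambda_i:=|Q_i|/|Q|$ satisfies $\sum_i\lambda_i\le C(n,R)$ by a volume count.

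For $(5)\Rightarrow(2)$ the crux is the uniform estimate $\|\cM a\|_{\Lone}\le C(n,q,R,T)$ for an $R$-approximate $(1,q)$ atom $a$ with $\supp a\subset B=B(x_0,r)$; the sublinearity of $\cM$ together with $(1)\Leftrightarrow(2)$ then give $\|f\|_{\hone,\psi}\lesssim\sum|\lambda_j|$ for any approximate-atomic decomposition $f=\sum\lambda_j a_j$. Since $\cM$ is dominated pointwise by the Hardy--Littlewood maximal operator it is bounded on $L^q$, so by Hölder $\|\cM a\|_{\Lone(2B)}\le|2B|^{1-1/q}\|\cM a\|_{L^q}\lesssim|B|^{1-1/q}\|a\|_{L^q}\le1$. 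If $r\ge T$, a $C^1$ bump at scale $t<T$ centred at $x$ can meet $B$ only when $|x-x_0|<T+r<2r$, so $\cM a$ is supported in $2B$ and we are done. If $r<T$, then for $x\notin2B$ and any admissible $\phi$ with $\langle a,\phi\rangle\ne0$ the defining ball $B(x,t)$ has $|x-x_0|-r<t<T$, whence $t\gtrsim|x-x_0|$ and $\cM a(x)=0$ once $|x-x_0|\ge T+r$; writing $\langle a,\phi\rangle=\int a(y)\bigl(\phi(y)-\phi(x_0)\bigr)\,dy+\phi(x_0)\int a$ and using $\|\nabla\phi\|_\Linfty\lesssim|x-x_0|^{-n-1}$, $\|\phi\|_\Linfty\lesssim|x-x_0|^{-n}$, $\|a\|_{\Lone}\le1$ and $|\int a|\le[\log(1+R/r)]^{-1}$, one obtains
$$\cM a(x)\ \lesssim\ \frac{r}{|x-x_0|^{n+1}}\ +\ \frac{1}{|x-x_0|^{n}}\,\min\!\bigl(1,[\log(1+R/r)]^{-1}\bigr),\qquad 2r<|x-x_0|<T+r.$$
Integration contributes $\lesssim1$ from the first term, and from the second a quantity bounded by $\min\bigl(1,[\log(1+R/r)]^{-1}\bigr)\log\bigl((T+r)/(2r)\bigr)\le C(R,T)$, by elementary estimates distinguishing $r\gtrsim R$ (where $[\log(1+R/r)]^{-1}\gtrsim1$) from $r\lesssim R$ (where one uses $\log(R/r)\le\log(1+R/r)$).

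The main obstacle is precisely this last estimate: one must bound $\|\cM a\|_{\Lone}$ \emph{uniformly in the scale $r$} of the atom, playing the approximate cancellation $|\int a|\le[\log(1+R/r)]^{-1}$ against the merely logarithmic decay of the far field of $\cM a$, and exploiting the truncation $t<T$ to confine $\cM a$ to a bounded neighbourhood of $B$ --- without which the second term above would fail to be integrable. The bookkeeping in $(4)\Rightarrow(5)$ --- choosing the subdivision coarse enough that $[\log(1+R/r(Q_i))]^{-1}\ge1$ yet keeping $\sum_i\lambda_i$ controlled --- is elementary, but must be done with some care in the borderline range where $r(Q)$ and $R$ are comparable.
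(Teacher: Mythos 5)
Your proposal is correct, but note that the paper does not actually prove Proposition~\ref{prop:h^1}: it delegates (1)$\Leftrightarrow$(3)$\Leftrightarrow$(4) to \cite[Theorem 1 and Lemma 5]{Goldberg1} and (2)$\Leftrightarrow$(4)$\Leftrightarrow$(5) to \cite[Section 7]{DY} and \cite[Lemma 1]{DLPV1}. What you have written is essentially a reconstruction of those cited arguments, and your two substantive steps are sound. The splitting of a large-cube Goldberg atom into finitely many approximate atoms of side $\sim\max(1,R)$ is the standard way to absorb the missing moment condition (just keep track of the ball/cube discrepancy in Definitions~\ref{def:Goldberg-atom} and \ref{def:R-atom}: since $1/q-1<0$, passing to the circumscribed ball costs a dimensional factor, which your ``dimensional multiple'' correctly absorbs). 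The far-field estimate in (5)$\Rightarrow$(2) --- using that a test ball meeting $\supp a$ forces $t>|x-x_0|/2$ and confines $\cM a$ to $|x-x_0|<T+r$, then splitting $\langle a,\phi\rangle$ into a gradient term and $\phi(x_0)\int a$ and playing $[\log(1+R/r)]^{-1}$ against $\log((T+r)/r)$ --- is precisely the computation the paper itself carries out later for $\cM_b a$ in the proof of Proposition~\ref{h1binc} (compare \eqref{eqn:intersection}--\eqref{eqn:not2Bint}), so your method is consistent with the paper's own techniques. Two small points of precision: characterization (2) is due to \cite{DY} rather than Goldberg; and the genuinely hard implication, from finiteness of a maximal function to the existence of an atomic decomposition, is not one of your ``routine reconciliations'' and does not follow from the Fefferman--Stein comparison of maximal functions alone --- it requires a Calder\'on--Zygmund-type decomposition at the level of the grand maximal function. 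You (like the paper) are entitled to treat that direction as classical, but it should be flagged as such rather than folded into the normalization-matching remarks.
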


The equivalence of conditions (1), (3) and (4) of Proposition~\ref{prop:h^1} is contained in \cite[Theorem 1 and Lemma 5]{Goldberg1}.  The equivalence of conditions (2), (4) and (5) is shown in \cite[Section 7]{DY} on a doubling metric-measure space, with $C^1$ replaced by Lipschitz, for arbitrary $T$, $R$ and $q$ (see also \cite[Lemma 1]{DLPV1} for a proof that (4) implies (2) in the more general case of all $p \leq 1$).  The maximal functions $\cM$ and $\fM$ are often given the name {\em grand maximal function}.  It is also possible to define nontangential versions of these (see Definition~\ref{def:Ky} below for the case of $\Hone$) and get an equivalent characterization of $\hone$ (resp.\ $\Hone$) - this is part of Goldberg's theorem (see \cite[Section III.1]{SteinHA} for the homogeneous Hardy spaces).  Further characterizations of local Hardy spaces on spaces of homogeneous type were given in \cite{HYY}.

In what follows, we will use mostly condition (2) with $T= 1$, and abbreviate $\|f\|_{\hone,1}$ to $\|f\|_{\hone}$, and condition (5) with $R = 1$ and $q = 2$, in which case we will call the $1$ approximate $(1,2)$ atoms simply $\hone$ atoms.  

We will also need the notion of a molecule, which is a generalization of an atom not requiring compact support.  Molecules are useful in showing boundedness of operators on $h^p$ since, typically, singular integral operators take atoms to molecules.  If we know that molecules are in $h^p$ with bounded norm, then we have shown the boundedness of the operator on atoms.

\begin{definition} [\cite{DLPV1}]
\label{def:molecule}
Let $1 < s< \infty$ , $\lambda > n ({s}-1)$. A measurable function $M$ on $\Rn$ is called a 
		$(s,\lambda)$ molecule (for $\hone(\Rn)$) if there exists a ball $B = B(x_0,r)$  such that
		\begin{itemize}	      
		\item[\textnormal{M1.}] $\|M\|_{L^s(B)} \leq \, r^{n\left(\frac{1}{s}-1\right)}$
			\item[\textnormal{M2.}] $\ds \| M \, |\cdot - x_0|^{\frac{\lambda}{s}} \,  \|_{L^{s}(B^c)} \leq \, r^{\frac{\lambda}s + n \left(\frac{1}{s}-1\right)}$
			\item[\textnormal{M3.}] $\ds  \left| \int M \right| \leq [\ln(1+r^{-1})]^{-1}.$
		\end{itemize}
	\end{definition}

\begin{proposition}[\cite{DLPV1}]
There exists a constant $C_{n,s,\lambda}$ such that $\|M\|_\hone \leq C_{n,s,\lambda}$ every $(s,\lambda)$ molecule $M$.  

Conversely, for any $f\in \hone(\R^n)$, there exist a sequence $\lb \lambda_j\rb\in \ell^1$ and a sequence of $h^1$ molecules $\lb M_j\rb$ such that 
$$f=\sum_{j=1}^{\infty} \lambda_j M_j \quad \text{in }\hone(\R^n).$$
Moreover,
$$\|f\|_{\hone} \approx \inf \sum |\lambda_j| < \infty,$$
where the infimum is taken over all such decompositions.
\end{proposition}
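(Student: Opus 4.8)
The plan is to prove the two halves separately: the substantive one is that every $(s,\lambda)$ molecule has uniformly bounded $\hone$ norm, which I would obtain by decomposing a molecule into approximate atoms; the existence of molecular decompositions then follows almost for free from this together with the atomic decomposition of Proposition~\ref{prop:h^1}(4). For that converse direction, first observe that a Goldberg $\hone$ atom $a$ supported in a cube $Q$ is, up to a fixed constant depending only on $n$ and $s$, an $(s,\lambda)$ molecule associated to any ball $B\supset Q$ with $r(B)\approx \ell(Q)$: M1 holds since $\|a\|_{L^s}\le\|a\|_\Linfty|Q|^{1/s}\le|Q|^{1/s-1}\lesssim r(B)^{n(1/s-1)}$; M2 is vacuous because $a$ vanishes off $B$; and for M3 either $|Q|<1$, so $\int a=0$, or $|Q|\ge1$, so $r(B)\gtrsim1$ keeps $[\ln(1+r(B)^{-1})]^{-1}$ bounded below by a positive dimensional constant while $|\int a|\le\|a\|_\Linfty|Q|\le1$. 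Hence, given $f\in\hone$, an atomic decomposition $f=\sum_j\lambda_ja_j$ with $\sum_j|\lambda_j|\lesssim\|f\|_\hone$ — which converges in $\hone$, since by the first half $\|a_j\|_\hone\lesssim1$ makes its partial sums Cauchy in $\hone$ — is simultaneously a molecular decomposition $f=\sum_j\mu_jM_j$ (with $\mu_j$ equal to $\lambda_j$ times the fixed constant and $M_j$ the corresponding molecule) converging in $\hone$, with $\sum_j|\mu_j|\lesssim\|f\|_\hone$. Together with the bound $\|f\|_\hone\le\sum_j|\mu_j|\,\|M_j\|_\hone\le C_{n,s,\lambda}\sum_j|\mu_j|$, valid for every molecular decomposition by the first half, this gives $\|f\|_\hone\approx\inf\sum_j|\mu_j|$.

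For the first half, fix an $(s,\lambda)$ molecule $M$ with defining ball $B=B(x_0,r)$ and write $M=\sum_{k\ge0}M_k$, where $M_0=M\chi_{2B}$ and $M_k=M\chi_{A_k}$ with $A_k=2^{k+1}B\setminus2^kB$ for $k\ge1$; set $B_k=2^{k+1}B$, so that $\supp M_k\subset B_k$ and $|B_k|\approx(2^kr)^n$. Using M1, M2 and the lower bound $|x-x_0|\ge2^kr$ on $A_k$, together with H\"older's inequality, one obtains for $k\ge1$
$$\|M_k\|_{L^s}\lesssim 2^{-k\lambda/s}r^{n(1/s-1)}\lesssim 2^{-k\varepsilon}|B_k|^{1/s-1},\qquad |c_k|:=\Big|\int M_k\Big|\le|B_k|^{1-1/s}\|M_k\|_{L^s}\lesssim2^{-k\varepsilon},$$
where $\varepsilon:=(\lambda-n(s-1))/s>0$ by the hypothesis $\lambda>n(s-1)$, and likewise $\|M_0\|_{L^s}\lesssim r^{n(1/s-1)}\approx|B_0|^{1/s-1}$. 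Setting $\chi_k:=|B_k|^{-1}\chi_{B_k}$ and $N_k:=\sum_{j\ge k}c_j$, so that $|N_k|\lesssim2^{-k\varepsilon}$ and $N_0=\int M$, Abel summation yields the identity (all series converging absolutely in $L^1$)
$$M=\sum_{k\ge0}\big(M_k-c_k\chi_k\big)+N_0\chi_0+\sum_{k\ge1}N_k\big(\chi_k-\chi_{k-1}\big).$$
Every summand of the first and third sums has mean zero and support in $B_k$, so — normalizing the above $L^s$ bounds and $\|\chi_k-\chi_{k-1}\|_{L^s}\lesssim|B_k|^{1/s-1}$ by $|B_k|^{1/s-1}$ — it equals a multiple of a $1$-approximate $(1,s)$ atom with coefficient $\lesssim2^{-k\varepsilon}$, the vanishing integral making the approximate cancellation condition of Definition~\ref{def:R-atom} automatic regardless of the radius. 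The single remaining term $N_0\chi_0=(\int M)\chi_0$ is supported in $2B$, satisfies $\|N_0\chi_0\|_{L^s}\le|N_0|\,|B_0|^{1/s-1}\lesssim|B_0|^{1/s-1}$, and by M3 obeys $|\int N_0\chi_0|=|\int M|\le[\ln(1+r^{-1})]^{-1}\le[\ln(1+(2r)^{-1})]^{-1}$, so it too is a bounded multiple of a $1$-approximate $(1,s)$ atom. Summing the coefficients and invoking Proposition~\ref{prop:h^1}(5) with $R=1$, $q=s$, we get $\|M\|_\hone\lesssim1+\sum_{k\ge0}2^{-k\varepsilon}\lesssim1$, with constant depending only on $n$, $s$, $\lambda$.

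I expect the bookkeeping in the first half to be the only real obstacle: the pieces $M_k$ supported near $B$ do not themselves satisfy the $R=1$ approximate cancellation condition, and the purpose of the Abel-summation regrouping is to collect all the ``non-cancelling mass'' of $M$ into the single term $N_0\chi_0$, whose integral is exactly $\int M$ and is therefore controlled by M3; the positivity of $\varepsilon$, i.e.\ the hypothesis $\lambda>n(s-1)$, is precisely what makes the resulting geometric series of atomic coefficients summable. Note that no global vanishing-moment (homogeneous) argument is needed, since for balls of radius $\gtrsim1$ the approximate cancellation condition is free.
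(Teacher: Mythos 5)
Your proof is correct and follows the standard route for this result (the one used in \cite{DLPV1} and its predecessors): reduce the converse to observing that Goldberg atoms are uniformly bounded multiples of molecules, and prove the direct half by splitting the molecule over dyadic annuli and performing the Abel-summation regrouping so that all the uncancelled mass is pushed into the single term $(\int M)\chi_0$, which M3 then controls exactly via the $1$-approximate cancellation condition of Definition~\ref{def:R-atom}; the hypothesis $\lambda > n(s-1)$ gives the positive exponent $\varepsilon$ that makes the coefficient series geometric. No gaps — the paper itself does not reprove this statement but cites \cite{DLPV1}, and your argument is essentially the one there.
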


The following proposition (case $p = 1$ of \cite[Proposition 1]{DLPV2}) shows that the cancellation condition on atoms in Definition~\ref{def:R-atom} is not only sufficient but also necessary. 
\begin{proposition}[\cite{DLPV2}]
 \label{prop:meanestimate}
Suppose $g\in \hone(\Rn)$ with $\supp(g)\subset B(x_0,r)$. Then
$$\bigg|\int g \bigg| \lesssim \frac{\|g\|_{\hone}}{\log(1+r^{-1})}.$$
\end{proposition}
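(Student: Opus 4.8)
The plan is to use the grand maximal function characterization, condition~(2) of Proposition~\ref{prop:h^1} with $T=1$, so that $\|g\|_{\hone}=\|\cM(g)\|_{L^1(\Rn)}$. The idea is that a function supported in $B(x_0,r)$ with nonzero integral forces $\cM(g)(x)$ to be of size roughly $|x-x_0|^{-n}\,|\int g|$ throughout the annulus $\{2r\le|x-x_0|\le 1/5\}$, and integrating this lower bound over the annulus produces a factor $\log(1/r)$.

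First I would dispose of the case where $r$ is not small: fix a threshold $\rho_0\in(0,1/10]$ to be specified. If $r\ge\rho_0$, then $\log(1+r^{-1})$ is bounded above by an absolute constant while $|\int g|\le\|g\|_{L^1(\Rn)}\lesssim\|g\|_{\hone}$ by the standard embedding $\hone(\Rn)\hookrightarrow L^1(\Rn)$, so the inequality is immediate. Assume henceforth $r\le\rho_0$, so that $\{2r\le|x-x_0|\le 1/5\}$ is a nonempty annulus.

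The main step is the construction of admissible test functions. Fix once and for all $\eta\in C^\infty(\Rn)$ with $0\le\eta\le1$, $\eta\equiv1$ on $B(0,1/2)$, $\supp\eta\subset B(0,1)$, and set $c_0:=(1+\|\nabla\eta\|_{L^\infty})^{-1}$. For each $x$ with $2r\le|x-x_0|\le 1/5$ put $t=t(x):=3(|x-x_0|+r)$; then $t\approx|x-x_0|$, $0<t<1$, and $B(x_0,r)\subset B(x,t/2)$. The function $\phi_x(y):=c_0\,|B(x,t)|^{-1}\,\eta\big((y-x)/t\big)$ lies in $C^1(\Rn)$ with $\supp\phi_x\subset B(x,t)$, $\|\phi_x\|_{L^\infty}\le|B(x,t)|^{-1}$ and $\|\nabla\phi_x\|_{L^\infty}\le[t\,|B(x,t)|]^{-1}$, so it is admissible for $\cM$ at $x$; since $\phi_x\equiv c_0\,|B(x,t)|^{-1}$ on $B(x,t/2)\supset\supp g$,
$$\cM(g)(x)\ \ge\ |\langle g,\phi_x\rangle|\ =\ c_0\,|B(x,t)|^{-1}\Big|\int g\Big|\ \gtrsim\ |x-x_0|^{-n}\Big|\int g\Big|,$$
using $|B(x,t)|\approx t^n\approx|x-x_0|^n$ on the annulus. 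Integrating in $x$,
$$\|g\|_{\hone}\ =\ \|\cM(g)\|_{L^1(\Rn)}\ \ge\ \int_{2r\le|x-x_0|\le 1/5}\cM(g)(x)\,dx\ \gtrsim\ \Big|\int g\Big|\int_{2r\le|x-x_0|\le 1/5}|x-x_0|^{-n}\,dx\ \approx\ \Big|\int g\Big|\,\log\frac{1}{10r},$$
and choosing $\rho_0$ small enough that $\log\frac{1}{10r}\ge\tfrac12\log(1+r^{-1})$ for all $r\le\rho_0$ completes the argument.

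I expect the only real obstacle to be the bookkeeping in the middle step: one must check that the localized bumps $\phi_x$ genuinely lie in the admissible class with its precise normalization (the $C^1$ and gradient bounds, which is exactly why the fixed factor $c_0$ is inserted) while remaining constant on a ball containing $\supp g$, and one must keep each radius $t(x)$ below the cutoff $1$ — this last requirement is precisely what forces the annulus to be truncated at a fixed radius $<1$, producing $\log(1+r^{-1})$ rather than an unbounded factor; in the global $\Hone$ setting the same computation would integrate $|x-x_0|^{-n}$ out to infinity and recover the exact vanishing moment $\int g=0$. A slicker but less self-contained alternative uses $\hone$–$\bmo$ duality: pair $g$ against $\sgn\!\big(\int g\big)$ times the truncated logarithm $x\mapsto\max\!\big(0,\min(\log r^{-1},\log|x-x_0|^{-1})\big)$, which belongs to $\bmo(\Rn)$ with norm $\lesssim1$ and equals $\log r^{-1}$ on $\supp g$, yielding the same bound.
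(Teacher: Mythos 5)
Your argument is correct, and I have no substantive objections: the case split at a fixed threshold $\rho_0$, the admissibility of the bumps $\phi_x$ (the factor $c_0=(1+\|\nabla\eta\|_{L^\infty})^{-1}$ does enforce both the sup and gradient normalizations), the inclusion $B(x_0,r)\subset B(x,t/2)$ with $t=3(|x-x_0|+r)<1$, and the logarithmic lower bound from integrating $|x-x_0|^{-n}$ over the truncated annulus all check out. The paper itself does not prove this proposition but imports it from \cite{DLPV2}, and your argument (lower-bounding the grand maximal function by test functions that are constant on $\supp(g)$, or equivalently the dual pairing with a truncated logarithm, which is exactly how the sharpness of the $\log$ factor is seen) is the standard proof of that result, so nothing further is needed.
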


\subsection{Nonhomogeneous BMO}
Goldberg \cite[Corollary1]{Goldberg2} identifies the dual of $\hone(\Rn)$ with the space $\bmo(\Rn)$, a nonhomogeneous version of the John-Nirenberg space $\BMO(\Rn)$ of functions of bounded mean oscillation.  See Bourdaud \cite{Bourdaud} for an exposition of the properties of $\bmo(\Rn)$. In the following definition, we use the notation introduced in \cite{DY}, where it is shown that the definition is independent of the choice of scale used to distinguish large and small balls. Note that unlike $\BMO(\Rn)$, we do not need to take the space modulo constants in order to get a Banach space.
\begin{definition}
Let $b \in \Loneloc(\Rn)$. We say $b\in \bmo(\Rn)$ if 
\begin{align*}
\|b\|_\bmo:= \sup_{B}\fint_{B}|b(x)-c_B|dx <\infty,
\end{align*}
where the supremum is taken over all balls, and for each ball $B$ we set
\begin{equation}
\label{eqn: cB}
c_B:=\begin{dcases} b_B:= \fint_{B}b, & \quad \text{ if }r(B)<1, \\ 0,  & \quad \text{ if }r(B)\geq1. \end{dcases}
\end{equation}
\end{definition}
Since there will only be one $\bmo$ function under discussion at any point in time, we will continue to use the notation $c_B$ defined in \eqref{eqn: cB} in the rest of the paper.
We can then state a corresponding version of the John-Nirenberg inequality  for $b \in \bmo(\Rn)$  (see \cite[Theorem 3.1]{DY}):
 there exist $C,c>0$ such that for any ball $B$, 
$$
|\lb x\in B: |b(x)-c_B|>\lambda\rb |\leq C |B| e^{-\lambda c/\|b\|_{\bmo}}
$$
for all $\lambda>0$.
As consequence, one gets that  
\begin{equation}
\label{eqn:bmop}
\|b\|_\bmo \approx \|b\|_\bmop : =  \lbk \sup_{B} \fint_{B}|b(x)-c_B|^pdx \rbk^{1/p}
\end{equation}
for $1 \leq p < \infty$, with constants depending on $p$.  

This gives us the following useful inequality (see the proof of \cite[Lemma 2.4]{CRW} for the case $\delta = 1$ and $b \in \BMO(\Rn)$).
\begin{lemma}
Let $b\in \bmo(\Rn)$. Then for any $\delta>0$, $p\geq 1$, there exists $C_{n,p,\delta}>0$, independent  of $b$, $x_0$ and $r$, such that  
\begin{equation}
\label{eqn:bintegral}
r^{\delta}\int_{|x-x_0|>r} \frac{|b(x)-c_{B(x_0,r)}|^{p}}{|x-x_0|^{n+\delta}}\leq C_{n,p,\delta}\|b\|_{\bmo}^{p}.
\end{equation}
\end{lemma}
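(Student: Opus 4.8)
The plan is to decompose the region $\{|x-x_0|>r\}$ into dyadic annuli and estimate each piece separately, reducing everything to the John--Nirenberg consequence \eqref{eqn:bmop} and a telescoping estimate. Write $B=B(x_0,r)$ and $B_k=B(x_0,2^kr)$, and set $A_k=\{x: 2^kr<|x-x_0|\le 2^{k+1}r\}$ for $k\ge 0$, so that $\{|x-x_0|>r\}=\bigcup_{k\ge0}A_k$ and $A_k\subset B_{k+1}$. On $A_k$ we have $|x-x_0|^{n+\delta}\ge (2^kr)^{n+\delta}$, hence
$$
\int_{A_k}\frac{|b(x)-c_B|^p}{|x-x_0|^{n+\delta}}\,dx \;\le\; (2^kr)^{-(n+\delta)}\int_{B_{k+1}}|b(x)-c_B|^p\,dx,
$$
so it suffices to control $\int_{B_{k+1}}|b-c_B|^p$.

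First I would split $|b-c_B|^p\lesssim_p |b-b_{B_{k+1}}|^p+|b_{B_{k+1}}-c_B|^p$, where $b_{B_{k+1}}$ is the genuine average. For the first term, note the elementary fact that for any ball $B'$ one has $|b_{B'}-c_{B'}|\le \fint_{B'}|b-c_{B'}|\le\|b\|_\bmo$, so oscillation about the true average is comparable to oscillation about $c_{B'}$; combined with \eqref{eqn:bmop} this gives $\int_{B_{k+1}}|b-b_{B_{k+1}}|^p\lesssim_{n,p}|B_{k+1}|\,\|b\|_\bmo^p$. For the second term I would telescope: for every $i\ge0$, irrespective of the radii,
$$
|b_{B_{i+1}}-b_{B_i}|\;\le\;\frac{|B_{i+1}|}{|B_i|}\,\fint_{B_{i+1}}|b-b_{B_{i+1}}|\;\le\;2^{n+1}\|b\|_\bmo ,
$$
using $\fint_{B'}|b-b_{B'}|\le 2\|b\|_\bmo$; summing over $i$ gives $|b_{B_{k+1}}-b_B|\lesssim_n (k+1)\|b\|_\bmo$. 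Finally $|b_B-c_B|$ is either $0$ (if $r<1$) or $|b_B|\le\|b\|_\bmo$ (if $r\ge1$), so in all cases $|b_{B_{k+1}}-c_B|\lesssim_n (k+1)\|b\|_\bmo$.

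Putting these together yields $\int_{B_{k+1}}|b-c_B|^p\lesssim_{n,p}(k+1)^p|B_{k+1}|\,\|b\|_\bmo^p\approx_{n,p}(k+1)^p(2^kr)^n\|b\|_\bmo^p$, whence the $A_k$-contribution is $\lesssim_{n,p}(k+1)^p\,2^{-k\delta}\,r^{-\delta}\,\|b\|_\bmo^p$. Summing the convergent series $\sum_{k\ge0}(k+1)^p2^{-k\delta}=:C_{p,\delta}<\infty$ and multiplying by $r^\delta$ gives \eqref{eqn:bintegral}.

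The only genuine subtlety, and the one the nonhomogeneous setting adds to the classical $\BMO$ argument (cf.\ \cite[Lemma 2.4]{CRW}), is that $c_B$ depends on whether $r(B)<1$, so one cannot simply telescope the constants $c_{B_k}$, since $c_{B_k}$ jumps from $b_{B_k}$ to $0$ as $2^kr$ crosses $1$. The fix is exactly the step above: telescope with the genuine averages $b_{B_k}$ throughout and absorb $|b_{B_k}-c_{B_k}|$, which is harmless because it is always at most $\|b\|_\bmo$; one just checks that the resulting polynomial factor $(k+1)^p$ is dominated by the geometric gain $2^{-k\delta}$, which holds for every $\delta>0$.
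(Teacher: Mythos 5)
Your proof is correct and follows essentially the same route as the paper: dyadic annuli, the John--Nirenberg bound \eqref{eqn:bmop} on each annulus, and a telescoping estimate on the averages giving a factor $(k+1)^p$ that is absorbed by the geometric decay $2^{-k\delta}$. The only cosmetic difference is that the paper telescopes the constants $c_{B_j}$ directly (using \eqref{eqn:doubling} and the fact that they vanish once $2^jr\geq 1$, giving the slightly sharper factor $\min\{k+1,\log_+ r^{-1}\}^p$), whereas you telescope the genuine averages $b_{B_j}$ and absorb $|b_{B_j}-c_{B_j}|\leq\|b\|_{\bmo}$; both handle the small/large-ball dichotomy equally well.
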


\begin{proof}
By translation invariance of $\bmo(\Rn)$, we may assume $x_0 = 0$.  Denote $B(0, 2^kr)$ by $B_k$, $k = 0, 1, 2,\ldots$.  Then using \eqref{eqn:bmop} and the fact that for $2^j r < 1$
\begin{equation}
\label{eqn:doubling} |c_{B_j} - c_{B_{j-1}}| =
 |b_{B_j} - b_{B_{j-1}}| \leq 2^n \|b\|_\bmo
\end{equation}
and $c_B \lesssim \frac{1}{\log(1+r^{-1})}$ (see  \cite[Lemma 6.1]{DY}), we have
\begin{align*}
r^{\delta}\int_{\Rn \setminus B_0} \frac{|b(x)-c_{B_0}|^{p}}{|x|^{n+\delta}}
& \lesssim \sum_{k = 0}^\infty 2^{-k\delta} |B_k|^{-1}\int_{B_{k+1} \setminus B_k}|b(x)-c_{B_{k+1}}|^{p} + |c_{B_{k+1}} - c_{B_0}|^p dx\\
& \lesssim\sum_{k = 0}^\infty 2^{-k\delta}  \|b\|^p_{\bmo} (1 + (\min\{k+1, \log_+ r^{-1}\})^p) dx\\
& \lesssim \|b\|_{\bmo}^p.
\end{align*}
\end{proof}

While $\bmo$ is the dual of the local Hardy space $\hone$, it does not make sense to call it ``local BMO" as it is actually strictly smaller.  It is more appropriate to give that name to the space consisting of locally integrable functions $b$ for which 
$$\Vert b\Vert_{\BMOloc(\Rn)} :=\sup_{r(B)< R}\fint_{B}|b(x)-b_B|dx<\infty,$$
where $R$ is some fixed constant (see \cite{TangL2}).  This space is strictly larger than $\BMO(\Rn)$ since it contains all Lipschitz functions, for example, and is independent of the choice of $R < \infty$.  Taking those elements of $\BMOloc$ for which $\displaystyle{\sup_{r(B) \geq R} \fint |b| < \infty}$ gives $\bmo(\Rn)$ (in \cite{Bourdaud} it is shown  that it suffices to take $\displaystyle{\sup_{r(B) = R} \fint |b| < \infty}$).  In particular $\BMOloc(\Rn) \cap L^p(\Rn) \subset \bmo(\Rn)$ for $1 \leq p < \infty$.

Vanishing mean oscillation refers to the property that
\begin{equation}
\label{eqn: VMO}
\lim_{R \ra 0}\sup_{r(B)< R}\fint_{B}|b(x)-b_B|dx = 0.
\end{equation}
For $b \in \BMO(\Rn)$, this holds if and only if $b \in \VMO(\Rn)$, the closure of the uniformly continuous functions in $\BMO(\Rn)$, as shown by Sarason \cite{Sarason1} for $n = 1$.  Analogously, we can denote by $\vmo(\Rn)$ the space of functions in $\bmo(\Rn)$ which have vanishing mean oscillation, or alternatively the closure of the bounded uniformly continuous functions in $\bmo(\Rn)$.  Finally, we use $\cmo(\Rn)$ to denote the closure of the continuous (or smooth) functions with compact support in $\bmo(\Rn)$.  More about these spaces can be found in \cite{Bourdaud} and \cite{dafnilocalvmo} (where the notation $\vmo$ is used for $\cmo$).  It was shown there that $\hone(\Rn)$ can be identified with the dual of $\cmo(\Rn)$ and that $b \in \cmo(\Rn)$ if and only if \eqref{eqn: VMO} holds and in addition
\begin{equation}
\label{eqn: cmo}
\lim_{R \ra \infty}\sup_{\substack{|B|\geq 1 \\ B\subset (B(0,R))^c  }} \fint_{B}|b| =0 .
\end{equation}

One can specify the rate at which the oscillation vanishes as the radius goes to zero in \eqref{eqn: VMO} (see \cite[Chapter 5]{Sarason1}, \cite{ShiTorchinsky}).  A special class are those functions of {\em logarithmic mean oscillation}, usually denoted $\LMO$ (not to be confused with {\em bounded lower oscillation} denoted by $\BLO$).  Bounded functions in this class have been identified as the pointwise multipliers of $H^1$ and $\BMO$ on the circle \cite{Stegenga}, on the sphere \cite{Li} and on spaces of homogeneous type \cite{ChangLi}.  The analogous result in the  case of $\bmo$ was shown in \cite{BF}.  These spaces have also proved useful in the study of PDE.  In \cite{TangL} a logarithmic mean oscillation condition is imposed on the coefficients of a parabolic equation, while in  \cite{Bernicot1, Bernicot2} a range of such conditions is imposed on the initial vorticity in the Euler and Navier-Stokes equations.  Note that the latter articles use slightly different notation (for example $\bmo$ there refers to ``local BMO" rather than Goldberg's nonhomogeneous space) and an $L^2$ oscillation, which is equivalent to the $L^1$ one by the John-Nirenberg inequality for these spaces (see \cite{ShiTorchinsky}).  In \cite{HungKy}, a space called $\LMO_\infty$ is introduced which consists of functions in $\BMOloc$ which have logarithmic vanishing mean oscillation on small balls while the mean oscillation on large balls can grow as a power of the radius.  This space is contained in  $\LMOloc(\Rn)$, defined as follows.

\begin{definition}
Let $b\in \Loneloc(\Rn)$. We say $b \in \LMOloc(\Rn)$ if 
$$\Vert b\Vert_{\LMOloc(\Rn)} := \sup_{r(B)<1}\frac{[\log(1+r(B)^{-1})]}{|B|}\int_{B}|b(x)-b_B|dx<\infty.$$
We also define $\lmo(\Rn):=\LMOloc(\Rn)\cap \bmo(\Rn)$ and
$$\Vert b\Vert_{\lmo(\Rn)} := \Vert b\Vert_{\LMOloc(\Rn)} + \sup_{r(B)\geq1}|b|_B<\infty.$$
\end{definition}

Since $[\log(1+r(B)^{-1})]^{-1} \ra 0$ as $r \ra 0$, we have that functions in $\LMOloc(\Rn)$ satisfy the vanishing mean oscillation condition \eqref{eqn: VMO} and therefore $\lmo(\Rn) \subset \vmo(\Rn)$.  If we take $b \in \LMOloc(\Rn) \cap L^p(\Rn)$, $1\leq p < \infty$, then $b$ will also satisfy condition \eqref{eqn: cmo} since  if $|B|\geq 1$ and  $B\subset \Rn \setminus B(0,R)\subset \Rn$, 
then
$\fint_B |b| \leq \|b\|_{L^p(B(0,R)^c)} \ra 0$ as $R \ra \infty$.  Thus $\LMOloc(\Rn) \cap L^p(\Rn) \subset \cmo(\Rn)$.

\section{Localized convolution kernels} 
\label{sec:LCK}
In addition to the characterizations via maximal functions and atomic decomposition, as in Proposition~\ref{prop:h^1}, Goldberg also characterized $\hone(\Rn)$  by singular integral operators in an analogous way to the characterization of $H^1(\Rn)$ via the Riesz transforms $R_j$, $j = 1, \ldots, n$, given by their multipliers as
$\widehat{R_j f}(\xi) = i\frac{\xi_j}{|\xi|}\fhat(\xi)$.

\begin{theorem}{\cite[Theorem 2]{Goldberg2}}
\label{thm:riesz}
Fix a function $\varphi \in \cS(\Rn)$ with $\varphi \equiv 1$ in a neighborhood of the origin.  Then
a distribution $f$ belongs to $\hone(\Rn)$ if and only if $f \in \Lone(\Rn)$ and $r_j f \in \Lone(\Rn)$, $j = 1, \ldots, n$, where
$r_j$ is defined by 
$$\widehat{r_j f}(\xi) = (1 - \varphi(\xi))i\frac{\xi_j}{|\xi|}\fhat(\xi).$$
\end{theorem}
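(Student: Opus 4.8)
The plan is to split the multiplier $(1-\varphi(\xi))\, i\xi_j/|\xi|$ into a ``homogeneous'' piece that reproduces the classical Riesz transform behaviour away from the origin and a ``harmless'' piece coming from $\varphi$, and then to use the atomic characterization (Proposition~\ref{prop:h^1}(4) or (5)) together with the molecular machinery already recalled in the excerpt. First I would establish the easy direction: if $f \in \hone(\Rn)$, then $f \in \Lone(\Rn)$ automatically, and I must show $r_j f \in \Lone$. Write $r_j = R_j - R_j^\varphi$, where $R_j^\varphi$ has multiplier $\varphi(\xi)\, i\xi_j/|\xi|$. Since $\varphi \in \cS$ vanishes to infinite order nowhere but the factor $i\xi_j/|\xi|$ is bounded and smooth away from $0$, the multiplier $\varphi(\xi)\, i\xi_j/|\xi|$ is itself a bounded function that is $C^\infty$ except possibly at the origin, and its inverse Fourier transform is an integrable kernel with good decay (one checks it lies in $\Lone$ by a standard stationary-phase / integration-by-parts argument, since the only singularity is the mild $|\xi|^{-1}$-type one at the origin which, after multiplication by the Schwartz cutoff, produces a kernel bounded by $C(1+|x|)^{-n-1}$). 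Hence $R_j^\varphi$ maps $\Lone \to \Lone$ and it suffices to handle $R_j$. For that I use the atomic decomposition of $\hone$: it is enough to show $\|R_j a\|_{\Lone} \lesssim 1$ uniformly over $\hone$ atoms $a$, since $\Lone$ is complete. For an atom $a$ supported in $B = B(x_0,r)$ with $r \geq 1$, $a$ has no cancellation but $\|a\|_{\Lone}\leq C$, and one controls $R_j a$ near $B$ by $\Ltwo$ boundedness of $R_j$ and Cauchy–Schwarz, and far from $B$ using the size $\|a\|_{\Lone}\le C$ against the kernel decay $|K_j(x-y)-K_j(x-x_0)|\lesssim r/|x-x_0|^{n+1}$ — wait, without cancellation one instead uses that $\int_{|x-x_0|>2r}|K_j(x-x_0)|\,dx$ is only log-divergent, which is exactly where the weak cancellation $|\int a|\lesssim[\log(1+r^{-1})]^{-1}$ of the $R$-approximate atoms (Definition~\ref{def:R-atom}) saves the day when $r$ is small, and when $r\ge 1$ the argument is cruder but still fine. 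This shows $R_j a$ is (a bounded multiple of) an $\hone$ molecule in the sense of Definition~\ref{def:molecule}, hence lies in $\hone \subset \Lone$ with bounded norm.

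For the converse — the substantive direction — suppose $f \in \Lone(\Rn)$ with $r_j f \in \Lone(\Rn)$ for $j = 1,\dots,n$; I must show $f \in \hone(\Rn)$, i.e.\ that some maximal function of $f$ is integrable. The strategy mirrors the classical proof that $f, R_1 f,\dots,R_n f \in \Lone$ implies $f \in \Hone$ via harmonic extensions and the subharmonicity of $|F|^{(n-1)/n}$ for the Riesz system $F = (u, u_1,\dots,u_n)$ — here $u$ is the Poisson extension of $f$ and $u_j$ the Poisson extension of $R_j f$. The new feature is that we only have $r_j f = R_j f - R_j^\varphi f$ in $\Lone$, not $R_j f$ itself; but $R_j^\varphi f \in \Lone$ as soon as $f \in \Lone$ (by the kernel estimate above), so in fact $R_j f \in \Lone$ too. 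Thus one recovers that the full Riesz system of $f$ is in $\Lone$. Now the classical argument shows $f$ is in $\Hone$ or differs from an $\Hone$ function in a controlled way; since $\Hone(\Rn)\subset\hone(\Rn)$, and more carefully: the conclusion one actually wants is the local statement, which is weaker, so it is enough to bound the \emph{local} grand maximal function $\fM f$ (Proposition~\ref{prop:h^1}(3)). One way to finish cleanly is to note that $f \in \Lone$ with $R_j f \in \Lone$ gives, by the Fefferman–Stein theory, $f \in \Hone(\Rn)$ when $\int f$ interacts correctly — but in general $f\in\Lone$ need not have vanishing integral, yet $\hone$ does not require it, so one writes $f = g + c\,\eta$ with $\eta$ a fixed Schwartz bump, $c = \int f$, and $\int g = 0$; then $g$ has the same Riesz transforms in $\Lone$ up to $\Lone$ errors, hence $g \in \Hone \subset \hone$, and $\eta \in \cS \subset \hone$, giving $f \in \hone$.

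The main obstacle, and the step that requires genuine care rather than citation, is controlling the difference between the \emph{local} Riesz transform $r_j$ and the \emph{global} one $R_j$ at the level of the maximal-function (converse) direction, and doing so \emph{quantitatively} so that the passage from ``$r_j f \in \Lone$'' to ``$R_j f \in \Lone$'' does not secretly require information we do not have. Concretely, I expect the bulk of the work to be: (i) the Fourier-analytic lemma that the kernel of $R_j^\varphi$ (multiplier $\varphi(\xi)i\xi_j/|\xi|$) is in $\Lone(\Rn)$ — routine but needs the explicit integration-by-parts estimate using that $\varphi$ is Schwartz and $\varphi\equiv1$ near $0$ so that $\varphi(\xi)i\xi_j/|\xi| - i\xi_j/|\xi|\cdot(\text{smooth near }0)$ is handled; and (ii) adapting the Burkholder–Gundy–Silverstein / Fefferman–Stein subharmonicity argument, or alternatively invoking Goldberg's own maximal characterization and the atomic decomposition, to localize the scale to $0 < t < 1$. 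Since (i) makes $R_j$ and $r_j$ interchangeable modulo bounded operators on $\Lone$, and since $\Hone \subset \hone$ with the local maximal function dominated by the global one, the theorem then follows by combining the classical $\Hone$ Riesz characterization with the splitting $f = g + c\eta$ above. I would present (i) as a short lemma, cite the classical $\Hone$ result of Fefferman–Stein for the core of the converse, and spend the remaining effort on the atomic/molecular estimates for the forward direction and on the bookkeeping of the constant $c = \int f$.
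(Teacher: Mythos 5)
Note first that the paper does not prove Theorem~\ref{thm:riesz}; it is quoted from Goldberg \cite{Goldberg2}, so there is no in-paper proof to compare against, and your proposal must be assessed on its own terms. The step it rests on --- that the operator $R_j^\varphi$ with multiplier $\varphi(\xi)\,i\xi_j/|\xi|$ has an integrable convolution kernel, hence is bounded on $L^1(\Rn)$, so that $r_j = R_j - R_j^\varphi$ and $R_j$ are interchangeable modulo $L^1$-bounded errors --- is false. The multiplier $\varphi(\xi)\,i\xi_j/|\xi|$ is discontinuous at $\xi = 0$ (since $\varphi(0) = 1$ while $i\xi_j/|\xi|$ is homogeneous of degree $0$ and has no limit at the origin), whereas the Fourier transform of any $L^1$ kernel is continuous by Riemann--Lebesgue. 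Concretely, the kernel of $R_j^\varphi$ is $\check{\varphi} * K_j$ (with $K_j(x) = c_n x_j/|x|^{n+1}$), and for large $|x|$ this behaves like $(\int\check{\varphi})\,K_j(x) = K_j(x)$, of size $|x|^{-n}$, which is not integrable. This error undermines both directions. In the forward direction, $R_j$ does not map $\hone$ into $L^1$ at all: for a Goldberg atom $a$ supported in a ball of radius $\geq 1$ with $\int a \neq 0$, $R_j a(x)$ behaves like $(\int a)K_j(x)$ away from the support and is not integrable, so reducing $\|r_j a\|_{L^1}$ to $\|R_j a\|_{L^1}$ cannot work. In the converse direction, the upgrade from ``$r_j f\in L^1$'' to ``$R_j f\in L^1$'' fails for the same reason; and even if it held, the patch $f = g + c\eta$ would not give $R_j g\in L^1$, because $R_j\eta$ for a Schwartz bump $\eta$ with $\int\eta \neq 0$ again decays only like $|x|^{-n}$. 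Conceptually, if $r_j - R_j$ were $L^1$-bounded then the $r_j$ and $R_j$ characterizations would describe the same space, contradicting $\Hone(\Rn)\subsetneq \hone(\Rn)$.

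The salvageable part of the idea is already codified in the paper: by Theorem~\ref{Mainthm1} (applied to $K = K_j$, $\psi = \check{\varphi}$, and a compactly supported Lipschitz cutoff $\eta$ with $\eta(0)=1$), the operator $r_j = T^\psi$ differs from the \emph{truncated} Riesz transform $T_\eta$ with kernel $K_j\eta$ by an operator with $L^1$ kernel --- not from the full $R_j$. The forward direction should therefore be run against $T_\eta$, whose kernel is compactly supported and $\delta$-Lipschitz away from the origin, via the atomic decomposition with approximate cancellation of Definition~\ref{def:R-atom}; the converse requires a genuinely local argument (Goldberg's truncated maximal function, or the molecular characterization underlying Proposition~\ref{prop:h^1}), not passage through the global Fefferman--Stein $\Hone$ theory.
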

See \cite{Pelsec} for a generalization of this result to the case $0 < p \leq 1$, using $\varphi$ of compact support.

For $n = 1$, in \cite{Goldberg1}, Goldberg also claims a characterization of $\hone(\R)$ via a localized version of the Hilbert transform.  This can also be described as a ``truncated" Hilbert transform (not to be confused with the usual truncation used to the defined the principal value integral).  We set
\begin{equation} 
\label{Hloc}
\cHloc(f)(x):= p.v. \int_{\R} \frac{\eta(x-y)f(y)}{x-y}dy,
\end{equation}
where $\eta \equiv 1$ on $(-1,1)$ and is supported in $[-2,2]$.  In \cite{DL} it was shown that for smooth $\eta$, the local Hilbert transform $\cHloc$ characterizes $\hone(\R)$.  

Note that the two localizations are different: $\cHloc$ is obtained by localizing the kernel of the Hilbert transform near $0$, while $r_j$ is obtained by localizing the Fourier transform of $R_j$ away from $0$.  This may cause some confusion.  For example, in order to characterize weighted local Hardy spaces, higher-dimensional analogues of the local Hilbert transforms, namely ``truncated" versions of the Riesz transforms where the kernel is multiplied by a smooth function of compacted support, were defined in \cite[Section 8]{TangL2} using the notation $R_j$.  However, the result referred to as  \cite[Theorem A]{TangL2}, which is \cite[Corollary to Theorem 3.1]{Bui}, actually uses the same $r_j$ as in Theorem~\ref{thm:riesz}, localized on the Fourier transform side.

In this section we consider these two types of localization for a more general convolution kernel, with minimal smoothness and decay conditions on the auxiliary functions used for the localization, and prove that the boundedness of the associated singular integral operators on $\hone(\Rn)$ does not depend on which type of localization we choose.  

\subsection{Calder\'on--Zygmund singular integral operators of convolution type}
We assume that our kernel is $\delta$-kernel, namely
 $K$ is a measurable function on $\Rn \setminus\lb 0\rb$ and there exist $\delta\in (0,1]$ and $C>0$ such that
\begin{equation}
\label{eqn:Kdecay}
 |K(x)|\leq \frac{C}{|x|^{n}}
\end{equation}
and
\begin{equation}
\label{eqn:Ksmoothness}
|K(x-y)-K(x)|\leq C\frac{|y|^{\delta}}{|x|^{n+\delta}} \quad \mbox{ for } 2|y|\leq |x|.
\end{equation}
We also assume that a cancellation condition holds in the following form:  
\begin{equation}
\label{eqn:Kcancellation}
\exists A  < \infty \quad \sup_{0<r<R}\bigg|\int_{r<|x|<R}K(x)dx\bigg| \leq A.
\end{equation}
We then define the associated convolution operator $T: \cS(\Rn) \ra \cS'(\Rn)$ by the principal value integral
$$Tf(x) = \lim_{\varepsilon \ra 0} \int_{|x- y| \geq \varepsilon} K(x - y) f(y) dy.$$
This extends to a bounded operator on $L^2(\Rn)$ (see \cite[Section VII.3]{SteinHA}).
Furthermore,  $T$ is bounded on $L^p(\Rn)$, $1<p<\infty$, and from $L^1(\Rn)$ to $L^{1,\infty}(\Rn)$. See \cite[Chapter 2]{stein_SIDPF}  for the classical theory of Calder\'on--Zygmund operators of convolution type and \cite[Chapter 4]{Grafakosbook2} for more general kernels.  A general Calder\'on--Zygmund operator can be written as a sum of an operator of this form plus an operator given by multiplication by a bounded function (see \cite[Section I.7.4]{SteinHA},\cite[Proposition 4.1.11]{Grafakosbook2}), but since multiplication by a bounded function is typically not a bounded operator on $\hone$, here we only consider the singular integral part.

We define two types of operators  via localizations of the kernel $K$.  On the one hand, we look at operators of the form
$$T^{\psi}(f):= T(f -\psi\ast f)$$
for suitable functions $\psi$.  These are modelled on Goldberg's localized Riesz transforms in Theorem~\ref{thm:riesz}, with $\psi = \check{\varphi}$.  

On the other, based on the definition of the local Hilbert transform \eqref{Hloc}, we look at operators $T_{\eta}$, associated to the kernel $K\eta$, for a class of functions $\eta$.  

\subsection{The Operator $T_{\eta}$}
We now show that under certain weak smoothness and decay conditions on $\eta$, the kernel $K\eta$ satisfies the same conditions as $K$ and so we can associate to it an operator $T_\eta$ in the same way that we associate $T$ to $K$, and this operator enjoys the same boundedness properties as $T$.
 
\begin{lemma} 
\label{Ketastd}
Suppose that $\eta$ is a bounded function and there exists a constant $C$ for which
\begin{equation} 
\label{Lipeta}
|\eta(x-y)-\eta(x)|\leq C\frac{|y|^{\delta}}{|x|^{\delta}} \quad \mbox{ for } 0 <2|y|\leq |x|.
\end{equation}
Then $K\eta$ satisfies conditions \eqref{eqn:Kdecay} and \eqref{eqn:Ksmoothness}.
\end{lemma}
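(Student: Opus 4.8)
The plan is to verify the two required conditions for $K\eta$ separately, the decay being immediate and the smoothness following from a standard ``add and subtract'' splitting. For \eqref{eqn:Kdecay}, since $\eta$ is bounded we simply estimate
$$|K(x)\eta(x)| \leq \|\eta\|_{\Linfty}\,|K(x)| \leq C\|\eta\|_{\Linfty}\,|x|^{-n},$$
so $K\eta$ satisfies \eqref{eqn:Kdecay} with the constant $C\|\eta\|_{\Linfty}$.

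For \eqref{eqn:Ksmoothness}, I would fix $x,y$ with $0<2|y|\leq|x|$ (so in particular $x\neq 0$ and $x-y\neq 0$, and all the products appearing below are defined) and write
$$K(x-y)\eta(x-y)-K(x)\eta(x) = \big(K(x-y)-K(x)\big)\eta(x-y) + K(x)\big(\eta(x-y)-\eta(x)\big).$$
The first term I would bound using the smoothness \eqref{eqn:Ksmoothness} of $K$ together with the boundedness of $\eta$, giving $C\|\eta\|_{\Linfty}\,|y|^{\delta}/|x|^{n+\delta}$. The second term I would bound using the decay \eqref{eqn:Kdecay} of $K$ together with the hypothesis \eqref{Lipeta} on $\eta$, giving $C\,|x|^{-n}\cdot |y|^{\delta}/|x|^{\delta} = C\,|y|^{\delta}/|x|^{n+\delta}$. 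Summing the two estimates yields \eqref{eqn:Ksmoothness} for $K\eta$ with a constant depending only on $C$, $\delta$ and $\|\eta\|_{\Linfty}$.

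There is no genuine obstacle here; the only point requiring a moment of care is that the hypotheses on $K$ and on $\eta$ are stated only in the regime $2|y|\leq|x|$, which is precisely the regime in which \eqref{eqn:Ksmoothness} is required, so each term in the splitting can be controlled by the corresponding bound in that same regime. It is worth noting that the boundedness of $\eta$ is used (and cannot be dropped) for the first term, while the precise form \eqref{Lipeta} of the oscillation bound on $\eta$ --- with the factor $|x|^{-\delta}$ rather than a plain Lipschitz bound --- is exactly what is needed to absorb the $|x|^{-n}$ coming from the decay of $K$ in the second term.
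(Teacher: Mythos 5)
Your proof is correct and follows essentially the same approach as the paper: the decay bound is immediate from boundedness of $\eta$, and the smoothness bound is obtained by the standard add-and-subtract splitting. The only cosmetic difference is that the paper inserts the intermediate term $K(x-y)\eta(x)$ (so that the first summand involves $K(x-y)$, bounded via $|x-y|\geq|x|/2$), whereas you insert $K(x)\eta(x-y)$, which lets you use the decay of $K$ at $x$ directly; both variants give the same estimate with the same ingredients.
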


\begin{proof}
We have that  $|K\eta|\leq \|\eta\|_\Linfty|x|^{-n}$,
and for $0 <2|y|\leq |x|$, since this implies $|x - y| \geq |x|/2$, we get
\begin{align*}
&|K(x-y)\eta(x-y)-K(x)\eta(x)|\\
&\leq |K(x-y)||\eta(x-y)-\eta(x)|+|K(x-y)-K(x)||\eta(x)|\\
&\lesssim \frac{1}{|x-y|^n} \frac{|y|^{\delta}}{|x|^{\delta}}+\frac{|y|^{\delta}}{|x|^{n+\delta}} \\
&\lesssim \frac{|y|^{\delta}}{|x|^{n+\delta}}.
\end{align*}
\end{proof}

The condition on $\eta$ is a local $\delta$-Lipschitz condition such that the Lipschitz constant in $B(x,|x|/2)$ decays like $|x|^{-\delta}$.   In particular, if $\eta$ has compact support, then we just require $\eta \in \Lip_\delta$.  

Next we show that $K\eta$ satisfies \eqref{eqn:Kcancellation}. For this we need further assumptions on $\eta$ guaranteeing that $\eta$ has a certain decay at infinity and $\eta - 1$ vanishes sufficiently fast at the origin.  Note that we do not require that $\eta \equiv 1$ in a neighborhood of the origin.  For example, if $\eta \in \Lip_\delta$ then it suffices that $\eta(0) = 1$.

\begin{lemma} 
\label{T1bddL2}
If $\eta$ satisfies,
\begin{equation} 
\label{decayeta}
\int_{0<|x|<1} \frac{|\eta(x)-1|}{|x|^n}dx +\int_{|x| \geq 1}\frac{|\eta(x)|}{|x|^n}dx < \infty,
\end{equation}
then \eqref{eqn:Kcancellation} holds for $K\eta$.
\end{lemma}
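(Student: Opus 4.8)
The plan is to write $\eta = 1 + (\eta-1)$ and play off the cancellation \eqref{eqn:Kcancellation} of $K$ against the two integrability hypotheses in \eqref{decayeta}. The one point that needs care is that $\eta-1$ need not decay at infinity — only $\eta$ does — so one cannot subtract the constant $1$ on the whole annulus $\{r<|x|<R\}$ and still have an absolutely convergent remainder. Instead I would split the annulus at the radius $|x|=1$, which is exactly the threshold appearing in \eqref{decayeta}.

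Fix $0<r<R$ and set $\rho:=\min(R,1)\le 1$. On the inner piece $\{r<|x|<\rho\}$ (empty if $r\ge 1$) write
$$\int_{r<|x|<\rho} K(x)\eta(x)\,dx = \int_{r<|x|<\rho} K(x)\,dx + \int_{r<|x|<\rho} K(x)\big(\eta(x)-1\big)\,dx .$$
Both integrals on the right are finite, since $|K(x)|\le C|x|^{-n}$ is bounded on $\{r\le|x|\le 1\}$. The first is at most $A$ in absolute value by \eqref{eqn:Kcancellation}, and the second is bounded, using \eqref{eqn:Kdecay}, by $C\int_{0<|x|<1}|x|^{-n}|\eta(x)-1|\,dx$, which is finite by the first term in \eqref{decayeta}.

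On the outer piece $\{\max(r,1)\le|x|<R\}$ (empty if $R\le 1$) no cancellation is needed: by \eqref{eqn:Kdecay},
$$\left|\int_{\max(r,1)\le|x|<R} K(x)\eta(x)\,dx\right| \le C\int_{|x|\ge 1} \frac{|\eta(x)|}{|x|^n}\,dx ,$$
which is finite by the second term in \eqref{decayeta}. Adding the two pieces, and checking the degenerate cases $R\le 1$ and $r\ge 1$ separately (each covered by just one of the two estimates), gives
$$\sup_{0<r<R}\left|\int_{r<|x|<R} K(x)\eta(x)\,dx\right| \le A + C\int_{0<|x|<1}\frac{|\eta(x)-1|}{|x|^n}\,dx + C\int_{|x|\ge 1}\frac{|\eta(x)|}{|x|^n}\,dx < \infty ,$$
which is \eqref{eqn:Kcancellation} for $K\eta$.

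The only genuine obstacle is the asymmetry already flagged: $\eta$ is controlled near the origin through $\eta-1$ and near infinity through $\eta$ itself, and it is this mismatch that forces the split at $|x|=1$; once that is in place the argument is just the triangle inequality together with the size bound \eqref{eqn:Kdecay}. The remaining work is the minor bookkeeping of where $r$ and $R$ sit relative to $1$.
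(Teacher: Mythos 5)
Your proof is correct and follows essentially the same route as the paper's: split the annulus at $|x|=1$, write $\eta = 1 + (\eta-1)$ on the inner piece so that the cancellation condition \eqref{eqn:Kcancellation} handles the constant $1$ while the decay bound \eqref{eqn:Kdecay} and the first term of \eqref{decayeta} handle $\eta-1$, and control the outer piece directly from \eqref{eqn:Kdecay} and the second term of \eqref{decayeta}. The explicit case-by-case bookkeeping with $\rho=\min(R,1)$ is slightly more verbose than the paper's, which simply writes the three-term bound at once, but the substance is identical.
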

\begin{proof}
For $0 < r < R$, by the assumptions on $K$ and $\eta$,
\begin{align*}
&\bigg| \int_{r<|y|<R} K(y)\eta(y)dy\bigg|\\
&\leq \bigg| \int_{r<|y|<1} K(y)[\eta(y)-1]dy\bigg| + A +\bigg| \int_{1\leq |y|<R} K(y)\eta(y)dy\bigg| \\
&\lesssim  \int_{0<|y|<1} \frac{|\eta(y)-1|}{|y|^n}dy+  \int_{|y| \geq 1} \frac{|\eta(y)|}{|y|^n}dy + 1
\leq A',
\end{align*}
where the constant $A'$ is independent of $r$ and $R$.
\end{proof}

If $\eta$ satisfies the hypotheses of both lemmas, we get the boundedness of $T_{\eta}$ on $L^2(\Rn)$.  Once we have $L^2$-boundedness,  we can show that $T_{\eta}$ is bounded on $L^p(\Rn)$ for $1<p<\infty$ and from $L^1(\Rn)$ to weak-$L^1(\Rn)$.

\subsection{Equivalent boundedness of $T_{\eta}$ and $T^{\psi}$}
For nice choices of $\psi$, one can study the boundedness of the localized operator $T^{\psi}$ as a pseudo-differential operator.  Alternatively, one can show that the kernel of $T^{\psi}$ is sufficiently good.  In particular, assuming that $\psi\in L^1$,
 $T^{\psi}$ inherits from $T$ the boundedness on $L^p$ for $1<p<\infty$ and the weak $(1,1)$ boundedness. We are interested in the boundedness on $\hone$.

Instead of working with $T^\psi$ directly, we give sufficient conditions on $\psi$ to be able to obtain the boundedness of $T^\psi$ on $\hone$ from the boundedness of $T_\eta$.  As a consequence of the following equivalence, it suffices to prove the boundedness for one good $\eta$ (e.g. a $\Lip_\delta$ function of compact support) in order to obtain it for the whole class of $\eta$ and $\psi$.

\begin{theorem}
\label{Mainthm1}
Suppose $\eta$ satisfies the hypotheses of Lemmas~\ref{Ketastd} and \ref{T1bddL2},  and $\psi$ satisfies
$$\psi\in L^1(\Rn) \cap L^2(\Rn), \quad 
\int \psi = 1,$$
\begin{align}
\label{eqn:decaypsi}
\int_{|x|\geq 1}\lsk \frac{1}{|x|^n}\int_{|y|\geq |x|/2}|\psi(y)|dy + \frac{1}{|x|^{n+\delta}}\int_{|y|\leq |x|/2} |y|^{\delta}|\psi(y)| dy \rsk dx < \infty
\end{align}
and
\begin{align}
\label{eqn:smoothpsi}
\int_{|x|\geq 1}\lsk \int_{|y|\leq |x|/2}\frac{|\psi(x-y)-\psi(x)|}{|y|^n} dy\rsk dx < \infty.
\end{align}
Then the operator $T_{\eta}$ is bounded on $h^1(\Rn)$ if and only if $T^{\psi}$ is bounded on $h^1(\Rn)$.
\end{theorem}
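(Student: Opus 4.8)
\noindent\emph{Proof plan.} The plan is to show that $T^\psi$ and $T_\eta$ differ by convolution with a fixed $L^1(\Rn)$ function, and to combine this with the fact that convolution with an $L^1$ function is bounded on $\hone(\Rn)$. The latter follows at once from the maximal characterization~(1) of Proposition~\ref{prop:h^1}: for $g\in L^1(\Rn)$ and $f\in\hone(\Rn)$ the relevant maximal function of $g*f$ is dominated pointwise by $|g|$ convolved with that of $f$, so $\|g*f\|_{\hone}\lesssim\|g\|_{L^1}\|f\|_{\hone}$. Since $T$ is convolution with the principal value of $K$ and commutes with convolutions, $T^\psi f=Tf-\psi*Tf$ is convolution with the distribution $K-K*\psi$, where $K*\psi:=\psi*K=T\psi$ lies in $L^2(\Rn)$ by the $L^2$-boundedness of $T$ (using $\psi\in L^2(\Rn)$), and $T_\eta$ is convolution with $K\eta$. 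Hence $T^\psi-T_\eta$ is convolution with
\[
L:=K(1-\eta)-K*\psi=\bigl(K-K*\psi\bigr)-K\eta,
\]
and everything reduces to proving $L\in L^1(\Rn)$.

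On $B(0,1)$ this is routine: $\int_{|x|<1}|K(x)(1-\eta(x))|\,dx<\infty$ by $|K|\le C|x|^{-n}$ and the first integral in \eqref{decayeta}, while $K*\psi=T\psi\in L^2(\Rn)\subset\Loneloc(\Rn)$, and $K\eta$ is locally integrable off any neighbourhood of $0$. Away from the origin, $\int_{|x|\ge1}|K(x)\eta(x)|\,dx<\infty$ by the second integral in \eqref{decayeta}, so the substance of the proof is the bound
\[
\int_{|x|\ge1}\bigl|K(x)-(K*\psi)(x)\bigr|\,dx<\infty.\qquad(\star)
\]
To establish $(\star)$ I would, for $|x|\ge1$, change variables to write $(K*\psi)(x)=\lim_{\varepsilon\to0}\int_{|y|>\varepsilon}K(y)\psi(x-y)\,dy$, split the integral at $|y|=|x|/2$, and on the inner part subtract $\psi(x)$ to bring in the cancellation \eqref{eqn:Kcancellation} of $K$; writing also $K(x)=\int K(x)\psi(x-y)\,dy$ (valid because $\int\psi=1$) and splitting the same way, the leading piece cancels the non-integrable tail $K(x)\sim|x|^{-n}$, leaving finitely many terms. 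Integrated over $|x|\ge1$, the terms carrying $K(x)$ or $K(x)-K(y)$ on $|y|\ge|x|/2$ are controlled by the two integrands of \eqref{eqn:decaypsi} (using the smoothness estimate \eqref{eqn:Ksmoothness} when $2|y|\le|x|$ and the crude bound $|K(x)|+|K(y)|\lesssim|x|^{-n}$ otherwise); the term $\int_{|y|<|x|/2}K(y)\bigl[\psi(x-y)-\psi(x)\bigr]\,dy$ is controlled, via $|K(y)|\le C|y|^{-n}$, by \eqref{eqn:smoothpsi}; and the term $\psi(x)\lim_{\varepsilon\to0}\int_{\varepsilon<|y|<|x|/2}K$ is dominated pointwise by $A|\psi(x)|$, whose integral over $|x|\ge1$ is at most $A\|\psi\|_{L^1}$. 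This gives $(\star)$, hence $L\in L^1(\Rn)$.

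Since $L\in L^1(\Rn)$ and $L$ is integrable near the origin, the principal-value identities collapse to absolutely convergent integrals, so $T^\psi f-T_\eta f=L*f$ for every $f\in\cS(\Rn)$, which is dense in $\hone(\Rn)$; hence $T^\psi=T_\eta+(L*\,\cdot)$ and $T_\eta=T^\psi-(L*\,\cdot)$, and since $g\mapsto L*g$ is bounded on $\hone(\Rn)$ the two operators are bounded on $\hone(\Rn)$ simultaneously. The main obstacle is $(\star)$: one has to handle the non-absolutely-convergent (principal value) nature of $K*\psi$, and above all to extract the cancellation coming from $\int\psi=1$ that removes the non-integrable tail $K(x)\sim|x|^{-n}$, the hypotheses \eqref{eqn:decaypsi} and \eqref{eqn:smoothpsi} being tailored exactly so that the remaining terms are integrable at infinity. (Alternatively, one may first note that for any two admissible auxiliary functions $\eta_1,\eta_2$ the difference $T_{\eta_1}-T_{\eta_2}$ is convolution with the $L^1(\Rn)$ function $K(\eta_1-\eta_2)$ --- integrable near $0$ because $\eta_i-1$ vanishes fast enough there and at infinity because $\eta_i$ decays --- so it suffices to treat one convenient $\eta$, say a $\Lip_\delta$ function of compact support with $\eta\equiv1$ near the origin, and then carry out the comparison with $T^\psi$ as above.)
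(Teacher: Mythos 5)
Your proposal is correct and follows essentially the same route as the paper: in both arguments the heart of the matter is showing that $T_{\eta}-T^{\psi}$ is convolution with an $L^1(\Rn)$ kernel, handled near the origin by \eqref{decayeta} together with the $L^2$-boundedness of $T$ applied to $\psi$, and at infinity by exactly your decomposition (insert $\int\psi=1$, split at $|x|/2$, and use the smoothness, decay and cancellation of $K$ against \eqref{eqn:decaypsi}, \eqref{eqn:smoothpsi} and $\|\psi\|_{L^1}$). The paper makes the principal-value issue rigorous by working with the truncations uniformly in $\varepsilon$, i.e.\ by bounding the maximal kernel $K_*(x)=\sup_{\varepsilon}|(K\eta)_\varepsilon-K_\varepsilon+K_\varepsilon\ast\psi|(x)$ in $L^1$ (using the maximal truncated operator $T_*$ for the local part) and then passing to the limit by dominated convergence; your ``collapse to absolutely convergent integrals'' should be fleshed out along those lines, since $K(1-\eta)$ and $K\ast\psi$ are not separately integrable at infinity. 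The one point where you genuinely diverge is the final step: the paper deduces $\hone$-boundedness of convolution with an $L^1$ kernel from Goldberg's localized Riesz transform characterization (the $r_j$ commute with the convolution), whereas you use the pointwise domination $\Mpsi(g\ast f)\leq |g|\ast\Mpsi(f)$; both are valid, and yours is arguably the more direct.
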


Note that condition \eqref{eqn:decaypsi} will hold, for example, if  the function $|x|^\eta \psi(x)$ is integrable for some $\eta > 0$, while \eqref{eqn:smoothpsi} will hold if  $\psi \in \Lip_\alpha$ for $0 < \alpha \leq 1$ and $|x|^\alpha \|\psi\|_{\Lip_\alpha(B(x,|x|/2))}$ is integrable.

\begin{proof}
We will show that the operator  $T_E:=T_{\eta}-T^{\psi}$ is bounded on $L^1(\Rn)$, and from that deduce that $T_E$ is bounded on $h^1(\Rn)$.

For $0 < \varepsilon < 1/2$, let $K_\varepsilon(x) = \chi_{|x| > \varepsilon} K(x)$ be the usual truncation of the kernel.  Applying the same to $K\eta$, we consider  
$(K\eta)_\varepsilon -K_\varepsilon+K_\varepsilon\ast\psi$.
Set
$$K_*(x) = \sup_{\varepsilon > 0} |(K\eta)_\varepsilon(x) -K_\varepsilon(x)+K_\varepsilon\ast\psi(x)|.$$
We claim that $K_* \in L^1(\Rn)$.

For the local estimate, we write 
$$K_*(x) \leq   |K(x)(\eta(x) - 1)| +  T_*(\psi)(x),$$
 where $T_*(\psi)(x) = \sup_{\varepsilon > 0}|K_\varepsilon\ast\psi(x)|$.  Recall that the maximal operator $T_*$ has the same boundedness as $T$ (see \cite[Section I.7]{SteinHA},\cite[Section 4.2.2]{Grafakosbook2}).  Thus we have 
$$
\int_{|x| < 1} K_*(x) dx \lesssim \int_{|x| < 1} \frac{|\eta(x) - 1|}{|x|^n}dx +  \|\psi\|_{L^2} 
< \infty
$$

For $|x| \geq 1$, we can write
$$K_*(x) \leq  |K\eta(x)| + \sup_{\varepsilon > 0} |K(x)- K_\varepsilon\ast\psi(x)|.$$
The first term is integrable for $|x| \geq 1$ by condition \eqref{decayeta} on $\eta$, so it remains to bound the integral of the second term.  We do this by fixing $\varepsilon$ and obtaining a pointwise estimate on $|K(x)- K_\varepsilon\ast\psi(x)|$ in terms of $\psi$.

Using the hypotheses on $\psi$, we can write, for $|x| \geq 1$ and $\varepsilon < 1/2$,
\begin{align}
\nonumber
\bigg|K(x) - \int_{|x-y|>\varepsilon} K(x)\psi(y)dy\bigg| 
\nonumber
&\leq |K(x)|\int_{|x-y|\leq\varepsilon} |\psi(y)|dy\\
\label{eqn:Kpsi}
&\lesssim |x|^{-n}\int_{|y| \geq |x|/2} |\psi(y)|dy.
\end{align}
Thus it remains to consider
\begin{align*}
&\int_{|x-y|>\varepsilon} [K(x)-K(x-y)]\psi(y)dy \\
&= \int_{\bigg\lb \substack{|y|\leq |x|/2\\ |x-y|\geq |x|/2}\bigg\rb} 
+\int_{\bigg\lb \substack{|y|\geq |x|/2\\ |x-y|\geq |x|/2}\bigg\rb} 
+\int_{\varepsilon<|x-y|\leq |x|/2} [K(x)-K(x-y)]\psi(y)dy\\
& =:I_1+I_2+I_3.
\end{align*}

To estimate $I_1$, note that $B(0,|x|/2) \subset B(x,|x|/2)^c$ so we can use the smoothness condition \eqref{eqn:Ksmoothness} on $K$ to write 
\begin{align*}
|I_1|&\leq \int_{|y|\leq |x|/2} |K(x)-K(x-y)||\psi(y)|dy \\
&\lesssim  \frac{1}{|x|^{n+\delta}}\int_{|y|\leq |x|/2} |y|^{\delta}|\psi(y)| dy.
\end{align*}
For $I_2$, we use the decay bound \eqref{eqn:Kdecay} on $K$  to get, as in \eqref{eqn:Kpsi},
\begin{align*}
|I_2|&\leq\int_{\bigg\lb \substack{|y|\geq |x|/2\\ |x-y|\geq |x|/2}\bigg\rb} (|K(x)||\psi(y)|+|K(x-y)||\psi(y)|)dy \\
&\lesssim \frac{1}{|x|^{n}} \int_{|y|\geq |x|/2}|\psi(y)|dy.
\end{align*}
Finally, for $I_3$, we again use the decay bound \eqref{eqn:Kdecay} as well as the cancellation condition \eqref{eqn:Kcancellation}  on $K$ to obtain
\begin{align*}
|I_3|&\leq \bigg|\int_{\varepsilon<|x-y|\leq |x|/2} \lsk K(x)\psi(y)-K(x-y)(\psi(y)-\psi(x))\rsk dy\bigg| \\
&\quad\quad\quad\quad + \bigg|\int_{\varepsilon<|x-y|\leq |x|/2} K(x-y)\psi(x)dy\bigg|\\
&\lesssim \int_{|y|\geq |x|/2} \frac{|\psi(y)|}{|x|^n}dy + \int_{|x-y|\leq |x|/2}   \frac{|\psi(y)-\psi(x)|}{|x-y|^n} dy  + |\psi(x)|\\
&\lesssim \frac{1}{|x|^n}\int_{|y|\geq |x|/2}|\psi(y)|dy +\int_{|y'|\leq |x|/2}\frac{|\psi(x-y')-\psi(x)|}{|y'|^n} dy +   |\psi(x)|.
\end{align*}

Since all these estimates are independent of $\varepsilon$, we can combine \eqref{eqn:Kpsi}, the bounds on $I_1$, $I_2$ and $I_3$, and the integrability conditions \eqref{eqn:decaypsi} and \eqref{eqn:smoothpsi} on $\psi$ to get that
$$\int_{|x| \geq 1}\sup_{\varepsilon > 0} |K(x)- K_\varepsilon\ast\psi(x)|< \infty$$
and as a result, $K_* \in L^1(\Rn)$.
Let 
$$K_E(x) = \lim_{\varepsilon \ra 0} ((K\eta)_\varepsilon -K_\varepsilon+K_\varepsilon\ast\psi)(x) = K\eta(x) -K(x)+ \lim_{\varepsilon \ra 0}K_\varepsilon\ast\psi(x).$$  
This limit exists for almost every $x \in \Rn$ by the properties of $T_*$ (see \cite[p.\ 45]{stein_SIDPF}).  By the Dominated Convergence Theorem, $K_E \in L^1(\Rn)$ and
$$\lim_{\varepsilon \ra 0} ((K\eta)_\varepsilon -K_\varepsilon+K_\varepsilon\ast\psi) \ast f = K_E \ast f \quad \mbox { for } f \in L^1(\Rn).$$

Let $f\in \hone(\Rn)$.  Then
$T_E f \in L^1(\Rn)$ with 
$$\|T_E f\|_{L^1(\Rn)} \leq \|K_E\|_{L^1(\Rn)} \|f\|_{L^1(\Rn)}.$$
Moreover,  the localized Riesz transforms $r_j$, $j = 1,\ldots, n$ of Goldberg (see Theorem~\ref{thm:riesz}) commute with convolution with the $L^1$ function $K_E$, so
$$r_j(T_E(f))=T_E(r_j(f)).$$
Since $r_j(f)\in L^1(\Rn)$,  hence $T_E(r_j(f))\in L^1(\Rn)$, we have
\begin{align*}
\|T_E f\|_{\hone(\Rn)} &\lesssim \|T_E f\|_{L^1(\Rn)} + \sum_{j = 1}^n \|r_j(T_E(f))\|_{L^1(\Rn)}\\
&  \leq \|K_E\|_{L^1(\Rn)}(\|f\|_{L^1(\Rn)} + \sum_{j = 1}^n \|r_j(f)\|_{L^1(\Rn)} )\\
&\lesssim \|f\|_{\hone(\Rn)}.
\end{align*}
\end{proof}

Starting with the kernels of the Riesz transforms $R_j$, $j = 1,\ldots, n$, which satisfy conditions \eqref{eqn:Kdecay} and \eqref{eqn:Ksmoothness} with $\delta = 1$ and \eqref{eqn:Kcancellation} with $A = 0$, and noting that the function $\psi = \check{\varphi}\in \cS(\Rn)$ for $\varphi$ as in Theorem~\ref{thm:riesz} satisfies the hypotheses of Theorem~\ref{Mainthm1}, we can use both theorems to get the following.
\begin{corollary} 
\label{localriesz}
Suppose that $\eta$ and $\psi$ satisfy assumptions in Theorem~\ref{Mainthm1}. For $j = 1,\ldots, n$, define, for $f \in \cS(\Rn)$,
\begin{align*}
\cRjeta f(x)&:= c_n \lim_{\varepsilon \ra 0} \int_{|x - y| \geq \varepsilon} \frac{(x_j-y_j)\eta(x-y)}{|x-y|^{n+1}}f(y)dy \\
\text{and}\\
\cRjpsi f(x)&:= R_j(f - \psi \ast f).
\end{align*}
Then
\begin{enumerate}
\item $\cRjeta$ and $\cRjpsi$ both map $h^1(\Rn)$ to $L^1(\Rn)$;
\item for $f\in L^1(\Rn)$, $f\in h^1(\Rn) \iff \cRjeta(f)\in L^1(\Rn), j=1,...,n \iff \cRjpsi(f)\in L^1(\Rn), j=1,...,n.$
\end{enumerate}
\end{corollary}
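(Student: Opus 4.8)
The plan is to reduce the whole statement to Goldberg's characterization (Theorem~\ref{thm:riesz}) by viewing both $\cRjeta$ and $\cRjpsi$ as perturbations, by operators bounded on $\Lone(\Rn)$, of the single localized Riesz transform $r_j f = R_j(f-\psi_0\ast f)$ with $\psi_0:=\check\varphi$ for $\varphi$ as in Theorem~\ref{thm:riesz}. To set up, observe that $\cRjpsi = T^\psi$ and $\cRjeta = T_\eta$ for $T=R_j$, whose kernel $K_j(x)=c_n x_j|x|^{-n-1}$ satisfies \eqref{eqn:Kdecay} and \eqref{eqn:Ksmoothness} with $\delta=1$ and \eqref{eqn:Kcancellation} with $A=0$ (the integral of $K_j$ over any annulus vanishes by oddness). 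By Lemmas~\ref{Ketastd} and \ref{T1bddL2}, $K_j\eta$ is again such a kernel, so $\cRjeta$ is a genuine Calder\'on--Zygmund operator, bounded on $L^p(\Rn)$ for $1<p<\infty$ and of weak type $(1,1)$, and since $\psi\in\Lone(\Rn)$ the same holds for $\cRjpsi$; one then extends these operators from $\cS(\Rn)$ by density. I would also record that $\psi_0$ itself satisfies the hypotheses imposed on $\psi$ in Theorem~\ref{Mainthm1}: $\psi_0\in\cS(\Rn)\subset L^1\cap L^2$, $\int\psi_0=\varphi(0)=1$, and the rapid decay and Lipschitz bounds of Schwartz functions give \eqref{eqn:decaypsi} and \eqref{eqn:smoothpsi} at once (as in the remark after Theorem~\ref{Mainthm1}); thus $r_j$ is itself one of the operators of type $\cRjpsi$.

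The key step is to apply the computation carried out in the proof of Theorem~\ref{Mainthm1}, with $K=K_j$, to the pair $(\eta,\psi)$ and to the pair $(\eta,\psi_0)$. In each case it produces a kernel in $\Lone(\Rn)$ whose associated convolution operator equals $\cRjeta-\cRjpsi$, respectively $\cRjeta-r_j$. Hence both differences are bounded from $\Lone(\Rn)$ to $\Lone(\Rn)$, and by subtraction so is $\cRjpsi-r_j$; in other words, $\cRjeta$, $\cRjpsi$ and $r_j$ differ pairwise by operators bounded on $\Lone(\Rn)$. Note that Theorem~\ref{Mainthm1} cannot be invoked here as a black box, since it is an equivalence about boundedness \emph{on} $\hone(\Rn)$, whereas the Riesz transforms do not map $\hone$ into $\hone$; what is needed is precisely the intermediate $\Lone$-level fact established inside its proof (the $\Lone$-boundedness of $T_E=T_\eta-T^\psi$).

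For part (1), Theorem~\ref{thm:riesz} together with the accompanying norm equivalence $\|f\|_{\hone}\approx\|f\|_{\Lone}+\sum_{j=1}^n\|r_j f\|_{\Lone}$ (a standard consequence of the closed graph theorem, or of Goldberg's proof) shows that $r_j:\hone(\Rn)\to\Lone(\Rn)$ is bounded. Since $\hone(\Rn)\hookrightarrow\Lone(\Rn)$ continuously and $\cRjeta-r_j$, $\cRjpsi-r_j$ are bounded on $\Lone(\Rn)$, we conclude $\|\cRjeta f\|_{\Lone}\lesssim\|r_j f\|_{\Lone}+\|f\|_{\Lone}\lesssim\|f\|_{\hone}$, and likewise for $\cRjpsi$, which gives part (1).

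For part (2), the forward implications are contained in part (1). For the converse, suppose $f\in\Lone(\Rn)$ and $\cRjeta f\in\Lone(\Rn)$ for $j=1,\dots,n$ (the case of $\cRjpsi$ is identical): then $r_j f=\cRjeta f-(\cRjeta-r_j)f\in\Lone(\Rn)$ for every $j$, because $\cRjeta-r_j$ maps $\Lone$ to $\Lone$, and therefore $f\in\hone(\Rn)$ by Theorem~\ref{thm:riesz}. The only points requiring care are the verification of the hypotheses of Theorem~\ref{Mainthm1} for $\check\varphi$ and of conditions \eqref{eqn:Kdecay}--\eqref{eqn:Kcancellation} for the Riesz kernels, and the bookkeeping that upgrades the $\Lone\to\Lone$ bounds on the error operators to $\hone\to\Lone$ bounds through the embedding $\hone\hookrightarrow\Lone$; I do not expect a genuine obstacle, the substantive work having already been done in Theorems~\ref{thm:riesz} and \ref{Mainthm1}.
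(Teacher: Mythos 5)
Your proposal is correct and follows essentially the same route as the paper, which simply remarks that the Riesz kernels satisfy conditions \eqref{eqn:Kdecay}--\eqref{eqn:Kcancellation} with $\delta=1$, $A=0$, that $\check\varphi$ satisfies the hypotheses of Theorem~\ref{Mainthm1}, and that the corollary then follows from Theorems~\ref{thm:riesz} and~\ref{Mainthm1}. You go one step further in spelling out that part (2) cannot be obtained from the \emph{statement} of Theorem~\ref{Mainthm1} alone (which is an equivalence of $\hone$-boundedness) but requires the $\Lone$-level fact established inside its proof, namely that $T_E=T_\eta-T^\psi$ has an $\Lone$ kernel $K_E$; this is a useful clarification that the paper leaves implicit, and your chain $\cRjeta-r_j$, $\cRjeta-\cRjpsi$, hence $\cRjpsi-r_j$ all bounded on $\Lone$, together with Theorem~\ref{thm:riesz}, is exactly the argument needed.
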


As seen above, because convolution operators commute with the $r_j$, the boundedness from $\hone(\Rn)$ to $L^1(\Rn)$ implies the boundedness on $\hone(\Rn)$, and extra cancellation conditions on the kernel are not needed (see also the remark following the proof of \cite[Theorem 4]{Goldberg2}).  

\subsection{Inhomogeneous singular integral operators}
We end this section by introducing a class of operators which are not necessarily of convolution type.  As can be seen from the definition, however, for a convolution $\delta$-kernel $K$ and an $\eta$ which decays sufficiently fast at infinity, the localized kernel $K\eta$ will be in this class.

Let $\Delta:=\lb (x,x): x\in \Rn\rb$ denote the diagnoal in $\R^{2n}$.

\begin{definition}
\label{def:inhom}
Let $\delta \in (0,1]$.  We say $T$ is an he inhomogeneous singular integral operator associated with  an inhomogeneous $\delta$-kernel $K$ if $K$ is a locally integrable function on $\R^{2n}\setminus\Delta$ and there exist $\varepsilon>0$ and $C$ such that 
\begin{enumerate}
\item $|K(x,y)|\leq C\min\lb |x-y|^{-n}, |x-y|^{-n-\varepsilon}\rb$ for all $(x,y)\in \R^{2n}\setminus \Delta$;
\item $|K(x,y)-K(x,z)|+|K(y,x)-K(z,x)|\leq C |y-z|^{\delta}|x-y|^{-n-\delta}$ for $2|y-z|\leq |x-y|$.
\end{enumerate}
In addition, we assume that the operator $T$, defined by 
$Tf(x):=\int_{\Rn}K(x,y)f(y)dy$
for $f \in \Cinftyc(\Rn)$ and $x \not\in \supp(f)$, can be extended to a bounded operator on $L^2(\Rn)$.
\end{definition}

This type of operator was introduced in \cite{DHZ} and a generalization  was considered in \cite{DLPV1}, where it was shown that if certain cancellation conditions are imposed on $T$,  then it is bounded from $h^p(\Rn)$ to $h^p(\Rn)$, $0 < p \leq 1$.  If no cancellation conditions are imposed, we still have boundedness from $\hone(\Rn)$ to $L^1(\Rn)$, as can be seen by noting that $\hone$ atoms are mapped to ``pre-molecules" which are bounded in $L^1(\Rn)$ (see \cite[Definition 3]{DLPV2}).
 As $T$ is a singular integral operator, $T$ is also bounded from $L^p(\Rn)$ to itself for $1<p<\infty$.  We will consider the commutators of these operators with $\bmo$ functions in Section~\ref{sec:commutators}.
 
\section{Local Hardy spaces related to commutators with bmo functions}
\label{sec:spaces}
In this section we introduce some ``local" analogues of commutator maximal function defined in \cite{Ky}, and the atoms in \cite{Perez}, and consider the relationships between them.

\subsection{Some Maximal Functions}
\label{sec:maximal}
Recall that the Hardy-Littlewood Maximal function defined for $f \in \Loneloc(\Rn)$ by
$$Mf(x):=\sup_{B\ni x}|f|_{B}$$
 is bounded on $L^p(\Rn)$ for $1<p\leq\infty$ and also bounded from $L^1(\Rn)$ to $L^{1,\infty}(\Rn)$. 

In connection with commutators, the following two maximal functions were introduced in \cite[Section 6]{CRW} in the one-dimensional case.
Setting
$$M(b,f)(x):=\sup_{I\ni x} \big|(b(x) - b_I)f_{I}\big|,$$
it was shown in \cite[Theorem IX]{CRW} that $b \in \BMO(\R)$ if and only if $f \mapsto M(b,f)$ is bounded on $L^2(\R)$.
The proof uses  \cite[Theorem X]{CRW} which states that for a different maximal function,
$$N(b,f)(x):=\sup_{I\ni x} |(bf)_I - b(x)f_{I}|,$$
$f \mapsto M(b,f)$ is bounded on $L^2(\R)$ when $b \in \BMO(\R)$.

The commutator of $M$ with multiplication by $b$, that is
$$[M,b](f)(x) := M(bf)(x) - b(x)Mf(x),$$
was studied in \cite{MilmanSchonbek, BasteroMilmanRuiz, Amiran} and shown to be bounded on $L^p(\Rn)$ for $1<p<\infty$ if and only if $b\in BMO(\Rn)$ and $b$ is bounded below.
 
Finally, one has the following maximal function,
$$M_b f(x)=\sup_{B\ni x} \frac{1}{|B|}\int_{B}|b(x)-b(y)||f(y)|dy,$$
which was studied in \cite{SegoviaTorrea, GHST, Amiran} and is sometimes denoted by $C_b$.
In this case, as in \cite{CRW}, one has that $M_b$ is bounded on $L^p(\Rn)$ for $1<p<\infty$ if and only if $b\in BMO(\Rn)$. Moreover, for $b\in BMO(\Rn)$, \cite[Corollary 1.11]{Amiran} gives the pointwise domination $M_b f (x) \lesssim \|b\|_\BMO M^2 f(x)$ for $f \in \Loneloc(\Rn)$, and therefore
\begin{equation}
\label{eqn:MbBound}
\|M_b\|_{L^p \mapsto L^p} \lesssim \|b\|_\BMO.
\end{equation}

Ky \cite{Ky} introduced the following commutator maximal function definition of the space  $H^1_b$.
\begin{definition}[\cite{Ky}, Definition 2.2]
\label{def:Ky}
Let $b \in \BMO(\Rn)$ be nontrivial.  The space $\Honeb(\Rn)$ consists of $f \in \Hone(\Rn)$ such 
$[b,\fMnt] f \in \Lone(\Rn)$, where
$[b,\fMnt]f(x): = \fMnt[b(x) f(\cdot) - b(\cdot) f(\cdot)](x)$
and $\fMnt$ is the non-tangential grand maximal functions defined by
$$\fMnt f(x):=\sup\{ |f * \phi_t(y)|: |y - x| < t, \varphi\in \cA\}$$
with
$$\cA = \{\phi\in \cS(\Rn): \|\phi\|_{\infty}  + \|\grad \phi\|_{\infty} \leq (1 + |x|^2)^{-n-1}\}.$$
The norm on $\Honeb$ is given by 
$$\|f\|_{\Honeb}:= \|f\|_\Hone \|b\|_\BMO + \|[b,\fMnt]f\|_{L^1}.$$
\end{definition}
Ky also defines a ``local" version of the non-tangential grand maximal function $\fMnt$ by restricting $t$ to $(0,1)$.  As pointed out following Proposition~\ref{prop:h^1}, this is equivalent to the maximal functions $\cM$ and $\fM$ in terms of characterizing $\hone(\Rn)$.  However, there is no definition of $\honeb$ given in \cite{Ky}. 

We choose the grand maximal function $\cM$, using test functions of compact support, to give the analogue of Ky's definition in the local case.
\begin{definition} 
\label{def:cmbf}
Given $b\in L^2_{loc}(\Rn)$, we define the maximal commutator function for $f\in L^2_{loc}(\Rn)$ to be 
$$
\cMb f(x):= [b,\cM](x): = \cM[b(x) f(\cdot) - b(\cdot) f(\cdot)](x).
$$
\end{definition}
Recall that in Proposition~\ref{prop:h^1} (for $T = 1$) we defined, for $f \in \cS'(\Rn)$,
$$\cM(f)(x):=\sup_{\phi} |\langle f, \phi \rangle|,$$
where the supremum is taken over all $\phi\in C^1(\Rn)$ with $\supp(\phi)\subset B(x,t)$ for some $0<t<1$, $\|\phi\|_{L^{\infty}}\leq |B(x,t)|^{-1}$ and $\|\nabla\phi\|_{L^{\infty}}\leq [t|B(x,t)|]^{-1}$.
We therefore have that for $b, f \in L^2_{loc}(\Rn)$,
$$\cMb f(x)= \sup_{\phi} \big|\langle b(x) f(\cdot) - b(\cdot) f(\cdot), \phi \rangle\big| = \sup_{\phi} \bigg|\int_{\Rn} [b(x)  - b(y)] f(y)\phi(y) dy\bigg|.$$
Since $(b(x) - b)f \in L^1_{loc}(\Rn)$ for almost every $x \in \Rn$,  the integral is well-defined almost everywhere, and by the size and support conditions on $\phi$,
\begin{equation}
\label{eqn:cMb}
\cMb f(x) \lesssim M_b f(x).
\end{equation}
The extension of this definition from $f \in L^2_{loc}(\Rn)$ to general $f \in \hone(\Rn)$, except in the special case where $b$ is bounded, is not obvious.  In fact, this is also the case for Ky's definition, Definition~\ref{def:Ky}, and additional work, such as the sublinear decomposition \cite[Theorem 3.1]{Ky}, is needed there.

\subsection{Atomic commutator Hardy spaces}
In \cite{Perez}, P\'erez defined a restricted notion of atom with cancellation against a function $b\in \BMO$.  
\begin{definition}[\cite{Perez}, Definition 1.3]
\label{def:Perez}
For  $b \in \BMO(\Rn)$, a $b$-atom $a$ such that for some cube $Q$ 
\begin{itemize}
\item[(i)] $\supp(a) \subset Q$; 
\item[(ii)] $\|a\|_\Linfty \leq |Q|^{-1}$;
\item[(iii)] $\int a= 0$ and $\int ab  = 0$.
\end{itemize}
\end{definition}
P\'erez then proceeded to define an atomic space, which he called $\Honeb$ (Ky  \cite{Ky} uses $\cHoneb$ for this space) by taking all $f \in \Lone$ which can be written as $f=\sum_{j=1}^{\infty}\lambda_{j}a_j$, where $a_j$ are $b$-atoms and $\lb \lambda_j\rb\in \ell^1$. Note that since $b$-atoms are also $\Hone$ atoms, such a decomposition will converge in the  $\Hone$ norm with $\|f\|_\Hone \leq \sum |\lambda_j|$, and hence P\'erez's space is contained in $\Hone$.

We give the following straightforward adaptation of P\'erez's definition to the local case.
\begin{definition}
\label{def:localPerez}
Fix $b \in \bmo(\Rn)$. We say $a$ is a P\'erez $\honeb$ atom if for some ball $B(x_0, r)$ we have
\begin{enumerate}
    \item $\supp(a)\subset B(x_0,r)$
    \item $\|a\|_{L^{2}(\Rn)}\leq |B(x_0,r)|^{-\frac12}$
    \item  $\ds \int a=0$ and  $\ds \int ab\; =0$ whenever $r<1$.
\end{enumerate}
For $f \in \Lone(\Rn)$, we say $f\in \honeperb(\Rn)$ if $f$ can be written as $f=\sum_{j=1}^{\infty}\lambda_{j}a_j$, where $a_j$ are P\'erez $\honeb$ atoms and $\lb \lambda_j\rb\in \ell^1$. For such functions, we define
$$\|f\|_{\honeperb}:=
\inf\sum_{j}|\lambda_j|,$$
where the infimum is taken over all such decompositions.
\end{definition}
As in the homogeneous case, we note that a P\'erez $\honeb$ atom is a $(1,2)$ atom in $\hone$ and therefore the decomposition converges in $\hone$ and we have $\|f\|_\hone \leq \sum |\lambda_j|$.  Moreover, each P\'erez $\honeb$ atom supported in a ball $B = B(x_0,r)$ also satisfies that $a(b-c_{B})\in \hone$, where $c_B$ is as in \eqref{eqn: cB}.  Indeed, we have that
$a(b-c_{B})$ is also supported in $B$, has integral zero whenever $r < 1$, and by \eqref{eqn:bmop}
\begin{align}
\nonumber
\|a(b-c_{B})\|_{L^{s}(\Rn)} & \leq \|a\|_{L^{2}(\Rn)} \|b-c_{B}\|_{L^{p}(\Rn)}\\
\label{eqn:abc}
& \leq |B(x_0,r)|^{-\frac12}\|b\|_\bmop |B(x_0,r)|^{\frac1p} = \|b\|_\bmop |B(x_0,r)|^{\frac1s - 1}
\end{align}
for $1\leq s < 2$ and $\frac 12 + \frac 1p= \frac1s$; taking $s> 1$ makes $a(b-c_{B})$ a multiple of a $(1,s)$ atom and we have
$$\|a(b-c_{B})\|_\hone \lesssim \|b\|_\bmop.$$

We now give a variation on this definition which involves approximate cancellation conditions.

\begin{definition} 
\label{def:Approxh1bAtom}
Fix $b \in \bmo(\Rn)$ and let $C_b = \|b\|_{\bmo,2}$ as defined in \eqref{eqn:bmop}.  We say $a$ is an approximate $\honeb$ atom if for some ball $B(x_0, r)$ we have
\begin{enumerate}
    \item $\supp(a)\subset B(x_0,r)$;
    \item $\|a\|_{L^2(\Rn)}\leq |B(x_0,r)|^{-\frac12}$;
  \item $\ds \bigg|\int a\bigg|\leq \frac{1}{[\log(1+r^{-1})]^{2}}$ and  $\ds \bigg|\int a(b-c_B)\bigg|\leq \frac{C_b}{\log(1+r^{-1})}$.
\end{enumerate}
For $f \in \Lone(\Rn)$, we say $f\in \honeatomb(\Rn)$ if $f$ can be written as $f=\sum_{j=1}^{\infty}\lambda_{j}a_j$, where $a_j$ are approximate  $\honeb$ atoms and $\lb \lambda_j\rb\in \ell^1$. For such functions, we define
$$\|f\|_{\honeatomb}:= 
\inf\sum_{j}|\lambda_j|,$$
where the infimum is taken over all such decompositions.
\end{definition}
The choice of  $C_b = \|b\|_{\bmo,2}$ guarantees that, by Cauchy-Schwarz, condition (2) implies condition (3) when $r \geq 1$.  Thus every P\'erez $\honeb$ atom  is an approximate $\honeb$ atom, and
$$\honeperb(\Rn) \subset \honeatomb(\Rn).$$
 Moreover, comparing with 
 Definition~\ref{def:R-atom} for $R = 1$ and $q = 2$, we see, as in \eqref{eqn:abc}, that not only is every approximate $\honeb$ atom $a$ an approximate $(1,2)$ atom (up to a factor of $\log 2$), but also $a(b-c_{B})$ is a multiple of an approximate $(1,s)$ atom for $1< s < 2$ and $\frac 12 + \frac 1p= \frac 1s$, with
\begin{equation}
\label{eqn:honeb-abc}
\|a(b-c_{B})\|_\hone \lesssim \|b\|_\bmop.
\end{equation}

Recall that in $\hone$ (or $h^p$), every atom (or molecule) with approximate cancellation conditions can be written as an infinite linear combination of atoms with exact cancellation - see \cite{DY, DLPV1}. Is it possible to decompose  an approximate $\honeb$ atom into P\'erez $\honeb$ atoms with exact cancellation? 

In the following proposition, we give a partial converse showing that every approximate $\honeb$ atom can be written as a finite sum of atoms of integral zero, but we do not achieve the exact cancellation against $b$, and it is not clear if this can be done.

\begin{proposition}
\label{prop:decomoph1b}
Suppose $A$ is an approximate $\honeb$ atom with respect to the ball $B(x_0, r)$, and $r < 1$.  Then  $\ds A = \sum \lambda_j a_j$ where the sum is {\bf finite}, $a_j$ are approximate $\honeb$ atoms supported in balls $B_j$, respectively, and in addition, when $r(B_j) < 1$,
$$\int a_j = 0.$$
Moreover, for some constant depending only on $n$,
$\ds \sum |\lambda_j| \leq C_{n}$.
\end{proposition}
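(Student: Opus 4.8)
The plan is to repair the vanishing moment of $A$ by the telescoping argument standardly used to convert approximate atoms into exact ones in $\hone$ (compare \cite{DY,DLPV1}), while keeping track of the oscillation against $b$. Set $m=\int A$; if $m=0$ there is nothing to do, as then $A$ is itself an approximate $\honeb$ atom with $\int A=0$, so assume $m\neq0$. Let $N\geq1$ be the least integer with $2^{N}r\geq1$, and put $B_{k}=2^{k}B$, $B=B(x_{0},r)$, so that $r(B_{k})<1$ for $0\leq k\leq N-1$ and $r(B_{N})\geq1$. Telescoping through the normalized indicators $|B_{k}|^{-1}\chi_{B_{k}}$ produces the finite decomposition
\[
A=\Bigl(A-m\tfrac{\chi_{B_{1}}}{|B_{1}|}\Bigr)+\sum_{k=1}^{N-1}m\Bigl(\tfrac{\chi_{B_{k}}}{|B_{k}|}-\tfrac{\chi_{B_{k+1}}}{|B_{k+1}|}\Bigr)+m\tfrac{\chi_{B_{N}}}{|B_{N}|}=:\widetilde a_{0}+\sum_{k=1}^{N-1}\widetilde a_{k}+\widetilde a_{N},
\]
with $\widetilde a_{j}$ supported in $B_{1},\dots,B_{N},B_{N}$ respectively and $\int\widetilde a_{j}=0$ for $j\leq N-1$.

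I would next check that, after division by suitable coefficients, each $\widetilde a_{j}$ is an approximate $\honeb$ atom. The $L^{2}$ sizes are routine from $\|A\|_{L^{2}}\leq|B|^{-1/2}$, from $|m|\leq[\log(1+r^{-1})]^{-2}\leq(\log2)^{-2}$, and from $|B_{k+1}|=2^{n}|B_{k}|$: each $\widetilde a_{j}$ has $L^{2}$ norm at most a dimensional multiple of $(1+|m|)$ (for $j=0$) or $|m|$ (for $j\geq1$) times the reciprocal square root of the measure of its support ball. The first cancellation condition of Definition~\ref{def:Approxh1bAtom}(3) is immediate, since $\int\widetilde a_{j}=0$ for $j\leq N-1$ while $\widetilde a_{N}$ lives on a ball of radius $\geq1$, where that condition is vacuous. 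For the oscillation against $b$ there are two regimes. On a support ball of radius $\geq1$ the $c$-constant is $0$, and, as remarked after Definition~\ref{def:Approxh1bAtom}, Cauchy--Schwarz together with $\fint_{B'}|b|^{2}\lesssim C_{b}^{2}$ (from \eqref{eqn:bmop}) reduces condition (3) to the $L^{2}$ bound. On a support ball $B_{k+1}$ of radius $<1$, using $\int_{B_{j}}(b-b_{B_{j}})=0$ and the oscillation-doubling bound $|b_{B_{k}}-b_{B_{k+1}}|\leq2^{n}\|b\|_{\bmo}\lesssim C_{b}$ of \eqref{eqn:doubling}, one gets $\bigl|\int\widetilde a_{k}(b-c_{B_{k+1}})\bigr|\lesssim C_{b}|m|$; for $\widetilde a_{0}$ one combines the hypothesis $\bigl|\int A(b-c_{B})\bigr|\leq C_{b}/\log(1+r^{-1})$ with $|c_{B}-c_{B_{1}}|\,|m|\lesssim C_{b}[\log(1+r^{-1})]^{-2}\lesssim C_{b}/\log(1+r^{-1})$ to obtain $\bigl|\int\widetilde a_{0}(b-c_{B_{1}})\bigr|\lesssim C_{b}/\log(1+r^{-1})$. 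Since $\log(1+r(B_{k+1})^{-1})\leq\log(1+r^{-1})$, dividing $\widetilde a_{0},\widetilde a_{N}$ by dimensional constants $\lambda_{0},\lambda_{N}$ and $\widetilde a_{k}$ by $\lambda_{k}:=C_{n}|m|\max\{1,\log(1+r(B_{k+1})^{-1})\}$ makes each $a_{j}=\widetilde a_{j}/\lambda_{j}$ an approximate $\honeb$ atom with, in addition, $\int a_{j}=0$ whenever $r(B_{j})<1$.

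The final step is the bound on $\sum_{j}|\lambda_{j}|$. Here $\lambda_{0}$ and $|\lambda_{N}|=|m|\leq(\log2)^{-2}$ are bounded, and $\log(1+r(B_{k+1})^{-1})\leq\log(1+r^{-1})$ gives $\sum_{k=1}^{N-1}|\lambda_{k}|\lesssim|m|\,N\,(1+\log(1+r^{-1}))$ with a dimensional constant. Because $N\lesssim\log(1+r^{-1})$ by the choice of $N$, and $|m|\leq[\log(1+r^{-1})]^{-2}$ with $\log(1+r^{-1})\geq\log2$ for $r<1$, this is $\lesssim[\log(1+r^{-1})]^{-2}(1+\log(1+r^{-1}))^{2}\lesssim1$, so $\sum_{j}|\lambda_{j}|\leq C_{n}$.

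The main obstacle — and the reason the squared logarithm appears in Definition~\ref{def:Approxh1bAtom}(3) — is precisely this summation. Insisting on the cancellation of $a_{j}$ against $b$ (and not just $\int a_{j}=0$, as in the ordinary $\hone$ decomposition of an approximate atom) forces the telescoping coefficients to carry the extra factor $\log(1+r(B_{k+1})^{-1})$; summing over the $N\approx\log(1+r^{-1})$ dyadic scales then costs a factor of order $(\log r^{-1})^{2}$, which is exactly what $|m|\leq[\log(1+r^{-1})]^{-2}$ supplies. A single power of the logarithm in the hypothesis on $|\int A|$ would suffice to achieve $\int a_{j}=0$ but not to simultaneously control the cancellation against $b$.
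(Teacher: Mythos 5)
Your argument is correct and is essentially the paper's own proof: the same telescoping of $\int A$ through normalized indicators of the dyadic dilates $2^kB$ up to the first scale of radius $\geq 1$, the same choice of coefficients $\lambda_k\approx|m|\log(1+r(B_{k})^{-1})$ to absorb the oscillation against $b$ coming from \eqref{eqn:doubling}, and the same final summation over the $\approx\log(1+r^{-1})$ scales, which is exactly what the hypothesis $|\int A|\leq[\log(1+r^{-1})]^{-2}$ is calibrated to control. The only differences are cosmetic (your first piece is normalized over $2B$ rather than $B$, shifting the indexing), and your closing remark on why a single power of the logarithm would not suffice matches the design of Definition~\ref{def:Approxh1bAtom}.
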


\begin{proof}
Set $\alpha = \int A \neq 0$ (otherwise we are done).  Denote $B(0, 2^jr)$ by $B_j$, $j = 0, 1, 2,\ldots, k$, where $2^{k-1}r<1\leq 2^k r$, and let $\eta_j = \frac{\chi_{B_j}}{|B_j|}$ be the normalized characteristic function of $B_j$.  Then
$$
A=A - \alpha\eta_0+\sum_{j=1}^k \alpha (\eta_{j-1} - \eta_j) + \alpha\eta_k = \sum_{i = 0}^{k+1} A_j
$$
where 
\begin{align*}
& A_0:=A - \alpha\eta_0\\
&A_j :=\alpha(\eta_{j-1} - \eta_j), \quad j = 1, \ldots, k, \;\mbox{ and}\\
& A_{k+1}:= \alpha\eta_k.
\end{align*}

First observe that $\supp(A_j)\subset B_j$ for $j=1,...,k$, and $\supp(A_{k+1})\subset B_k$.  Moreover, since $\int \eta_j = 1$ for all $j$, we have that $\int A_j = 0$ for $j=0,...,k$.  

Write $A_j = \lambda_j a_j$ where 
$$a_0:= \frac{1}{2}A_0, \quad \lambda_0 := 2, \quad  a_{k+1}:= \alpha^{-1}A_{k+1}, \quad  \lambda_{k+1}:= \alpha,$$
and
$$a_j :=\frac{A_j}{\alpha 2^{n}\log(1+[2^jr]^{-1})}, \quad \lambda_j := \alpha 2^{n}\log(1+[2^jr]^{-1}), \quad j = 1, \ldots, k.$$
Since $|\alpha| \leq \|A\|_{\Lone} \leq 1$, $\|\eta_j\|_{L^\infty} \leq |B_j|^{-1}$, and $2^{n}\log(1+[2^jr]^{-1}) > 1$ for $j = 1, \ldots, k$,
we have that 
$\|a_0\|_{L^2}\leq |B_0|^{-\frac12}$, $\|a_j\|_{L^{\infty}}\leq |B_j|^{-1}$ for $j=1,...,k$, and $\|a_{k+1}\|_{L^\infty}\leq |B_k|^{-1}$. Thus, all the $a_j$ satisfy Conditions (1) and (2) in Definition~\ref{def:Approxh1bAtom} and in addition $\int a_j = 0$ for $j=0,...,k$.  Since $r(B_{k}) = 2^{k}r \geq 1$, the latter condition is not required for $a_{k+1}$.

To verify Condition (3) in Definition~\ref{def:Approxh1bAtom} , we need to check that the $a_j$ satisfy the approximate cancellation condition against $b$. For $j = 0$, we get the approximate cancellation for $a_0$ from that of $A$:
$$
\bigg|\int a_0b\bigg| =  \frac 12\bigg|\int Ab- \alpha \int b \eta_0\bigg| = \frac 12\bigg|\int A[b-b_B]\bigg|\leq \frac{C_b}{\log(1+r^{-1})}.
$$
For $j=1,..., k$, we have, using \eqref{eqn:doubling},
\begin{align*}
\bigg|\int a_j b \bigg|&=  \frac{1}{2^{n}\log(1+[2^jr]^{-1})}\bigg| \int b(\eta_{j-1} - \eta_j)  \bigg|\\
&=  \frac{1}{2^{n}\log(1+[2^jr]^{-1})} |b_{2^{j-1}B}-b_{2^{j}B}| \\
& \leq  \frac{\|b\|_{\bmo}}{\log(1+[2^jr]^{-1})},
\end{align*}
and for $j = k+1$, since $r(B_k) = 2^kr \geq 1$,
$$
\bigg|\int a_{k+1} b\bigg| = |b_{B_k}| \leq  \frac{\|b\|_{\bmo}}{\log(1+(2^kr)^{-1})}.
$$
If, as noted after Definition~\ref{def:Approxh1bAtom}, we take $C_b = \|b\|_{\bmo, 2} \geq \|b\|_{\bmo}$,
then we can conclude that $a_j$ for $j=0,...,k+1$ are all approximate $\honeb$ atoms.

Finally, we have
\begin{align*}
\sum |\lambda_j|&= 2+\alpha 2^{n} \sum_{j=1}^k \log(1+[2^jr]^{-1}+ \alpha\\
&\leq 3+\frac{2^{n}}{[\log(1+r^{-1})]^2}  \sum_{j=1}^k \log(1+[2^jr]^{-1})\\
&\leq 3+\frac{2^{n}  k \log(1+r^{-1})}{[\log(1+r^{-1})]^2}\\
&\leq 3+ \frac{2^{n}(\log_2 r^{-1}+1)}{[\log(1+r^{-1})]} \\
&\leq  C_{n}.
\end{align*}
\end{proof}

\subsection{Relations between the spaces}
For the homogeneous case, Ky shows (see \cite[Theorem 5.2]{Ky}) that
$$\cHoneb(\Rn) \subset \Honeb(\Rn) \subset \Hone(\Rn),$$
where $\cHoneb$ is P\'erez atomic space and $\Honeb$ is the maximal commutator space, but does not discuss whether the inclusion is proper.  The result of \cite{HST} shows that the last inclusion is proper, at least in dimension $1$, unless $b$ is a constant.  

For the nonhomogeneous case, the discussion following the definitions of the atomic spaces in the previous section gave us the inclusions
$$\honeperb(\Rn)\subset\honeatomb(\Rn)\subset\hone(\Rn).$$
In the trivial case when $b$ is a constant function, the cancellation conditions against $b$ reduce to the conditions on the integral of the atoms.  Thus every $\hone$ atom with exact cancellation is a P\'erez $\honeb$ atom, so
\begin{equation}
\label{eqn:bconstant}
\honeperb(\Rn)=\honeatomb(\Rn)=\hone(\Rn).
\end{equation}
In the next section will see later that the second equality can hold for $b$ in a nontrivial subspace of $\bmo$.

While we have not defined a maximal space, we can consider the action of the commutator maximal function $\cMb$ on atoms.
\begin{proposition}
\label{h1binc}
Let $b\in \bmo(\Rn)$. Then for every $\honeb$ atom $a$,
$$\|\cM_b a\|_{L^1} \lesssim  \|b\|_\bmo.$$
\end{proposition}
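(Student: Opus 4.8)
The plan is to combine the pointwise domination $\cM_b a(x) \lesssim M_b a(x)$ from \eqref{eqn:cMb} with the $L^2$ boundedness of $M_b$: by \eqref{eqn:MbBound} and the inclusion $\bmo(\Rn) \subset \BMO(\Rn)$ (with comparable norms), $\|M_b g\|_{L^2} \lesssim \|b\|_\bmo \|g\|_{L^2}$. Writing $B = B(x_0, r)$ for a ball realizing the conditions of Definition~\ref{def:Approxh1bAtom} on the atom $a$, I would split $\int_{\Rn} \cM_b a = \int_{2B} \cM_b a + \int_{(2B)^c} \cM_b a$. On $2B$, Cauchy--Schwarz and the $L^2$ bound immediately give $\int_{2B} \cM_b a \leq |2B|^{1/2} \|M_b a\|_{L^2} \lesssim |2B|^{1/2} \|b\|_\bmo \|a\|_{L^2} \lesssim \|b\|_\bmo$, using only the size condition $\|a\|_{L^2} \leq |B|^{-1/2}$; no cancellation is needed for this part.

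For the complement, the first observation is that if $\phi \in C^1(\Rn)$ is supported in a ball $B(x,t)$ with $t < 1$ and the pairing $\langle (b(x) - b)a, \phi\rangle$ is nonzero, then $B(x,t)$ must meet $B$, which forces $|x - x_0| < 1 + r$ and, when $x \in (2B)^c$ (so $|x - x_0| \geq 2r$), also $t > |x - x_0| - r \geq |x - x_0|/2$. In particular $\cM_b a$ vanishes on $(2B)^c$ when $r \geq 1$, so we may assume $r < 1$ and integrate over the annular region $\{2r \leq |x - x_0| < 1 + r\} \subset B(x_0, 2)$. For $x$ there and an admissible $\phi$, I would write $b(x) - b(y) = (b(x) - c_B) - (b(y) - c_B)$ with $c_B$ as in \eqref{eqn: cB}, and in each of the two resulting integrals subtract the value $\phi(x_0)$ (set to $0$ if $x_0 \notin \supp \phi$). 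On the $\phi(x_0)$ part one uses the two approximate cancellation conditions of Definition~\ref{def:Approxh1bAtom}, namely $|\int a| \leq [\log(1 + r^{-1})]^{-2}$ and $|\int a(b - c_B)| \leq C_b [\log(1+r^{-1})]^{-1}$ with $C_b = \|b\|_{\bmo,2}$, together with $\|\phi\|_\infty \lesssim t^{-n}$; on the remaining part one uses $|\phi(y) - \phi(x_0)| \leq r\|\nabla\phi\|_\infty \lesssim r\, t^{-n-1}$ for $y \in B$, the normalization $\|a\|_{L^2} \leq |B|^{-1/2}$, and Cauchy--Schwarz with \eqref{eqn:bmop} to bound $\int_B |b - c_B||a|$. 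Taking the supremum over $\phi$ (so that $t \gtrsim |x - x_0|$) then yields a pointwise bound on $\cM_b a(x)$, for $2r \leq |x - x_0| < 1 + r$, by a sum of terms of the shape $(|b(x) - c_B| + \|b\|_\bmo)\, r\, |x - x_0|^{-n-1}$, then $|b(x) - c_B|\,|x - x_0|^{-n}[\log(1+r^{-1})]^{-2}$, and $\|b\|_\bmo\,|x - x_0|^{-n}[\log(1+r^{-1})]^{-1}$.

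It then remains to integrate this bound over $\{2r \leq |x - x_0| < 1 + r\}$. The terms carrying $|x - x_0|^{-n-1}$ are controlled using $\int_{|x - x_0| \geq 2r} |x - x_0|^{-n-1} dx \approx r^{-1}$ and, for the piece weighted by $|b(x) - c_B|$, inequality \eqref{eqn:bintegral} with $\delta = p = 1$. The terms carrying $|x - x_0|^{-n}$ are integrated only over $\{2r \leq |x - x_0| < 2\}$, where $\int |x - x_0|^{-n} dx \approx \log r^{-1} \lesssim \log(1 + r^{-1})$; for the piece weighted by $|b(x) - c_B|$ one needs the dyadic estimate $\int_{2r \leq |x - x_0| < 2} |b(x) - c_B|\,|x - x_0|^{-n}\, dx \lesssim [\log(1 + r^{-1})]^2 \|b\|_\bmo$, obtained by summing over the annuli $2^k r \leq |x - x_0| < 2^{k+1} r$, $k = 1, \ldots, \lesssim \log_2 r^{-1}$, and using $|c_{2^{k+1}B} - c_B| \lesssim (k+2)\|b\|_\bmo$, which follows by iterating \eqref{eqn:doubling}. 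The main obstacle, and the only delicate point, is exactly this last bookkeeping of logarithms: the tail integration manufactures factors of $\log(1 + r^{-1})$ and $[\log(1 + r^{-1})]^2$, and these must be absorbed precisely by the $[\log(1+r^{-1})]^{-1}$ and $[\log(1+r^{-1})]^{-2}$ built into the two approximate cancellation conditions. In other words, it is the specific calibration of the cancellation in Definition~\ref{def:Approxh1bAtom} that makes the estimate close, and verifying this balance is where the real work lies; the local estimate on $2B$ is entirely routine.
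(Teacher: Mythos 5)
Your proposal is correct, and its overall architecture (local part via the pointwise domination $\cMb a \lesssim M_b a$ and the $L^2$ bound \eqref{eqn:MbBound}; far part via the support observation \eqref{eqn:intersection}, subtraction of $\phi(x_0)$, and the cancellation conditions) coincides with the paper's. The one genuine difference is how the case $\int a \neq 0$ is handled. The paper first invokes Proposition~\ref{prop:decomoph1b} to replace the approximate atom by a finite combination of atoms with \emph{exact} vanishing integral, after which the term $|b(x)-c_B|\,|x-x_0|^{-n}\,|\int a|$ never appears and only three tail terms remain, each integrable against \eqref{eqn:bintegral} or the elementary $\log(1/r)$ bound. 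You instead keep the original atom and absorb that extra term using the calibration $|\int a|\leq [\log(1+r^{-1})]^{-2}$ together with the dyadic estimate $\int_{2r\leq|x-x_0|<2}|b-c_B|\,|x-x_0|^{-n}\,dx\lesssim [\log(1+r^{-1})]^{2}\|b\|_{\bmo}$, obtained by iterating \eqref{eqn:doubling}. This is exactly the alternative the authors themselves sketch in the remark following Proposition~\ref{ainh1bandh1} (where they note that with $\delta=0$, $p=1$ the analogue of \eqref{eqn:bintegral} produces $\|b\|_{\bmo}(\log\frac1r)^2$, which is why the $[\log(1+r^{-1})]^{-2}$ condition is built into Definition~\ref{def:Approxh1bAtom}). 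What your route buys is independence from the decomposition result of Proposition~\ref{prop:decomoph1b}; what it costs is the extra log-bookkeeping, which you have carried out correctly, including the observation that the near-$r=1$ regime is harmless since the logarithms are then bounded below.
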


\begin{proof}
Let $a$ be an approximate $\honeb$ atom with support in $B=B(x_0,r)$. We want to bound $\cM_b(a)$. 

For the local estimate, using \eqref{eqn:cMb}, we can apply the  $L^p$ boundedness of the maximal commutator operator 
$M_bf$
defined in Section~\ref{sec:maximal}, with the bound \eqref{eqn:MbBound}. Thus, using the $L^2$ size condition on  $a$ we have
$$
\int_{2B} \cMb(a) 
\leq |2B|^{1/2} \|M_b a\|_{L^2(2B)} \lesssim  |B|^{\frac12} \|a\|_{L^2} \|M_b\|_{L^2 \mapsto L^2} \lesssim \|b\|_\bmo.
$$

Next we handle the integral on $(2B)^c$. Note that
$$\bigg|\int_{B(x,t)} \phi(y)[b(x)-b(y)]a(y)dy\bigg| \neq 0$$
implies that there exists $y\in B(x,t)\cap B(x_0,r)$, which in turn implies
\begin{equation}
\label{eqn:intersection}
r\leq \frac{|x-x_0|}{2} \leq  |x-x_0|-|x_0-y|\leq |x-y|\leq t< 1. 
\end{equation}
This cannot happen when $r\geq 1$, so in that case the integral vanishes for all test functions $\phi$ and $\cM_b(a) = 0$ on $(2B)^c$.

For $r<1$, we may assume, by Proposition~\ref{prop:decomoph1b}, that $\int a = 0$.  Thus we have, for almost all $x \in (2B)^c$,
\begin{align}
\nonumber
&\bigg|\int [b(x)-b(y)]a(y) \phi(y)dy\bigg| \\
\nonumber
&\leq \bigg|\int [b(x)-b(y)]a(y)[\phi(y)-\phi(x_0)]dy\bigg|+ \bigg|\phi(x_0)\int b(y)a(y)dy\bigg|\\
\nonumber
&\leq  |b(x)-b_B|\|\grad \phi\|_\Linfty r \int|a(y)|dy+ \|\grad \phi\|_\Linfty r\int |b(y)-b_B| |a(y)|dy\\
\nonumber
&\quad\quad \quad + \| \phi\|_\Linfty \bigg|\int b(y)a(y)dy\bigg|\\
\label{eqn:not2B}
&\lesssim \frac{|b(x)-b_{B}|r}{|x-x_0|^{n+1}} + \frac{r\|b\|_{\bmo,2}}{|x-x_0|^{n+1}}+ \frac{C_b}{|x-x_0|^{n}\log(1+r^{-1})},
\end{align}
where in the last step we used the conditions on $\phi$, \eqref{eqn:intersection}, the $L^1$ estimate on $a(b - b_B)$ (see \eqref{eqn:abc} with $s = 1$, $p = 2$), and condition (3) in Definition~\ref{def:Approxh1bAtom}.
Since the estimate above is independent of $\phi$, it holds for $\cMb(a)(x)$ and therefore, again using \eqref{eqn:intersection},
\begin{align}
\nonumber
\int_{(2B)^c} \cMb(a)(x)  dx 
&=  \int_{2r<|x-x_0|<2}  \cMb(a)(x) dx \\
\label{eqn:not2Bint}
&\lesssim \int_{(2B)^c}\frac{|b(x)-b_{B}|r}{|x-x_0|^{n+1}} + \int_{(2B)^c}\frac{r\|b\|_{\bmo,2}}{|x-x_0|^{n+1}}\\
\nonumber
&\quad\quad\quad\quad+ \int_{2r<|x-x_0|<2}\frac{C_b}{|x-x_0|^{n}\log(1+r^{-1})}\\
\nonumber
&\lesssim \|b\|_{\bmo}.
\end{align}
Here we have used \eqref{eqn:bintegral}, \eqref{eqn:bmop}, and the fact that $C_b = \|b\|_{\bmo,2}$.
\end{proof}

If we take an $\hone$ atom $a$ without assuming any cancellation against $b$, the boundedness of the maximal function $\cMb$ turns out to be equivalent to whether $a(b-c_{B})$ belongs to $\hone$, where $B$ is the ball containing the support of $a$.  Such boundedness gives us automatically the approximate cancellation against $b$.

\begin{proposition} 
\label{ainh1bandh1}
Let $b \in \bmo(\Rn)$, $b$ nontrivial. Suppose $a$ is an $\hone$ atom supported in a ball $B = B(x_0,r)$, and has vanishing integral when $r < 1$.
Then 
$$
\|\cMb a\|_{L^1} \lesssim \|a(b-c_{B})\|_{\hone} + \|b\|_{\bmo} \lesssim \|\cMb a\|_{L^1}+ \|b\|_{\bmo}.
$$
Moreover,
\begin{align*}
\bigg|\int ab\bigg|\lesssim \min\bigg\lb\frac{\|\cMb a\|_{L^1}+ \|b\|_{\bmo}}{\log(1+r^{-1})},  \|b\|_{\bmo}\bigg\rb.
\end{align*} 
\end{proposition}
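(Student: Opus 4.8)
The plan rests on the elementary identity
$[b(x)-b(y)]a(y)=[b(x)-c_B]a(y)-[b(y)-c_B]a(y)$
together with the sublinearity of the grand maximal function $\cM$ and the fact that $\cM[(b(x)-c_B)a(\cdot)](x)=|b(x)-c_B|\,\cM a(x)$, the factor $b(x)-c_B$ being a constant in the integration variable. Writing $g:=(b-c_B)a$, which lies in $\hone$ (it is a compactly supported $L^2$ function; cf.\ \eqref{eqn:abc} and Proposition~\ref{prop:h^1}), the identity yields, for a.e.\ $x$,
\begin{equation*}
\cMb a(x)\ \leq\ |b(x)-c_B|\,\cM a(x)+\cM g(x),\qquad \cM g(x)\ \leq\ |b(x)-c_B|\,\cM a(x)+\cMb a(x).
\end{equation*}
Integrating these two bounds and invoking $\|g\|_{\hone}\approx\|\cM g\|_{L^1}$ from Proposition~\ref{prop:h^1}(2), I reduce the two-sided estimate in the statement to the single inequality
\begin{equation}\label{eqn:crux}
\int_{\Rn}|b(x)-c_B|\,\cM a(x)\,dx\ \lesssim\ \|b\|_\bmo ,
\end{equation}
which I regard as the crux of the proof.

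To prove \eqref{eqn:crux} I would split the integral over $2B$ and over $(2B)^c$. On $2B$: since every test function $\phi$ defining $\cM$ satisfies $\supp\phi\subset B(x,t)$ and $\|\phi\|_\Linfty\leq|B(x,t)|^{-1}$, one has the pointwise bound $\cM a\leq Ma$, so $\|\cM a\|_{L^2(2B)}\lesssim\|a\|_{L^2}\leq|B|^{-1/2}$; by Cauchy--Schwarz the local part is then controlled once $\|b-c_B\|_{L^2(2B)}\lesssim|B|^{1/2}\|b\|_\bmo$, which is a routine case analysis in $r$ (cases $2r<1$, $r<1\leq 2r$, $r\geq1$) using \eqref{eqn:bmop} and the doubling bound \eqref{eqn:doubling}. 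On $(2B)^c$: when $r\geq1$, the scale restriction $t<1$ together with the geometric inequality \eqref{eqn:intersection} forces $\cM a\equiv0$ there; when $r<1$ the assumed vanishing moment $\int a=0$ lets me subtract $\phi(x_0)$ in $\langle a,\phi\rangle$, producing the molecular decay $\cM a(x)\lesssim r\,|x-x_0|^{-n-1}$ on $\{2r<|x-x_0|<2\}$ and $\cM a(x)=0$ otherwise. Consequently
$$\int_{(2B)^c}|b(x)-c_B|\,\cM a(x)\,dx\ \lesssim\ r\int_{|x-x_0|>r}\frac{|b(x)-c_B|}{|x-x_0|^{n+1}}\,dx\ \lesssim\ \|b\|_\bmo$$
by \eqref{eqn:bintegral} with $\delta=p=1$, which finishes \eqref{eqn:crux}.

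Granting \eqref{eqn:crux}: the first of the two displayed pointwise bounds gives $\|\cMb a\|_{L^1}\lesssim\|\cM g\|_{L^1}+\|b\|_\bmo\approx\|g\|_{\hone}+\|b\|_\bmo$, and the second gives $\|g\|_{\hone}\approx\|\cM g\|_{L^1}\lesssim\|\cMb a\|_{L^1}+\|b\|_\bmo$; with $g=(b-c_B)a$ this is exactly the two-sided estimate. For the ``moreover'' clause, observe that $\int ab=\int g$ in all cases (when $r<1$ because $\int a=0$, when $r\geq1$ because $c_B=0$). On one hand $|\int ab|=|\int g|\leq\|a\|_{L^2}\,\|b-c_B\|_{L^2(B)}\lesssim\|b\|_\bmo$ by \eqref{eqn:bmop}, which is the second entry of the minimum. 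On the other hand Proposition~\ref{prop:meanestimate} applied to $g\in\hone$ supported in $B(x_0,r)$ gives $|\int ab|=|\int g|\lesssim\|g\|_{\hone}/\log(1+r^{-1})$, and combining with $\|g\|_{\hone}\lesssim\|\cMb a\|_{L^1}+\|b\|_\bmo$ from the two-sided estimate yields the first entry.

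The main obstacle is \eqref{eqn:crux}, and within it the contribution of $(2B)^c$: one must use in an essential way that $\cM$ is built from compactly supported test functions living at scales $t<1$ — this is what annihilates the case $r\geq1$ and, for $r<1$, produces both the gain $r\,|x-x_0|^{-n-1}$ and the truncation $|x-x_0|<2$, after which the weighted $\bmo$ estimate \eqref{eqn:bintegral} carries the day. The remaining steps are bookkeeping: handling the nonhomogeneous normalization $c_B$ across the threshold $r=1$ and checking $g\in\hone$ so that the maximal characterization of $\|g\|_{\hone}$ applies.
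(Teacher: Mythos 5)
Your proposal is correct and follows essentially the same route as the paper: the same splitting $[b(x)-b(y)]a(y)=[b(x)-c_B]a(y)-[b(y)-c_B]a(y)$, the same reduction to the crux estimate $\int|b-c_B|\,\cM a\lesssim\|b\|_\bmo$ handled by Cauchy--Schwarz on $2B$ and by the scale restriction $t<1$ together with the vanishing moment and \eqref{eqn:bintegral} on $(2B)^c$, and the same use of Proposition~\ref{prop:meanestimate} applied to $a(b-c_B)$ for the ``moreover'' clause.
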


\begin{proof}
We assume that $a$ satisfies an $L^2$ size condition.
First note that by \eqref{eqn:abc}, $a(b-c_{B}) \in L^s(\Rn)$ for $1 < s < 2$, and it has compact support,  $a(b-c_{B})$ is in $\hone(\Rn)$.
By Definition~\ref{def:cmbf}, we have, for almost every $x \in \Rn$,
$$\cMb(a)(x) = \cM([b(x) - b]a)(x) \leq |b(x) - c_B|\cM(a)(x) + \cM([b - c_B]a)(x),$$
hence
\begin{align*}
 \|\cMb(a)\|_\Lone
& \leq \int |b(x) - c_B|\cM(a)(x)dx + \|\cM([b - c_B]a)\|_\Lone\\
& \lesssim \int |b(x) - c_B|\cM(a)(x)dx +  \|a(b-c_{B})\|_{\hone}.
\end{align*} 
Conversely,
$$\cM([b - c_B]a)(x) \leq \cMb(a)(x) + |b(x) - c_B|\cM(a)(x),$$
so
\begin{align*}
\|a(b-c_{B})\|_{\hone} & \leq \|\cMb(a)\|_\Lone + \int |b(x) - c_B|\cM(a)(x)dx.
\end{align*} 
It thus remains to show that the integral of $|b - c_B|\cM(a)$ is controlled by $\|b\|_\bmo$. The arguments are similar to those in the proof of Proposition~\ref{h1binc}.  First, we have
$$\int_{2B} |b - c_B| \cM(a) 
\leq \|b - c_B\|_{L^2(2B)} \|M a\|_{L^2(2B)} \lesssim  \|b\|_\bmo |B|^{\frac12} \|a\|_{L^2} \lesssim \|b\|_\bmo.$$
Moreover, \eqref{eqn:intersection} implies that if $r \geq 1$, $\cM(a)$ vanishes outside $2B$.  For $r < 1$, we can use the cancellation condition on $a$ to write,
 as in \eqref{eqn:not2B}, for every test function $\phi$, and for almost every $x \in (2B)^c$, 
\begin{align*}
|b(x)-c_B|\bigg|\int a(y) \phi(y)dy\bigg| 
& \leq  |b(x)-b_B|\|\grad \phi\|_\Linfty r \int|a(y)|dy \\
&\lesssim \frac{|b(x)-b_{B}|r}{|x-x_0|^{n+1}}.
\end{align*}
The integral on $(2B)^c$ is then estimated as in the first term of \eqref{eqn:not2Bint}.

By applying Proposition~\ref{prop:meanestimate} to $a(b-c_{B})\in h^1(\Rn)$, followed by the estimates above, we have 
\begin{align*}
\bigg|\int ab\bigg| = \bigg|\int a(b-c_{B}) \bigg| \lesssim \frac{\|a(b-c_{B})\|_{h^1}}{\log(1+r^{-1})}\lesssim \frac{(\|\cMb a\|_{L^1}+ \|b\|_{\bmo})}{\log(1+r^{-1})}.
 \end{align*}
The other upper bound follows from the duality of $\bmo$ and $\hone$, or by Cauchy-Schwartz.
\end{proof}

It is interesting to note that if we had not assumed exact cancellation on $a$ in the hypotheses of Proposition~\ref{ainh1bandh1} when $r < 1$, we would have ended up having to estimate an extra term of the form
$$
\int_{2r<|x-x_0|<2}  \frac{|b(x) - b_B|}{|x-x_0|^{n}}dx \bigg| \int a \bigg|.
$$
Looking at the proof of \eqref{eqn:bintegral}, if $\delta = 0$ and $p = 1$, the integral in $x$ can be estimated by $\|b\|_\bmo(\log \frac 1 r)^2$.  Therefore, we would need a cancellation condition of the form  $\big|\int a\big|\leq[\log(1+r^{-1})]^{-2}$, as in condition (3) of Definition~\ref{def:Approxh1bAtom}.

Combining Propositions~\ref{prop:decomoph1b}, \ref{h1binc} and \ref{ainh1bandh1}, we get the following. 
\begin{corollary}
Let $f\in \hone(\Rn)$. Then the following are equivalent.
\begin{enumerate}
\item The function $f\in \honeatomb(\Rn)$.
\item There exists a sequence $\lb \lambda_j\rb\in \ell^1$ and a collection of $h^1$ atoms $\lb a_j\rb$, where $B_j$ denotes the ball containing the support of $a_j$ and $a_j$ has vanishing integral  when $r(B_j)< 1$, so that $f=\sum_{j} \lambda_j a_j$ and $\cMb a_j$ are uniformly bounded in $L^1(\Rn)$.
\item There exists a sequence $\lb \lambda_j\rb\in \ell^1$ and a collection of $h^1$ atoms $\lb a_j\rb$, where $B_j$ denotes the ball containing the support of $a_j$ and $a_j$ has vanishing integral  when $r(B_j)< 1$, so that $f=\sum_{j} \lambda_j a_j$ and
$$\sum_{j}\lambda_j a_j(b-c_{B_j}) \mbox{ converges absolutely in } \hone(\Rn).$$
\end{enumerate}
\end{corollary}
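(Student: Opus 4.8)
The plan is to deduce the corollary by chaining Propositions~\ref{prop:decomoph1b}, \ref{h1binc} and \ref{ainh1bandh1}, together with the mean-value estimate of Proposition~\ref{prop:meanestimate}, proving the implications $(1)\Rightarrow(2)\Rightarrow(3)\Rightarrow(1)$. I would first reduce to the case where $b$ is nontrivial, since when $b$ is a constant all three statements collapse to $f\in\hone(\Rn)$ by \eqref{eqn:bconstant}. Two observations will be used repeatedly: dividing an atom by a constant $\geq1$ preserves every size, support, and (approximate or exact) cancellation condition in Definitions~\ref{def:R-atom} and \ref{def:Approxh1bAtom}; and every approximate $\honeb$ atom has $\hone$ norm $\lesssim1$, so that an $\ell^1$-combination of them converges absolutely, and may be freely rearranged, in $\hone(\Rn)$.

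For $(1)\Rightarrow(2)$ I would start from a decomposition $f=\sum_j\mu_j A_j$ into approximate $\honeb$ atoms supported in balls $B_j$, with $\{\mu_j\}\in\ell^1$. For the indices with $r(B_j)<1$, Proposition~\ref{prop:decomoph1b} rewrites $A_j$ as a \emph{finite} sum $\sum_i\lambda_{j,i}a_{j,i}$ of approximate $\honeb$ atoms that in addition have vanishing integral on their small supporting balls, with $\sum_i|\lambda_{j,i}|\leq C_n$; the indices with $r(B_j)\geq1$ need no modification, since no cancellation is required there. Reindexing yields $f=\sum_k\nu_k c_k$ with $\{\nu_k\}\in\ell^1$ and each $c_k$ an $\hone$ atom with vanishing integral whenever its ball has radius $<1$. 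As each $c_k$ is still an approximate $\honeb$ atom, Proposition~\ref{h1binc} gives $\sup_k\|\cMb c_k\|_{L^1}\lesssim\|b\|_\bmo<\infty$, which is statement (2).

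For $(2)\Rightarrow(3)$ the argument is short: applying Proposition~\ref{ainh1bandh1} to each atom $a_j$ of the decomposition in (2) gives $\|a_j(b-c_{B_j})\|_\hone\lesssim\|\cMb a_j\|_{L^1}+\|b\|_\bmo$, which is uniformly bounded in $j$, so that $\sum_j|\lambda_j|\,\|a_j(b-c_{B_j})\|_\hone<\infty$; this is exactly the absolute convergence of $\sum_j\lambda_j a_j(b-c_{B_j})$ in $\hone(\Rn)$. For $(3)\Rightarrow(1)$ I would rescale: with $C_0$ the implied constant of Proposition~\ref{prop:meanestimate}, set $\kappa_j:=\max\{1,\,C_0\|a_j(b-c_{B_j})\|_\hone/\|b\|_{\bmo,2}\}$. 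Using Proposition~\ref{prop:meanestimate} applied to $a_j(b-c_{B_j})\in\hone(\Rn)$ on the balls with $r(B_j)<1$, and the remark following Definition~\ref{def:Approxh1bAtom} (that the $L^2$ size condition already forces the approximate cancellation when $r(B_j)\geq1$), one checks that $\kappa_j^{-1}a_j$ is an approximate $\honeb$ atom. Then $f=\sum_j(\lambda_j\kappa_j)(\kappa_j^{-1}a_j)$ with $\sum_j|\lambda_j\kappa_j|\leq\|\{\lambda_j\}\|_{\ell^1}+\|b\|_{\bmo,2}^{-1}C_0\sum_j|\lambda_j|\,\|a_j(b-c_{B_j})\|_\hone<\infty$, so $f\in\honeatomb(\Rn)$.

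The genuinely routine parts are verifying that rescaling by a factor $\geq1$ preserves atomicity and that the reindexed and rescaled series still represent $f$ in $\hone(\Rn)$. The substance is already isolated: Proposition~\ref{prop:decomoph1b} is what lets one trade the approximate moment condition of an $\honeatomb$ atom for exact cancellation on small balls, and Proposition~\ref{ainh1bandh1} supplies the two-sided equivalence between control of $\cMb$ on an atom and membership of $a(b-c_B)$ in $\hone$. The main obstacle in assembling the corollary is therefore organizational: keeping the regime $r(B_j)\geq1$ (where no cancellation is needed and, by \eqref{eqn:intersection}, $\cMb$ is supported near the ball) separate from the small-ball regime throughout; and, in $(3)\Rightarrow(1)$, the fact that the hypothesis only gives finiteness of the \emph{weighted} sum $\sum_j|\lambda_j|\,\|a_j(b-c_{B_j})\|_\hone$, rather than a uniform bound on $\|a_j(b-c_{B_j})\|_\hone$, which is precisely what forces the rescaling.
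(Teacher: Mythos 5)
Your argument is correct and is precisely the proof the paper intends: the corollary is stated there with the one-line justification "Combining Propositions~\ref{prop:decomoph1b}, \ref{h1binc} and \ref{ainh1bandh1}," and your cycle $(1)\Rightarrow(2)\Rightarrow(3)\Rightarrow(1)$ fleshes out exactly that combination, with Proposition~\ref{prop:meanestimate} supplying the approximate cancellation needed in the last step. The rescaling by $\kappa_j\geq 1$ in $(3)\Rightarrow(1)$ and the separate treatment of the large-ball regime are the right way to handle the only delicate points.
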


\subsection{The case of $b$ in lmo}
We now come to the promised result which gives us the second equality in \eqref{eqn:bconstant} even when $b$ is not constant. Recall that  the pointwise multipliers of $\bmo(\Rn)$,  and hence $h^1(\Rn)$, were identified as the elements of $L^{\infty}\cap \lmo(\Rn)$ in \cite{BF}.  It is therefore not surprising that we have the following results.

\begin{theorem} 
\label{h1atombish1}
Let $b\in \lmo(\Rn)$. Then $\honeatomb(\Rn)=h^1(\Rn)$. Moreover,
\begin{align*}
\|f\|_{h^1} \leq \|f\|_{\honeatomb} \lesssim \frac{\|b\|_{\LMOloc}}{\|b\|_\BMOloc}\|f\|_{h^1}.
\end{align*}
\end{theorem}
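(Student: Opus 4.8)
The inclusion $\honeatomb(\Rn)\subset\hone(\Rn)$, together with the bound $\|f\|_{\hone}\le\|f\|_{\honeatomb}$, has already been noted: every approximate $\honeb$ atom is, up to the factor $\log 2$, an approximate $(1,2)$ atom in the sense of Definition~\ref{def:R-atom} with $R=1$, $q=2$. So the whole content of the theorem is the reverse inclusion $\hone(\Rn)\subset\honeatomb(\Rn)$ with the stated quantitative bound. The plan is to take an atomic decomposition of $f\in\hone(\Rn)$ into Goldberg atoms, multiply every atom by a single scalar $\Lambda$ chosen so that the rescaled functions become approximate $\honeb$ atoms, and track how $\Lambda$ enters the $\ell^1$ norm of the coefficients. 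We may assume $b$ is nonconstant, the constant case being \eqref{eqn:bconstant}; then $\|b\|_{\BMOloc}$, $\|b\|_{\LMOloc}$ and $C_b=\|b\|_{\bmo,2}$ are all strictly positive.

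Write $f=\sum_k\mu_k g_k$ in $\cS'(\Rn)$ with $g_k$ Goldberg $\hone$ atoms and $\sum_k|\mu_k|\lesssim\|f\|_{\hone}$ (Proposition~\ref{prop:h^1}, condition (4)). For each $k$ let $B_k=B(x_k,r_k)$ be the smallest ball containing the supporting cube $Q_k$ of $g_k$, so $|B_k|\approx|Q_k|$ and $\|g_k\|_{L^2}\lesssim|B_k|^{-1/2}$. Fix a dimensional constant $C_n\ge 1$ (large enough to absorb the cube-to-ball constants below) and set
$$\Lambda:=C_n\max\Bigl(1,\ \frac{\|b\|_{\LMOloc}}{C_b}\Bigr),\qquad a_k:=\Lambda^{-1}g_k,\qquad\lambda_k:=\Lambda\mu_k,$$
so that $f=\sum_k\lambda_k a_k$ termwise (convergence inherited from the Goldberg decomposition). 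I claim each $a_k$ is an approximate $\honeb$ atom. Condition (1) of Definition~\ref{def:Approxh1bAtom} is trivial and condition (2) holds because $\Lambda\ge C_n$ absorbs the loss in the $L^2$-normalization in passing from $Q_k$ to $B_k$. For condition (3): if $r_k\ge 1$, the remark after Definition~\ref{def:Approxh1bAtom} (the choice $C_b=\|b\|_{\bmo,2}$ and Cauchy--Schwarz) shows (2) implies (3), and this passes to $a_k$. If $r_k<1$ and $|Q_k|<1$, then $\int g_k=0$, so only the cancellation against $b$ needs checking, and since $c_{B_k}=b_{B_k}$,
$$\Bigl|\int a_k(b-c_{B_k})\Bigr|=\Lambda^{-1}\Bigl|\int g_k(b-b_{B_k})\Bigr|\le\frac{C_n}{\Lambda}\fint_{B_k}|b-b_{B_k}|\le\frac{C_n}{\Lambda}\cdot\frac{\|b\|_{\LMOloc}}{\log(1+r_k^{-1})}\le\frac{C_b}{\log(1+r_k^{-1})},$$
using $\|g_k\|_{L^\infty}|B_k|\lesssim 1$, the definition of $\|b\|_{\LMOloc}$, and $\Lambda\ge C_n\|b\|_{\LMOloc}/C_b$. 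The remaining case, $r_k<1$ with $|Q_k|\ge 1$ (possible only in low dimension), is handled the same way for the cancellation against $b$, while $|\int a_k|\le\Lambda^{-1}\le[\log(1+r_k^{-1})]^{-2}$ once $C_n$ is large, since $r_k$ is then bounded below.

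It remains to turn $\|f\|_{\honeatomb}\le\sum_k|\lambda_k|=\Lambda\sum_k|\mu_k|\lesssim\Lambda\|f\|_{\hone}$ into the stated estimate, i.e.\ to show $\Lambda\lesssim\|b\|_{\LMOloc}/\|b\|_{\BMOloc}$. This rests on two elementary comparisons. First, $\|b\|_{\BMOloc}\le\|b\|_{\bmo}\lesssim C_b$ --- the first inequality because $c_B=b_B$ for balls of radius $<1$, the second by \eqref{eqn:bmop} --- so the branch $C_n\|b\|_{\LMOloc}/C_b$ of the maximum is $\lesssim\|b\|_{\LMOloc}/\|b\|_{\BMOloc}$. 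Second, $\|b\|_{\LMOloc}\ge(\log 2)\|b\|_{\BMOloc}$, since $\log(1+r(B)^{-1})>\log 2$ whenever $r(B)<1$; hence the branch $C_n$ is also $\lesssim\|b\|_{\LMOloc}/\|b\|_{\BMOloc}$. Either way $\Lambda\lesssim\|b\|_{\LMOloc}/\|b\|_{\BMOloc}$, which gives the theorem.

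There is no deep obstacle: the point is precisely that membership in $\LMOloc$ forces the oscillation of $b$ over a ball of radius $r$ to decay at the rate $1/\log(1+r^{-1})$ --- exactly the rate built into condition (3) of the approximate $\honeb$-atom definition --- so the result is, as the text anticipates, unsurprising. The care needed is entirely bookkeeping: verifying that one $k$-independent scalar $\Lambda$ simultaneously fixes the $L^2$ normalization lost in the cube-to-ball passage and the approximate cancellation against $b$ for every atom, and getting the inequalities among $\|b\|_{\LMOloc}$, $\|b\|_{\bmo}$, $C_b=\|b\|_{\bmo,2}$ and $\|b\|_{\BMOloc}$ pointing the right way, so that the final constant is $\|b\|_{\LMOloc}/\|b\|_{\BMOloc}$ and not something weaker.
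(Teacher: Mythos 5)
Your proof is correct and follows essentially the same route as the paper's: decompose $f$ into standard $\hone$ atoms and rescale each by a single scalar comparable to $\|b\|_{\LMOloc}/\|b\|_{\BMOloc}$, using the $\LMOloc$ condition to supply the approximate cancellation against $b$ at exactly the rate required by condition (3) of Definition~\ref{def:Approxh1bAtom}. The only differences are bookkeeping: you work with $L^\infty$-normalized Goldberg atoms and the $L^1$-based oscillation, which lets you avoid the John--Nirenberg inequality for $\LMO$ that the paper invokes at the end to pass from the $L^2$-based norms $\|b\|_{\LMO_{\text{loc},2}}$, $\|b\|_{\BMO_{\text{loc},2}}$ to the stated ratio.
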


\begin{proof}
It was already observed following Definition~\ref{def:Approxh1bAtom} that for $b \in \bmo$, $\honeb$ atoms are approximate $\hone$ atoms (up to a factor of $\log 2$) and therefore $\honeb(\Rn) \subset \hone(\Rn)$ with $\|f\|_{h^1} \lesssim \|f\|_{\honeatomb}$.  We only need the $\lmo$ condition for the reverse inclusion.

Let $a$ be an $\hone$ atom with support in the ball $B = B(x_0,r)$, with $L^2$ size condition and such that  $\int a =0$. This means that conditions (1), (2), and the first part of (3) in Definition~\ref{def:Approxh1bAtom} are satisfied, and we only need to check the approximate cancellation against $b$.  Since this holds automatically for $r\geq 1$, we assume $r < 1$.
By the $L^2$ size condition on $a$, 
\begin{align*}
\bigg|\int_{\Rn} a(x)[b(x)-b_B]dx \bigg|
&\leq\frac{1}{\log(1+r^{-1})} \frac{\log(1+r^{-1})}{|B|^{\frac12}}\bigg(\int_B |b(x)-b_B|^2 dx\bigg)^{\frac{1}{2}} \\
&\leq \frac{\|b\|_{\LMO_{\text{loc},2}}}{\log(1+r^{-1})},
\end{align*}
where 
$$\|b\|_{\LMO_{\text{loc},2}} := \sup_{r(B)<1}\log(1+r(B)^{-1})\bigg(\fint_B |b(x)-b_B|^2 dx\bigg)^{\frac{1}{2}}.$$
Since 
$$\gamma := \frac{\log 2 \|b\|_{\BMO_{\text{loc},2}}}{\|b\|_{\LMO_{\text{loc},2}}} \leq 1$$
and
$\log 2 \|b\|_{\BMO_{\text{loc},2}} \leq \|b\|_{\bmo,2} = C_b$,
we have that  $\gamma a$ is an $\honeb$ atom. 

Finally, for $f\in \hone$ with an atomic decomposition $f=\sum_{j=1}^{\infty} \lambda_j a_j$, we can regard it as a decomposition using $\honeb$ atoms multiplied by $\gamma^{-1}$, so $f \in \honeatomb(\Rn)$ with 
$$\|f\|_\honeatomb \leq \gamma^{-1} \sum_{j=1}^{\infty} |\lambda_j|.$$
Taking the infimum over all such decompositions, we get 
$\|f\|_{\honeatomb} \leq \gamma^{-1} \|f\|_{\hone}.$
The inequality in the statement of the theorem is obtained by applying the John-Nirenberg inequality for $\LMO$ - see \cite{ShiTorchinsky}.
\end{proof}

We give a partial converse to the theorem.
\begin{proposition} \label{lmoequal1}
Let $b\in \Loneloc(\Rn)$. Suppose there exists a constant $C_b$ such that every $\hone$ atom $a$  satisfies 
$$\bigg|\int ab\bigg|\leq \min\bigg\lb C_b,\frac{C_b\log(2)}{\log(1+r^{-1})}\bigg\rb,$$ 
where $r$ is the radius of the ball $B$ containing the support of $a$. Then $b\in \lmo(\Rn)$.
\end{proposition}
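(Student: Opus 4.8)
The plan is a duality argument: given the hypothesis on how $b$ pairs against $h^1$ atoms, I will test the linear functional $f \mapsto \int f b$ against suitable explicit $h^1$ atoms supported in a prescribed ball and read off the $\LMOloc$ and $\bmo$ bounds for $b$ directly.

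Fix a ball $B = B(x_0,r)$; since $b \in \Loneloc(\Rn)$ the average $b_B$ makes sense, and I set $s := \sgn(b - b_B)\,\chi_B$, $s_B := \fint_B s$, and $a := \tfrac{1}{2|B|}\,(s - s_B\chi_B)$. As $|s| \le 1$ and $|s_B| \le 1$, the function $a$ is supported in $B$, satisfies $\|a\|_{L^\infty} \le |B|^{-1}$, hence $\|a\|_{L^2} \le |B|^{-1/2}$, and $\int a = 0$; thus $a$ is an $h^1$ atom associated with $B$ in the sense fixed after Proposition~\ref{prop:h^1}, i.e.\ a $1$-approximate $(1,2)$ atom (Definition~\ref{def:R-atom}), for every radius $r$. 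Using $\int_B b = |B|\,b_B$ and $\int_B s = |B|\,s_B$, a direct computation gives
$$\int a b = \frac{1}{2|B|}\Big(\int_B s b - b_B\int_B s\Big) = \frac{1}{2|B|}\int_B s\,(b - b_B) = \frac{1}{2}\fint_B |b - b_B|.$$
Feeding this into the hypothesis: the bound $|\int ab| \le C_b$ yields $\fint_B |b - b_B| \le 2C_b$ for \emph{every} ball, while for $r < 1$ the bound $|\int ab| \le C_b\log 2 / \log(1+r^{-1})$ yields $\log(1+r^{-1})\fint_B |b - b_B| \le 2C_b\log 2$, that is, $b \in \LMOloc(\Rn)$ with $\|b\|_{\LMOloc} \le 2C_b\log 2$.

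It remains to see $b \in \bmo(\Rn)$, for which I must control $|b|_B$ over balls with $r(B) \ge 1$. For such a ball, put $a' := |B|^{-1}\sgn(b)\,\chi_B$; then $\supp(a') \subset B$, $\|a'\|_{L^2} = |B|^{-1/2}$, and $|\int a'| \le 1 < (\log 2)^{-1} \le [\log(1+r^{-1})]^{-1}$, so $a'$ is again an $h^1$ atom. Since $\int a' b = \fint_B |b| = |b|_B$ and the minimum in the hypothesis equals $C_b$ when $r \ge 1$, we obtain $|b|_B \le C_b$ for all $r(B) \ge 1$. Combining: $\fint_B |b - c_B|$ equals $\fint_B |b - b_B| \le 2C_b$ when $r < 1$ and $\fint_B |b| \le C_b$ when $r \ge 1$, so $b \in \bmo(\Rn)$; together with $b \in \LMOloc(\Rn)$ this gives $b \in \lmo(\Rn) = \LMOloc(\Rn) \cap \bmo(\Rn)$, with $\|b\|_{\lmo} = \|b\|_{\LMOloc} + \sup_{r(B)\ge 1}|b|_B \lesssim C_b$. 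There is no substantive difficulty beyond bookkeeping here; the only point requiring care is checking that $a$ and $a'$ genuinely qualify as $h^1$ atoms in the precise sense used in the paper — in particular that the cancellation-free $a'$ meets the approximate moment condition on large balls, which works exactly because $[\log(1+r^{-1})]^{-1} \ge (\log 2)^{-1} > 1$ whenever $r \ge 1$.
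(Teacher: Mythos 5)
Your proof is correct and follows essentially the same route as the paper: test the hypothesis against the explicit atom built from $\sgn(b-b_B)$ on small balls and $\sgn(b)$ on large balls, and read off the $\LMOloc$ and $\bmo$ bounds directly. The only cosmetic differences are that you normalize by $\tfrac{1}{2|B|}$ to enforce the $L^\infty$ bound while the paper uses $\tfrac{1}{|B|}$ and verifies the $L^2$ size condition instead, and that you also extract the uniform $\BMOloc$ bound from the $C_b$ part of the hypothesis, which the paper does not need to state explicitly.
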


\begin{proof}
We first handle a ball $B$ with radius $r<1$. We define $s(x):= \sgn[b(x)-b_B]$ and $a(x):= \frac{s(x)-s_B}{|B|}\chi_B(x)$. Then $a$ is an $h^1$ atom. Moreover, we can write 
\begin{align*}
\log(1+r^{-1})\fint_{B} |b(x)-b_B|dx 
&=\log(1+r^{-1})\fint_{B} s(x)(b(x)-b_B)dx \\
&=\frac{\log(1+r^{-1})}{|B|} \int_{B} (s(x)-s_B)(b(x)-b_B)dx \\
&=\log(1+r^{-1})\int_{B} a(x)b(x)dx\\
&=\log(1+r^{-1}) \bigg|\int_{B} ab\bigg|\\
&\leq C_b.
\end{align*}

If $r\geq 1$,  we define $a(x):= \frac{\sgn[b(x)]}{|B|}\chi_B(x)$. A similar argument shows that
\begin{align*}
\frac{1}{|B|}\int_{B} |b(x)|dx &= \bigg|\int_{B} a(x)b(x)dx\bigg| \leq C_b.
\end{align*}
Therefore, we can conclude that $b \in \lmo(\Rn)$ with $\|b\|_\lmo \leq C_b$.
\end{proof}

In a similar vein, we give a partial converse to Proposition~\ref{h1binc}
\begin{proposition}
 \label{lmoequal2}
Let $b\in \bmo(\Rn)$.  Suppose there exists a constant $\kappa_b$ such that every $\hone$ atom $a$  satisfies 
$$\|\cMb a\|_\Lone \leq \kappa_b.$$  
Then $b$ satisfies 
\begin{align*}
\sup_{r(B)<1}\frac{1}{|B|}\||b-b_B|\chi_B\|_{h^1}+\sup_{r(B)\geq1}\frac{1}{|B|}\||b|\chi_B\|_{h^1}<\infty.
\end{align*}
\end{proposition}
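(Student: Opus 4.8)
The plan is to feed the hypothesis two explicit families of $\hone$ atoms — the sign-type atoms already used in the proof of Proposition~\ref{lmoequal1} — and then convert the resulting control of $\cMb$ into control of the desired $\hone$ norms by invoking Proposition~\ref{ainh1bandh1}, which is precisely the bridge between $\|\cMb a\|_{L^1}$ and $\|a(b-c_B)\|_{\hone}$. We may assume $b$ is nontrivial, the case $b\equiv 0$ being immediate.

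For the small-ball estimate, fix $B=B(x_0,r)$ with $r<1$, put $s=\sgn[b-b_B]$ and take $a=\frac{s-s_B}{2|B|}\chi_B$. As in Proposition~\ref{lmoequal1}, $a$ is an $\hone$ atom with $\int a=0$ and $\|a\|_{L^2}\le|B|^{-1/2}$, so the hypothesis gives $\|\cMb a\|_{L^1}\le\kappa_b$, and since $\int a=0$ and $r<1$, Proposition~\ref{ainh1bandh1} (with $c_B=b_B$) then yields $\|a(b-b_B)\|_{\hone}\lesssim\|\cMb a\|_{L^1}+\|b\|_{\bmo}\le\kappa_b+\|b\|_{\bmo}$. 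The key algebraic point is the identity $(s-s_B)(b-b_B)=|b-b_B|-s_B(b-b_B)$, which rearranges to
$$\frac{1}{|B|}|b-b_B|\chi_B \;=\; 2\,a(b-b_B)\;+\;\frac{s_B}{|B|}(b-b_B)\chi_B .$$
Now $(b-b_B)\chi_B$ is supported in $B$, has mean zero, and by \eqref{eqn:bmop} has $L^2$-norm at most $\|b\|_{\bmo,2}|B|^{1/2}$, hence $\|(b-b_B)\chi_B\|_{\hone}\lesssim\|b\|_{\bmo,2}|B|\lesssim\|b\|_{\bmo}|B|$; together with $|s_B|\le1$ and the bound on $\|a(b-b_B)\|_{\hone}$ this gives $\frac{1}{|B|}\||b-b_B|\chi_B\|_{\hone}\lesssim\kappa_b+\|b\|_{\bmo}$, uniformly over $r(B)<1$.

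For the large-ball estimate I would instead take, for $r(B)\ge1$, the atom $a=\frac{\sgn b}{|B|}\chi_B$; no moment condition is required when $r\ge1$ and $|\int a|\le1\le[\log(1+r^{-1})]^{-1}$, so $a$ is an $\hone$ atom and the hypothesis gives $\|\cMb a\|_{L^1}\le\kappa_b$. Here Proposition~\ref{ainh1bandh1} applies with $c_B=0$, and since $ab=\frac{1}{|B|}|b|\chi_B$ it yields $\frac{1}{|B|}\||b|\chi_B\|_{\hone}=\|ab\|_{\hone}\lesssim\|\cMb a\|_{L^1}+\|b\|_{\bmo}\le\kappa_b+\|b\|_{\bmo}$, uniformly over $r(B)\ge1$. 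Adding the two suprema and recalling $\|b\|_{\bmo}<\infty$ finishes the proof.

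The argument is really just an assembly of Propositions~\ref{lmoequal1} and~\ref{ainh1bandh1}; the only point that needs genuine care — more a bookkeeping check than an obstacle — is verifying that the test functions $a$ are admissible $\hone$ atoms with the $L^2$ normalization used in Proposition~\ref{ainh1bandh1}, so that both the hypothesis and that proposition legitimately apply, and keeping track of the harmless universal constant coming from $\|s-s_B\|_{L^\infty(B)}\le2$.
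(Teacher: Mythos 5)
Your proof is correct and follows the same route as the paper's: the same sign-type test atoms, the same algebraic identity $(s-s_B)(b-b_B)=|b-b_B|-s_B(b-b_B)$, the same appeal to Proposition~\ref{ainh1bandh1}, and the same bound on $\|(b-b_B)\chi_B\|_{\hone}$ via a $(1,2)$ atom. The only (harmless) difference is the factor $\tfrac12$ you insert in the small-ball atom: it is not actually needed, since $\|s-s_B\|_{L^2(B)}^2 = |B|(1-s_B^2)\leq |B|$, so $\frac{s-s_B}{|B|}\chi_B$ already satisfies the $L^2$ normalization $\|a\|_{L^2}\le |B|^{-1/2}$ that the paper uses for $\hone$ atoms; the paper therefore omits the $\tfrac12$.
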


\begin{proof}
Fix a ball $B$ with radius $r<1$. As in the previous proof, set $s(x):= \sgn[b(x)-b_B]$ and $a(x):= \frac{s(x)-s_B}{|B|}\chi_B(x)$.
Then $a$ is an $\hone$ atom, and from Proposition~\ref{ainh1bandh1},
\begin{align*}
\||b-b_B|\frac{\chi_B}{|B|}\|_{h^1} &=\|(b-b_B)s(x)\frac{\chi_B}{|B|} \|_{h^1} \\
&\leq \|a(b-b_B)\|_{h^1}+|s_B|\bigg\|\frac{(b-b_B)\chi_B}{|B|}\bigg\|_{h^1}   \\
&\lesssim \|\cM_b(a)\|_{L^1}+\|b\|_{\bmo} + |s_B|\bigg\|\frac{(b-b_B)\chi_B}{|B|}\bigg\|_{h^1} \\
&\lesssim \|\cM_b(a)\|_{L^1}+\|b\|_{\bmo}.
\end{align*}
In the last step we used the fact that  $|s_B| \leq 1$ and $\frac{(b-b_B)\chi_B}{|B|\|b\|_{\bmo,2}}$ is a $(1,2)$ atom.

For $B$ with radius $r\geq 1$, we again put $a(x)=\frac{\sgn(b(x))\chi_B(x)}{|B|}$ and use Proposition~\ref{ainh1bandh1}:
\begin{align*}
\||b|\frac{\chi_B}{|B|}\|_{h^1} = \|a(b - c_B)\|_{h^1} 
\lesssim \|\cM_b(a)\|_{L^1} + \|b\|_{\bmo}.
\end{align*}
\end{proof}

One interesting corollary from Proposition \ref{h1binc} and \ref{lmoequal2} is the following.

\begin{corollary} \label{lmoh1bmo}
Let $b\in \bmo(\Rn)$. Then the following are equivalent.
\begin{enumerate}
\item The function $b$ is in $\lmo(\Rn)$.
\item The function $b$ satisfies 
\begin{align*}
A_b: = \sup_{r(B)<1}\frac{1}{|B|}\||b-b_B|\chi_B\|_{h^1}+\sup_{r(B)\geq1}\frac{1}{|B|}\||b|\chi_B\|_{h^1}<\infty.
\end{align*}
\end{enumerate}
\end{corollary}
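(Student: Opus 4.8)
The plan is to prove the two implications of Corollary~\ref{lmoh1bmo} separately. Implication (1)$\Rightarrow$(2) chains together results already in hand: the proof of Theorem~\ref{h1atombish1} turns an $\hone$ atom into a multiple of an $\honeb$ atom, Proposition~\ref{h1binc} bounds $\cMb$ on $\honeb$ atoms, and Proposition~\ref{lmoequal2} converts such a bound into finiteness of $A_b$. Implication (2)$\Rightarrow$(1) is more direct and avoids the commutator maximal function entirely, following from the mean estimate in Proposition~\ref{prop:meanestimate} applied to the explicit functions $|b-b_B|\chi_B$ and $|b|\chi_B$. The case of constant $b$ is trivial --- both statements hold, since the relevant $\hone$-norms are easily bounded and constants lie in $\lmo(\Rn)$ --- so I assume throughout that $b$ is non-constant.

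For (1)$\Rightarrow$(2): let $b\in\lmo(\Rn)$ and set $\gamma:=\log 2\,\|b\|_{\BMO_{\text{loc},2}}/\|b\|_{\LMO_{\text{loc},2}}\in(0,1]$. As established inside the proof of Theorem~\ref{h1atombish1}, whenever $a$ is an $\hone$ atom with $\int a=0$ (the cancellation being automatic if the defining radius is at least $1$), the function $\gamma a$ is an approximate $\honeb$ atom. Using the positive homogeneity of $\cM$, hence of $\cMb$, Proposition~\ref{h1binc} gives $\|\cMb a\|_{\Lone}=\gamma^{-1}\|\cMb(\gamma a)\|_{\Lone}\lesssim\gamma^{-1}\|b\|_\bmo=:\kappa_b$. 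The proof of Proposition~\ref{lmoequal2} tests $\cMb$ only against atoms of the form $(s-s_B)\chi_B/|B|$ with $s=\sgn(b-b_B)$ --- which satisfy $\int a=0$ when $r(B)<1$ --- and against $\sgn(b)\chi_B/|B|$ when $r(B)\geq 1$; since the bound above applies to exactly these atoms, running that argument yields $A_b\lesssim\kappa_b<\infty$, which is (2).

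For (2)$\Rightarrow$(1): suppose $A_b<\infty$ and fix a ball $B=B(x_0,r)$. If $r<1$, the nonnegative function $g:=|b-b_B|\chi_B$ is supported in $B$, lies in $\hone(\Rn)$, and satisfies $\|g\|_{\hone}\leq A_b|B|$ by the definition of $A_b$; since $\int g=|B|\fint_B|b-b_B|$, Proposition~\ref{prop:meanestimate} gives, after dividing by $|B|$,
\[
\log(1+r^{-1})\,\fint_B|b-b_B| \lesssim A_b,
\]
and the supremum over such $B$ shows $\|b\|_{\LMOloc}\lesssim A_b$. If $r\geq 1$, the continuous inclusion $\hone(\Rn)\hookrightarrow\Lone(\Rn)$ --- immediate from the atomic characterization in Proposition~\ref{prop:h^1}, since every atom has $L^1$-norm at most $1$ --- gives $|b|_B=|B|^{-1}\||b|\chi_B\|_{\Lone}\lesssim|B|^{-1}\||b|\chi_B\|_{\hone}\leq A_b$, whence $\sup_{r(B)\geq 1}|b|_B\lesssim A_b$. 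Together these give $b\in\lmo(\Rn)$, with $\|b\|_\lmo\lesssim A_b$.

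The main obstacle is not an estimate but the bookkeeping of atom classes. A general $\hone$ atom carries only the approximate cancellation $|\int a|\leq[\log(1+r^{-1})]^{-1}$, and --- as the remark following the proof of Proposition~\ref{ainh1bandh1} indicates --- $\cMb$ is not uniformly integrable on such atoms (a factor $\log(1+r^{-1})$ intrudes), so no single scalar $\gamma$ can turn $\gamma a$ into an approximate $\honeb$ atom for all $\hone$ atoms $a$. Hence in (1)$\Rightarrow$(2) one must not invoke Proposition~\ref{lmoequal2} over its full nominal atom class, but instead note that its proof uses only atoms with exact vanishing moment on small balls --- precisely the atoms on which the scaling trick together with Proposition~\ref{h1binc} supplies the required bound. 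The remaining ingredients (the $L^2$ size bookkeeping, homogeneity of $\cMb$, and $\hone\subset\Lone$) are routine.
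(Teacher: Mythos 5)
Your proof is correct and follows essentially the same route as the paper: the forward implication chains the $\gamma a$ scaling trick from the proof of Theorem~\ref{h1atombish1} with Propositions~\ref{h1binc} and~\ref{lmoequal2}, and the converse applies Proposition~\ref{prop:meanestimate} to $|b-b_B|\chi_B$ on small balls and the inclusion $\hone(\Rn)\subset\Lone(\Rn)$ on large balls. Your additional observation --- that a fixed scalar $\gamma$ turns an $\hone$ atom into an approximate $\honeb$ atom only when the atom has exact cancellation, and that the proof of Proposition~\ref{lmoequal2} tests $\cMb$ only against such atoms --- is a valid refinement of a point the paper states loosely.
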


\begin{proof}
Suppose $b\in \lmo(\Rn)$. Then from the proof of Theorem~\ref{h1atombish1}, for  an $h^1$ atom $a$,  $\gamma a$ is an $h^1_b$ atom, hence by Proposition~\ref{h1binc} 
$$\|\cMb a\|_{\Lone}\lesssim \gamma^{-1} \|b\|_\bmo \approx \kappa_b := \frac{\|b\|_{\LMOloc}}{\|b\|_\BMOloc}\|b\|_\bmo,$$
which is exactly the hypothesis of Proposition~\ref{lmoequal2}.

Conversely, suppose $A_b < \infty$. Observe that from Proposition~\ref{prop:meanestimate}, we can estimate
\begin{align*}
\sup_{r(B)<1} \frac{\log(1+[r(B)]^{-1})}{|B|}\int_B|b(x)-b_B|dx &\lesssim  \sup_{r(B)<1} \frac{1}{|B|}\||b-b_B|\chi_B\|_{h^1}.
\end{align*}
Meanwhile, using the fact that $\|f\|_{L^1(\Rn)}\leq \|f\|_{h^1(\Rn)}$, we have
\begin{align*}
\sup_{r(B)\geq 1} \frac{1}{|B|}\int_{B}|b(x)|dx \leq \sup_{r(B)\geq1}\frac{1}{|B|}\||b|\chi_B\|_{h^1}.
\end{align*}
Thus $\|b\|_\lmo \lesssim A_b < \infty$.
\end{proof}

\section{Boundedness of the commutator of inhomogeneous singular integral operators with bmo functions}
\label{sec:commutators}
At the end of Section~\ref{sec:LCK}, we introduced a class of Calder\'on-Zygmund operators where the kernel has better decay away from the diagonal (see Definition~\ref{def:inhom}).  We will now show that the commutators of these operators with $\bmo$ functions take $\honeb$ atoms to $\Lone$.

\begin{theorem} 
\label{h1bL1}
Suppose $b\in \bmo(\Rn)$ and $T$ is a inhomogeneous singular integral operator.  Then $[b,T](a)\in L^1(\Rn)$ and 
$$\|[b,T]a\|_{L^1}\lesssim\|b\|_{\bmo}$$ 
for all $\honeb$ atoms $a$. Thus $[b,T]:\honefinb(\Rn) \mapsto L^1(\Rn)$, where $\honefinb(\Rn)$ denotes the space of finite linear combinations of $\honeb$ atoms.
\end{theorem}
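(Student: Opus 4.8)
The plan is to adapt P\'erez's argument to the localized, nonhomogeneous setting. Fix an approximate $\honeb$ atom $a$ supported in $B = B(x_0,r)$ and write, as usual,
$$[b,T]a \;=\; (b - c_B)\,Ta \;-\; T\big((b - c_B)a\big),$$
which is legitimate since $T(c_B a) = c_B Ta$ and every term is a well-defined measurable function: $Ta \in L^2(\Rn)$ by the $L^2$-boundedness of $T$, and $(b-c_B)a \in L^s(\Rn)$ for some $1 < s < 2$ by \eqref{eqn:abc}, so $T\big((b-c_B)a\big) \in L^s(\Rn)$. The second term is disposed of at once: by \eqref{eqn:honeb-abc} we have $(b-c_B)a \in \hone(\Rn)$ with $\|(b-c_B)a\|_{\hone} \lesssim \|b\|_{\bmo}$, and since an inhomogeneous singular integral operator is bounded from $\hone(\Rn)$ to $\Lone(\Rn)$ (as recalled at the end of Section~\ref{sec:LCK}), $\|T((b-c_B)a)\|_{\Lone} \lesssim \|b\|_{\bmo}$. (Alternatively this term can be estimated by hand, splitting $\Rn = 2B \cup (2B)^c$ and using that $(b-c_B)a$ is a bounded multiple of an approximate $(1,s)$ atom; the computation is the same type as the one below.)

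It remains to bound $\|(b-c_B)Ta\|_{\Lone}$, split over $2B$ and $(2B)^c$. On $2B$, Cauchy--Schwarz together with the $L^2$-boundedness of $T$, the $L^2$ size of $a$, and $\|b-c_B\|_{L^2(2B)} \lesssim \|b\|_{\bmo}|B|^{1/2}$ (a consequence of \eqref{eqn:bmop} and \eqref{eqn:doubling}) give $\int_{2B} |b - c_B|\,|Ta| \le \|b-c_B\|_{L^2(2B)}\,\|Ta\|_{L^2(\Rn)} \lesssim \|b\|_{\bmo}\,|B|^{1/2}\,\|a\|_{L^2(\Rn)} \lesssim \|b\|_{\bmo}$. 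On $(2B)^c$ one needs a pointwise decay bound on $Ta$. If $r \ge 1$, then $c_B = 0$ and no cancellation is available or needed: for $x \in (2B)^c$ and $y \in B$ one has $|x-y| \ge |x-x_0|/2 \ge 1$, so Definition~\ref{def:inhom}(1) yields $|Ta(x)| \lesssim |x-x_0|^{-n-\varepsilon}\|a\|_{\Lone} \lesssim |x-x_0|^{-n-\varepsilon}$; since every dyadic shell about $x_0$ of radius $\ge 2r \ge 1$ has $\fint|b| \lesssim \|b\|_{\bmo}$, summing the resulting geometric series gives $\int_{(2B)^c}|b|\,|Ta| \lesssim r^{-\varepsilon}\|b\|_{\bmo} \le \|b\|_{\bmo}$. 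If $r < 1$, then writing $Ta(x) = \int_B [K(x,y) - K(x,x_0)]a(y)\,dy + K(x,x_0)\int a$ and using Definition~\ref{def:inhom}(2), $\|a\|_{\Lone} \le 1$, and $|\int a| \le [\log(1+r^{-1})]^{-2}$, one gets for $x \in (2B)^c$
$$|Ta(x)| \;\lesssim\; \frac{r^{\delta}}{|x - x_0|^{n+\delta}} \;+\; \frac{\min\{|x-x_0|^{-n},\,|x-x_0|^{-n-\varepsilon}\}}{[\log(1+r^{-1})]^{2}}.$$
Multiplying by $|b - c_B|$ and integrating over $(2B)^c$: the first term contributes $\lesssim \|b\|_{\bmo}$ by \eqref{eqn:bintegral}; for the second term one splits $(2B)^c$ into the annulus $\{2r < |x-x_0| < 1\}$, where $\int \frac{|b-c_B|}{|x-x_0|^n} \lesssim \|b\|_{\bmo}[\log(1+r^{-1})]^{2}$ by the $\delta = 0$, $p = 1$ version of the computation in the proof of \eqref{eqn:bintegral}, and the region $\{|x-x_0| \ge 1\}$, where $\int \frac{|b-c_B|}{|x-x_0|^{n+\varepsilon}} \lesssim (1 + \log r^{-1})\|b\|_{\bmo}$ (the shell sum now converging). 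In both cases the prefactor $[\log(1+r^{-1})]^{-2}$ absorbs the loss, so the second term is $\lesssim \|b\|_{\bmo}$ as well. This gives $\|[b,T]a\|_{\Lone} \lesssim \|b\|_{\bmo}$, and for a finite combination $f = \sum_{j=1}^N \lambda_j a_j$ of $\honeb$ atoms, linearity of $[b,T]$ on such sums yields $\|[b,T]f\|_{\Lone} \le \sum_{j=1}^N |\lambda_j|\,\|[b,T]a_j\|_{\Lone} \lesssim \|b\|_{\bmo}\sum_{j=1}^N |\lambda_j|$; taking the infimum over all finite atomic representations gives $[b,T] : \honefinb(\Rn) \to \Lone(\Rn)$.

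The main obstacle is the far estimate for $(b-c_B)Ta$ on $(2B)^c$ in the case $r < 1$, where the two distinctive hypotheses enter essentially: the enhanced off-diagonal decay $|x-y|^{-n-\varepsilon}$ of an inhomogeneous kernel (stronger than that of a general Calder\'on--Zygmund kernel) is what makes the integral over $\{|x-x_0| \ge 1\}$ converge, and the squared logarithm in the cancellation condition $|\int a| \le [\log(1+r^{-1})]^{-2}$ of Definition~\ref{def:Approxh1bAtom} is calibrated precisely to cancel the $[\log(1+r^{-1})]^{2}$ loss coming from integrating $|b-c_B|\,|x-x_0|^{-n}$ over the annulus $\{2r < |x-x_0| < 1\}$ (cf.\ the remark following Proposition~\ref{ainh1bandh1}). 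Everything else --- the local estimates and the bound on $T((b-c_B)a)$ --- is routine given what has been developed in Sections~\ref{sec:prelim} and \ref{sec:LCK}.
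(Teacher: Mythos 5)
Your proof is correct, and its skeleton --- the decomposition $[b,T]a=(b-c_B)Ta-T((b-c_B)a)$, the split over $2B$ and $(2B)^c$, and the key far-field estimate of $(b-c_B)Ta$ via kernel smoothness and \eqref{eqn:bintegral} --- coincides with the paper's. You differ in two sub-steps, both legitimately. First, the paper estimates $T((b-c_B)a)$ by hand ($L^{3/2}$ boundedness on $2B$, then kernel smoothness/decay plus the cancellation $|\int a(b-c_B)|\le C_b/\log(1+r^{-1})$ on $(2B)^c$), whereas you dispatch it in one line from \eqref{eqn:honeb-abc} and the $\hone\to\Lone$ boundedness of inhomogeneous singular integral operators; this is cleaner and is exactly the statement recalled at the end of Section~\ref{sec:LCK}, applied to the single approximate $(1,s)$ atom $(b-c_B)a$. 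Second, for $(b-c_B)Ta$ on $(2B)^c$ with $r<1$ the paper first invokes Proposition~\ref{prop:decomoph1b} to reduce to atoms with $\int a=0$, so the term $K(x,x_0)\int a$ never appears; you instead keep the approximate cancellation and show that the $[\log(1+r^{-1})]^2$ loss from $\int_{2r<|x-x_0|<1}|b-b_B|\,|x-x_0|^{-n}$ is exactly absorbed by the hypothesis $|\int a|\le[\log(1+r^{-1})]^{-2}$, with the extra decay $|x-y|^{-n-\varepsilon}$ handling the region $|x-x_0|\ge1$. Your route makes transparent why the squared logarithm appears in Definition~\ref{def:Approxh1bAtom} (the paper only hints at this in the remark after Proposition~\ref{ainh1bandh1}), at the cost of a slightly longer far-field computation; the paper's route is shorter at that point but leans on the decomposition proposition. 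Both are complete arguments.
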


\begin{proof}
Let $a$ be an $\honeb$ atom with $\supp(a)\subset B=B(x_0,r)$. 
Note that we can write the commutator acting on $a$ as 
$[b,T](a)= (b-c_B)T(a)-T(a(b-c_B))$. 

We first show that $T(a(b-c_B))\in L^1(\Rn)$.  By the $L^p$ boundedness of $T$ for $p = \frac 3 2$, we have
\begin{align*}
\|T(a(b-c_B))\|_{L^1(2B)} &\leq |2B|^{\frac13} \|T(a(b-c_B))\|_{L^{\frac32}(2B)} \\
&\lesssim |B|^{\frac13} \|T\|_{L^{\frac32}\mapsto L^{\frac32}} \|a(b-c_B)\|_{L^{\frac32}(\Rn)}\\
&\lesssim  \|T\|_{L^{\frac32}\mapsto L^{\frac32}}|B|^{\frac13}\|b-c_B\|_{L^6(B)}\|a\|_{L^2(B)}\\
&\lesssim  \|T\|_{L^{\frac32}\mapsto L^{\frac32}}|B|^{-\frac16}\|b-c_B\|_{L^6(B)}\lesssim  \|T\|_{L^{\frac32}\mapsto L^{\frac32}}\|b\|_{\bmo,6}.
\end{align*}

Looking at $x \in (2B)^c$,  first consider the case $r<1$.  From Proposition~\ref{prop:decomoph1b}, we may assume $\int a = 0$.  Since $x \not\in \supp(a(b-b_B))$, we can use the kernel representation to write
\begin{align*}
&\|T(a(b-b_B))\|_{L^1((2B)^c)}\\
&\leq \int_{(2B)^c} \bigg|\int_{B}K(x,y)a(y)(b(y)-b_B)dy \bigg|dx \\
&\leq \int_{(2B)^c} \bigg|\int_{B}[K(x,y)-K(x,x_0)]a(y)(b(y)-b_B)dy \bigg|dx 
+ \int_{(2B)^c} |K(x,x_0)| \bigg|\int ab \bigg|dx\\
&=: I +II.
\end{align*}
As $2|y - x_0| < 2r \leq |x - x_0|$, we can use the smoothness of $K$ (condition (2) in Definition~\ref{def:inhom}) to estimate
\begin{align*}
I&\lesssim \int_{(2B)^c}\int_{B} \frac{|y-x_0|^{\delta}}{|x-x_0|^{n+\delta}} |a(y)||b(y)-b_B|dydx \\
&\lesssim \int_{(2B)^c}\frac{r^{\delta}}{|x-x_0|^{n+\delta}}\int_{B} |a(y)||b(y)-b_B|dy dx \\
&\leq \int_{(2B)^c}\frac{Cr^{\delta}}{|x-x_0|^{n+\delta}} dx \|b\|_{\bmo,2}\\
&\lesssim\|b\|_{\bmo}.
\end{align*}
Using the decay property of $K$ (condition (2) in Definition~\ref{def:inhom}) and the cancellation of $a$ against $b$, recalling that we chose $C_b =  \|b\|_{\bmo,2}$, we have
\begin{align*}
II &\leq  \int_{(2B)^c} |K(x,x_0)| \bigg|\int ab \bigg|dx \\
&\lesssim \bigg(\int_{2r\leq |x-x_0|\leq 1}\frac{1}{|x-x_0|^n}dx + \int_{|x-x_0|\geq 1}\frac{1}{|x-x_0|^{n+\varepsilon}}dx\bigg)\frac{C_b}{\ln(1+r^{-1})} \\
&\lesssim  \|b\|_{\bmo,2}\frac{\ln(1+r^{-1})+1}{\ln(1+r^{-1})} \lesssim  \|b\|_{\bmo}.
\end{align*}

When $r\geq 1$, using the fact that $2|y-x_0|\leq |x-x_0|$ implies $|x-y|\geq \frac{1}{2}|x-x_0|$, the decay of $K$ again gives
\begin{align*}
\|T(a(b-c_B))\|_{L^1((2B)^c)}&\leq \int_{(2B)^c} \bigg|\int_{B}K(x,y)a(y)b(y)dy \bigg|dx \\
&\lesssim \int_{|x-x_0|\geq 2r} \frac{1}{|x-x_0|^{n+\varepsilon}}  \|b\|_{\bmo,2}\\
&\lesssim \int_{|x-x_0|\geq 2} \frac{1}{|x-x_0|^{n+\varepsilon}}  \|b\|_{\bmo,2} \lesssim \|b\|_{\bmo}.
\end{align*}

Now we can handle the term $(b-c_B)T(a)$. Using the boundedness of $T$ on $L^2$, we have 
\begin{align*}
\|(b-c_B)T(a)\|_{L^1(2B)} &\leq \|b-c_B\|_{L^2(2B)}\|T(a)\|_{L^2(\Rn)}\\
&\lesssim \|b\|_{\bmo,2} |B|^{\frac12} \|T\|_{L^2\mapsto L^2} \|a\|_{L^2(\Rn)} \\
&\lesssim \|b\|_{\bmo}\|T\|_{L^2\mapsto L^2}.
\end{align*}
When $r<1$, we apply the cancellation of $a$, the smoothness of $K$ and \eqref{eqn:bintegral} to get
\begin{align}
\nonumber
\|(b-b_B)T(a)\|_{L^1((2B)^c)} &\leq \int_{(2B)^c} |b(x)-b_B| \bigg|\int_{B}[K(x,y)-K(x,x_0)]a(y)dy\bigg|dx \\
\label{eqn:bbBTa}
&\leq  \int_{(2B)^c} |b(x)-b_B|\frac{r^{\delta}}{|x-x_0|^{n+\delta}}dx \lesssim\|b\|_{\bmo}.
\end{align}
When $r\geq 1$, similar to the estimate of $\|T(a(b-c_B))\|_{L^1((2B)^c)}$, we have, again by  \eqref{eqn:bintegral},
\begin{align}
\nonumber
\|(b-c_B)T(a)\|_{L^1((2B)^c)} 
\nonumber
&\leq \int_{(2B)^c} |b(x)| \bigg|\int_{B}K(x,y)a(y)dy\bigg|dx \\
\nonumber
&\leq \int_{(2B)^c} \frac{|b(x)|}{|x-x_0|^{n+\varepsilon}} dx \\
\label{eqn:rgeq1}
&\leq \int_{(2B)^c} r^{\varepsilon}\frac{|b(x) - c_B|}{|x-x_0|^{n+\varepsilon}} dx \lesssim \|b\|_{\bmo}.
\end{align}

We have thus shown $\|[b,T]a\|_{L^1}\lesssim\|b\|_{\bmo}$ for all $\honeb$ atoms $a$. Finally, $f \in \honefinb(\Rn)$ means it can be written as a finite linear combination $\sum_{k} \gamma_k a_k$ of $\honeb$ atoms, so by linearity and taking the infimum over all such decompositions, we have
$$\|[b,T]f\| \lesssim \|b\|_{\bmo} \inf \sum_{k} |\gamma_k|.$$
\end{proof}

We cannot extend the conclusion to all of $\honeatomb(\Rn)$ since we do not know whether for $f \in \honefinb(\Rn)$, the infimum $\inf \sum_{k} |\gamma_k|$ taken over all finite linear combinations of atoms is comparable to $\|f\|_\honeatomb$, the infimum taken over all possible atomic decomposition.

When $\honeatomb(\Rn) = \hone(\Rn)$, it is true that the infimum over the finite linear combinations of atoms is comparable to the $\hone$ norm, and it suffices to verify the boundedness on one $\hone$ atom in order to get a bounded extension to the whole space (see \cite[Theorem 3.1, Corollary 3.4]{MSV1} for the case of $\Hone$ and \cite[Proposition 7.1]{HYY} for the case of $\hone$ on a space of homogeneous type).  This gives us the following corollary.

\begin{corollary} \label{h1L1lmo1}
Let $b\in \lmo(\Rn)$, and $T$ be a inhomogeneous singular integral operator. Then there exists a unique extension $L_b$ of $[b,T]$ that is bounded from $\hone(\Rn)$ to $L^1(\Rn)$. 
\end{corollary}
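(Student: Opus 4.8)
The plan is to deduce this from Theorem~\ref{h1bL1} and Theorem~\ref{h1atombish1}. The point is that since $b \in \lmo(\Rn)$ we have $\honeatomb(\Rn) = \hone(\Rn)$ by Theorem~\ref{h1atombish1}, so matters can be reduced to ordinary $\hone$ atoms, for which $\hone$-boundedness of a linear map follows from uniform boundedness on atoms: on $\Rn$ the finite atomic norm built from $(1,2)$-atoms is comparable to the $\hone$ norm (the $\hone$ analogue, contained in \cite[Proposition 7.1]{HYY}, of \cite[Theorem 3.1, Corollary 3.4]{MSV1}).

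First I would record that, since $b \in \lmo(\Rn)$, the proof of Theorem~\ref{h1atombish1} produces a constant $\gamma = \gamma(b) \in (0,1]$, namely $\gamma = \log 2\,\|b\|_{\BMO_{\text{loc},2}}/\|b\|_{\LMO_{\text{loc},2}}$, such that $\gamma a$ is an $\honeb$ atom whenever $a$ is an $L^2$-normalized $\hone$ atom supported in a ball $B = B(x_0,r)$ with $\int a = 0$ when $r < 1$. Applying Theorem~\ref{h1bL1} to $\gamma a$ gives
\[
\|[b,T]a\|_{L^1} = \gamma^{-1}\,\|[b,T](\gamma a)\|_{L^1} \lesssim \gamma^{-1}\|b\|_{\bmo},
\]
with a constant independent of $a$. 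Here $[b,T](a) = (b-c_B)T(a) - T(a(b-c_B))$ is well defined for such $a$ (which lies in $L^2$ with compact support), exactly as in the proof of Theorem~\ref{h1bL1}; hence $[b,T]$ is a well-defined linear map on the space $h^1_{\mathrm{fin}}(\Rn)$ of finite linear combinations of $\hone$ atoms, and for $f = \sum_k \lambda_k a_k \in h^1_{\mathrm{fin}}(\Rn)$,
\[
\|[b,T]f\|_{L^1} \le \sum_k |\lambda_k|\,\sup_a \|[b,T]a\|_{L^1} \lesssim \gamma^{-1}\|b\|_{\bmo}\,\inf \sum_k |\lambda_k|,
\]
the infimum being over all finite atomic decompositions of $f$.

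Next I would invoke the equivalence $\inf \sum_k |\lambda_k| \approx \|f\|_{\hone}$ on $h^1_{\mathrm{fin}}(\Rn)$, valid since $\Rn$ with Lebesgue measure is a space of homogeneous type and the atoms have exponent $q = 2 < \infty$. Combined with the previous bound, $\|[b,T]f\|_{L^1} \lesssim \gamma^{-1}\|b\|_{\bmo}\|f\|_{\hone}$ for all $f \in h^1_{\mathrm{fin}}(\Rn)$. Since every $f \in \hone(\Rn)$ is an $\hone$-limit of partial sums of its atomic decomposition, $h^1_{\mathrm{fin}}(\Rn)$ is dense in $\hone(\Rn)$, so the bounded linear map $[b,T]\colon (h^1_{\mathrm{fin}}(\Rn), \|\cdot\|_{\hone}) \to L^1(\Rn)$ has a unique bounded linear extension $L_b\colon \hone(\Rn) \to L^1(\Rn)$; uniqueness follows from density and continuity.

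I expect the only delicate point to be the appeal to the finite-atomic-norm equivalence: one must check that \cite[Proposition 7.1]{HYY} and \cite[Theorem 3.1, Corollary 3.4]{MSV1} indeed apply to $L^2$-normalized $\hone$ atoms on $\Rn$, and that the a priori formula used to define $[b,T]$ on $h^1_{\mathrm{fin}}(\Rn)$ is consistent (independent of the chosen atomic decomposition) and is the one $L_b$ is required to extend. Everything else is a formal consequence of Theorems~\ref{h1bL1} and \ref{h1atombish1}.
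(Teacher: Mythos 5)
Your proposal is correct and follows essentially the same route as the paper: the authors also combine Theorem~\ref{h1bL1} (uniform $L^1$ bounds on $\honeb$ atoms, hence on $\gamma$-multiples of ordinary $\hone$ atoms via Theorem~\ref{h1atombish1}) with the finite-atomic-norm equivalence of \cite[Theorem 3.1, Corollary 3.4]{MSV1} and \cite[Proposition 7.1]{HYY} to extend by density. Your explicit tracking of the constant $\gamma^{-1}\|b\|_{\bmo}$ and the well-definedness of $[b,T]$ on finite atomic sums just fills in details the paper leaves to the remark preceding the corollary.
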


In particular, for $b\in \lmo(\Rn)$, we can apply this corollary to show the boundedness of the commutators $[b, \cRjeta]$ and $[b, \cRjpsi]$ defined in Corollary~\ref{localriesz}, provided the resulting localized convolution kernels have extra decay away from the origin.  This requires some extra assumptions on $\eta$ and $\psi$ in addition to those in Theorem~\ref{Mainthm1}. 

We can also compare this corollary with the results of \cite{HungKy}.  While our assumption $b \in \lmo$ is stronger than their assumption $b \in \LMO_\infty$, which allows the oscillation to grow on large balls, and they obtain $\hone$ to $\hone$ boundedness, the operators they consider, which are pseudo-differential operators, have kernels which have much better decay and smoothness (see \cite[Proposition 2.1]{HungKy}).

In order to get $\hone$ to $\hone$ boundedness of $[b,T]$, we need to impose some approximate cancellation conditions.  
We follow the ideas in \cite[Definition 5.1]{DLPV1}.
\begin{definition} 
\label{T*def}
Suppose $T$ is an inhomogeneous singular integral operator.  Given a ball $B$,  denote by $L^{2}_{0}(\Rn)$ those $g \in L^2(B)$  with $\int g = 0$.  For $b\in \bmo$, define $T^{*}_B(b)$, relative to this ball $B$, in the distributional sense, by
		\begin{equation} 
		\label{T*}
			\langle T^{*}_B(b), g \rangle :=  \int_{\Rn}(b - b_B)Tg(x)dx, \quad \quad \forall\, g \,\in L^{2}_{0}(B). 
		\end{equation} 
\end{definition}
As in \eqref{eqn:bbBTa}, the conditions on the kernel and the vanishing integral of $g$ guarantee that the integral on the right-hand-side of \eqref{T*} converges absolutely and $T^{*}_B(b)$ is a bounded linear functional on $L^{2}_{0}(B)$ with norm bounded by a constant multiple of $\|b\|_\bmo|B|^{1/2}$.  Thus we can identify $T^{*}_B(b)$ with an equivalence class of functions in $L^2(B)$ modulo constants (similarly to the case $p = 1$ in \cite[Remark 5.4]{DLPV1}).  In particular, we can impose the following condition, denoting $f:= T^{*}_B(b)$, without ambiguity:
\begin{equation}
\label{T*b}
\left(\fint_{B}|f - f_B|^2\right)^{1/2}\leq \log(1 + r(B)^{-1}).
\end{equation}
 Similarly to what was pointed out in \cite[Remark 5.4]{DLPV1} for the case of $T^*((\cdot - x_B)^\alpha)$, this condition is not the same as requiring $T^*(b) \in \LMOloc(\Rn)$ since $T^{*}_B(b)$ is defined relative to a specific ball $B$, using the $L^2$ pairing, not the pairing of $\bmo$ and $\hone$, and we are imposing condition \eqref{T*b} only for that ball.
 
While the local Campanato-type conditions used in \cite[Theorem 5.3]{DLPV1} to guarantee that  inhomogeneous singular integral operators are bounded on $h^p(\Rn)$ are also expressed in terms of individual balls,  in the case $p = 1$ we do not need to subtract any constant as the power of the monomial is $\alpha = 0$.  Thus the condition becomes $T^*1 \in \LMOloc(\Rn)$, where $T^*$ can be taken in the sense of duality of $\bmo$ and $\hone$.  In general, for $b$ bounded, the pairing between $\bmo$ and $\hone$ can be given by integration, but for arbitrary $b \in \bmo$,  we need to subtract $b_B$ in the integral in \eqref{T*}, unless we have $\int Tg = 0$.  This is the case when $T$ is a convolution operator, as $Tg$ inherits the cancellation from $g$ (see also the remarks following Corollary~\ref{localriesz}).

\begin{theorem} 
\label{thm: h1bh1}
Let $b\in \bmo(\Rn)$. Suppose $T$ is an inhomogeneous singular integral operator. If  $T^*(1) \in \LMOloc(\Rn)$, and, for any ball $B$ with $r(B)<1$, $f:= T^{*}_B(b)$ satisfies \eqref{T*b}, then
$$\|[b,T](a)\|_{\hone}\leq C(1+\|b\|_{\bmo})$$
for all $\honeb$ atoms $a$.
\end{theorem}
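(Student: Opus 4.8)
The plan is to show that, up to the factor $C(1+\|b\|_\bmo)$, the function $[b,T](a)$ is a finite sum of $\hone$ molecules (in the sense of Definition~\ref{def:molecule}) plus an $\ell^1$-combination of $\hone$ atoms, and then invoke the molecular and atomic characterizations of $\hone(\Rn)$. By Proposition~\ref{prop:decomoph1b} it is enough to bound $\|[b,T](a)\|_\hone$ for (i) $\honeb$ atoms $a$ with $r:=r(B)<1$ and $\int a=0$, and (ii) $\honeb$ atoms with $r(B)\geq 1$. In both cases I would start from the algebraic identity $[b,T](a)=(b-c_B)T(a)-T(a(b-c_B))$. Case (ii) carries no moment conditions: using $\fint_B|b|^2\lesssim\|b\|_\bmo^2$ for $r(B)\geq 1$ one sees, as in the proof of Theorem~\ref{h1bL1}, that $a(b-c_B)$ and $a$ are bounded multiples of $\hone$ atoms, that $T$ of each is a bounded multiple of an $\hone$ molecule adapted to $B$ (the mean condition M3 being automatic since $[\log(1+r(B)^{-1})]^{-1}\gtrsim 1$), and that multiplying by $b$ and splitting over the dyadic annuli $2^{j+1}B\setminus 2^jB$ produces a sum of $\hone$ atoms whose coefficients are summable with total $\lesssim\|b\|_\bmo$, thanks to the $|x-y|^{-n-\varepsilon}$ decay in Definition~\ref{def:inhom} (yielding $2^{-j\varepsilon}$) and \eqref{eqn:doubling}.

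For case (i) the term $T(a(b-c_B))$ is easy: by \eqref{eqn:honeb-abc}, $a(b-c_B)\in\hone$ with $\|a(b-c_B)\|_\hone\lesssim\|b\|_\bmo$, and since $T^*(1)\in\LMOloc(\Rn)$ the boundedness of $T$ on $\hone(\Rn)$ (\cite{DLPV1}) gives $\|T(a(b-c_B))\|_\hone\lesssim\|b\|_\bmo$. The work is entirely in $(b-c_B)T(a)$. First I would verify that $T(a)$ is a bounded multiple of an $\hone$ molecule associated with $B$: M1 from $L^2$-boundedness; M2 from the kernel smoothness (2) of Definition~\ref{def:inhom} together with $\int a=0$, so that $|T(a)(x)|\lesssim r^\delta|x-x_0|^{-n-\delta}$ off $2B$; and M3 from $|\int T(a)|=|\langle T^*1-(T^*1)_B,\,a\rangle|\lesssim\|T^*1\|_{\LMOloc}[\log(1+r^{-1})]^{-1}$, using the cancellation of $a$ and the John--Nirenberg inequality for $\LMO$. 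Then I would write $T(a)=\sum_{j\geq 0}(Ta)\chi_{A_j}$ with $A_0=2B$, $A_j=2^{j+1}B\setminus 2^jB$, multiply by $b-c_B$, and for each $j$ set $\nu_j:=\int(b-c_B)(Ta)\chi_{A_j}$ and write $(b-c_B)(Ta)\chi_{A_j}=\lambda_j g_j+\nu_j\eta_{2^{j+1}B}$, where $\eta_E:=\chi_E/|E|$ and $g_j$ is (after this mean correction) a fixed $(1,s)$ atom, $1<s<2$. Cauchy--Schwarz, the $\bmo$ estimates \eqref{eqn:bmop} and \eqref{eqn:doubling} for the oscillation of $b$ across the annuli, and the molecular decay of $T(a)$ give $|\lambda_j|+|\nu_j|\lesssim(j+1)\|b\|_\bmo 2^{-j\delta}$, which is summable; hence $\sum_j\lambda_j g_j\in\hone$ with norm $\lesssim\|b\|_\bmo$. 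For the correction bumps the telescoping identity $\eta_{2^{j+1}B}=\eta_{2B}+\sum_{i=1}^{j}(\eta_{2^{i+1}B}-\eta_{2^iB})$---each difference being a bounded multiple of an $\hone$ atom, and $\sum_{j\geq i}|\nu_j|\lesssim(i+1)\|b\|_\bmo 2^{-i\delta}$---leaves only the single leftover term $\big(\sum_j\nu_j\big)\eta_{2B}$.

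Since $\sum_j\nu_j=\int_{\Rn}(b-c_B)T(a)$ and $\|\eta_{2B}\|_\hone\lesssim\log(1+r^{-1})$ (as in the proof of Proposition~\ref{prop:decomoph1b}), the whole theorem reduces to the estimate
$$\Big|\int_{\Rn}(b-c_B)T(a)\Big|\ \lesssim\ \frac{1+\|b\|_\bmo}{\log(1+r^{-1})},$$
and this is the main obstacle and the sole point at which the hypotheses on $T^{*}_B(b)$ are used. Because $a\in L^{2}_{0}(B)$, definition \eqref{T*} identifies $\int_{\Rn}(b-c_B)T(a)$ with $\langle T^{*}_B(b),a\rangle=\langle f-f_B,a\rangle$ for $f=T^{*}_B(b)$; condition \eqref{T*b} is precisely tailored so that, after Cauchy--Schwarz against the $L^2$-normalized atom (and, where needed, the approximate cancellation $|\int ab|\lesssim C_b[\log(1+r^{-1})]^{-1}$ of the $\honeb$ atom in place of the crude $\bmo$ bound on $f-f_B$), this pairing is of order $[\log(1+r^{-1})]^{-1}$. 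Assembling the three parts---$T(a(b-c_B))$, the $\hone$-atom pieces $\sum_j\lambda_j g_j$ coming from $(b-c_B)T(a)$, and the telescoped bumps---gives $\|[b,T](a)\|_\hone\lesssim 1+\|b\|_\bmo$. The technical points requiring care are the uniform-in-$j$ summability of $\lambda_j$ and $\nu_j$ across scales (the logarithmic loss carried by the telescoped bumps must be absorbed by the $2^{-j\delta}$ decay), the bookkeeping of constants $c_B$ versus $b_B$ when some $2^jB$ has radius $\geq 1$, and the interplay in the final display between \eqref{T*b} and the atom's cancellation against $b$.
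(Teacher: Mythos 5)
Your proposal is correct in substance and makes the same key reduction as the paper, but the way you handle $(b-c_B)T(a)$ is genuinely different, and a bit longer than necessary.

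What is the same: both proofs start from $[b,T](a)=(b-c_B)T(a)-T(a(b-c_B))$; both dispose of $T(a(b-c_B))$ identically by combining \eqref{eqn:honeb-abc} with the $\hone$-boundedness of $T$ from $T^*(1)\in\LMOloc$; both isolate, as the single point where the $T^*_B(b)$ hypothesis enters, the estimate $\big|\int(b-c_B)T(a)\big|=|\langle T^*_B(b),a\rangle|\lesssim [\log(1+r^{-1})]^{-1}$, via Cauchy--Schwarz against the $L^2$-normalized atom. (The paper's displayed \eqref{T*b} as printed would only yield the bound $\log(1+r^{-1})$; you are implicitly reading the reciprocal, which is clearly the intended ``$\LMO$-type'' normalization, consistent with the M3 requirement.) What is different: the paper shows directly that $M=(b-c_B)T(a)$ is a bounded multiple of a $(\tfrac32,\tfrac n2+\mu)$ molecule, $0<\mu<\tfrac32\min(\delta,\varepsilon)$, verifying M1 from $L^{3/2}$ boundedness, M2 from the pointwise kernel estimate $|Ta(x)|\lesssim r^\delta|x-x_0|^{-n-\delta}$ plus \eqref{eqn:bintegral}, and M3 from $\langle T^*_B(b),a\rangle$ --- and then invokes the molecular characterization. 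You instead build an explicit atomic decomposition of $(b-c_B)T(a)$ over the annuli $A_j$, with mean corrections and a telescoping of the bumps $\eta_{2^{j+1}B}$, terminating in the single term $(\sum_j\nu_j)\eta_{2B}$ whose $\hone$-norm (of order $\log(1+r^{-1})$) is precisely offset by the $[\log(1+r^{-1})]^{-1}$ estimate on $\int(b-c_B)T(a)$. Your route is essentially a hand re-derivation of the molecular bound; it makes the role of the hypothesis \eqref{T*b} very transparent (it controls exactly the ``total mass'' term left over by the telescoping), but at the cost of the summability bookkeeping you flag (the $(j+1)2^{-j\delta}$ coefficients, the switch in decay from $r^\delta|x-x_0|^{-n-\delta}$ to $|x-x_0|^{-n-\varepsilon}$ once $2^jr\gtrsim 1$, and $c_{2^jB}$ versus $b_B$). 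Two small remarks: your verification of M3 for $T(a)$ (via $T^*1\in\LMOloc$) is never actually used --- only the pointwise decay (M2) of $T(a)$ enters your annular decomposition, since the ``mass'' of $(b-c_B)T(a)$ is handled by $T^*_B(b)$ rather than by $T^*1$; and the hedge about substituting the atom's $b$-cancellation for the ``crude $\bmo$ bound on $f-f_B$'' is unnecessary --- in this part of the proof the $b$-cancellation of $a$ is not needed, \eqref{T*b} alone suffices.
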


\begin{proof}
Let $a$ be an $\honeb$ atom supported in a ball $B=B(x_0,r)$; as above, we may assume that when $r < 1$, $\int a = 0$.
Again write
$[b,T](a)= (b-c_B)T(a)-T(a(b-c_B))$. 

As explained before the statement of the theorem, the assumption $T^*(1)\in \LMOloc(\Rn)$ allows us to apply \cite[Theorem 5.3]{DLPV1}  get  the boundedness of $T$ on $\hone$; thus from
 \eqref{eqn:honeb-abc} we get
$$\|T(a(b-c_B))\|_{\hone}\lesssim \|a(b-c_B)\|_{\hone} \lesssim \|b\|_\bmo.$$ 

It remains to estimate the term $\|(b-c_{B})T(a)\|_{\hone}$.  We will show that $M = (b-c_{B})T(a)$ is a multiple of an $(s,\lambda)$ molecule (see Definition~\ref{def:molecule}) associated to the ball $2B$, where we choose 
$$s=\frac32,\quad \lambda= \frac{n}{2}+ \mu, \quad 0 < \mu < \frac 3 2 \min(\delta,\varepsilon).$$

We first verify condition M1. Proceeding as in  \eqref{eqn:abc} and using the boundedness of $T$ on $L^{2}$, we have
\begin{align*}
\|(b-b_{B})T(a)\|_{L^{\frac{3}{2}}(2B)} & \lesssim\|(b-b_{B})\|_{L^6(2B)} \|T(a)\|_{L^2}\\
& \lesssim \|b-b_{B}\|_{L^6(2B)}\|a\|_{L^2}\\
&\lesssim \|b\|_{\bmo,6} r^{-\frac{n}{3}}.
\end{align*}
Next, we need to show condition M2:
$\ds \int_{(2B)^c}|M(x)|^{\frac32}|x-x_0|^{\lambda}dx \lesssim r^{\mu}$.  Suppose that $r < 1$.
As in \eqref{eqn:bbBTa}, using the cancellation on $a$ and the smoothness of the kernel to bound $T(a)$, and \eqref{eqn:bintegral}, we have 
\begin{align*}
&\int_{(2B)^c}|(b(x)-b_{B})T(a)(x)|^{\frac32}|x-x_0|^{\lambda}dx \\
&\lesssim \int_{(2B)^c} |b(x)-b_{B}|^{\frac32} \frac{r^{\frac{3}{2}\delta}}{|x-x_0|^{\frac{3n}{2}+\frac{3\delta}{2}} }|x-x_0|^{\lambda}dx 
 =  r^{\frac32 \delta} \int_{(2B)^c}\frac{ |b(x)-b_{B}|^{\frac32}}{|x-x_0|^{n+\frac{3}{2}\delta-\mu}}dx \\
&\lesssim \|b\|_{\bmo}^{\frac32} r^{\mu}.
\end{align*}
Finally, condition \eqref{T*b} on $f = T_B^*(b)$ gives us
\begin{align*}
\bigg|\int_{\Rn} [b(x)-b_{B}]T(a)(x)dx\bigg|  & = \bigg|\langle T^{*}_B(b), a \rangle\bigg| 
 \leq \left(\int_{B}|f - f_B|^2\right)^{1/2}\|a\|_{L^2(B)} \\
& \leq \log(1 + r(B)^{-1}).
\end{align*}

Now looking at the case $r \geq 1$, we can proceed as in \eqref{eqn:rgeq1} to write
\begin{align*}
&\int_{(2B)^c}|(b(x)-c_{B})T(a)(x)|^{\frac32}|x-x_0|^{\lambda}dx \\
&\lesssim \int_{(2B)^c} \frac{|b(x)-c_{B}|^{\frac32}}{|x-x_0|^{{\frac32}n+{\frac32}\varepsilon}} |x-x_0|^{\lambda}dx \\
&\leq r^{\frac{3}{2}\varepsilon}  \int_{(2B)^c}\frac{ |b(x)-c_{B}|^{\frac32}}{|x-x_0|^{n+\frac{3}{2}\varepsilon-\mu}}dx \\
&\lesssim \|b\|_\bmo^{\frac32} r^{\mu}.
\end{align*}
Here we have used \eqref{eqn:bintegral} as well as the fact that $r \geq 1$, $\mu < \frac32\varepsilon$.
\end{proof}

Similarly to Corollary \ref{h1L1lmo1}, when $\honeb = \hone$, the boundedness extends from one atom to all functions in the space.

\begin{corollary}
Assuming the hypotheses of Theorem~\ref{thm: h1bh1}, if in addition $b\in \lmo(\Rn)$, then $[b,T]$ is bounded from $\hone(\Rn)$ to $\hone(\Rn)$. 
\end{corollary}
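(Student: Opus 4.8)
The plan is to obtain this exactly as Corollary~\ref{h1L1lmo1} was obtained from Theorem~\ref{h1bL1}: combine the boundedness of $[b,T]$ on $\honeb$ atoms furnished by Theorem~\ref{thm: h1bh1} with the identification $\honeatomb(\Rn) = \hone(\Rn)$ of Theorem~\ref{h1atombish1}, and then invoke the principle that an operator which is uniformly bounded on $\hone$ atoms, and whose natural domain contains the dense subspace of finite linear combinations of such atoms (on which the finite atomic norm is comparable to $\|\cdot\|_{\hone}$), extends uniquely to a bounded operator on $\hone(\Rn)$.

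First I would pass from $\honeb$ atoms to ordinary $\hone$ atoms. As shown in the proof of Theorem~\ref{h1atombish1}, the hypothesis $b \in \lmo(\Rn)$ supplies a constant $\gamma = \gamma(b) \in (0,1]$, namely $\gamma = \log 2\,\|b\|_{\BMO_{\text{loc},2}}/\|b\|_{\LMO_{\text{loc},2}}$, such that $\gamma a$ is an $\honeb$ atom whenever $a$ is an $\hone$ atom (with the understanding that an $\hone$ atom supported in a ball of radius $\geq 1$ is already an $\honeb$ atom, since then condition (2) of Definition~\ref{def:Approxh1bAtom} forces condition (3)). By Theorem~\ref{thm: h1bh1} and linearity it follows that
$$\|[b,T](a)\|_{\hone} = \gamma^{-1}\,\|[b,T](\gamma a)\|_{\hone} \leq \gamma^{-1} C\,(1+\|b\|_{\bmo})$$
for every $\hone$ atom $a$. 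Hence $[b,T]$, which is defined by linearity on the space of finite linear combinations of $\hone$ atoms and maps it into $\hone(\Rn)$ by Theorem~\ref{thm: h1bh1}, is uniformly bounded on atoms with values in $\hone(\Rn)$.

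Now, because $\honeatomb(\Rn) = \hone(\Rn)$ by Theorem~\ref{h1atombish1}, the infimum of $\sum_k|\gamma_k|$ taken over all \emph{finite} decompositions $f = \sum_k \gamma_k a_k$ of $f$ into $\hone$ atoms is comparable to $\|f\|_{\hone}$ — this is, in the present local setting, the content of \cite[Theorem 3.1, Corollary 3.4]{MSV1} and \cite[Proposition 7.1]{HYY}, and is exactly the fact already used in the proof of Corollary~\ref{h1L1lmo1}. Since finite atomic combinations are dense in $\hone(\Rn)$ and $\hone(\Rn)$ is a Banach space, the restriction of $[b,T]$ to this dense subspace extends uniquely to a bounded linear operator $\hone(\Rn)\to\hone(\Rn)$ of norm $\lesssim\gamma^{-1}(1+\|b\|_{\bmo})$. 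This extension agrees with $[b,T]$ on finite atomic combinations and is continuous into $\hone(\Rn)\hookrightarrow L^1(\Rn)$, so it coincides with the extension $L_b$ of Corollary~\ref{h1L1lmo1}; the corollary therefore asserts that $L_b$ in fact takes $\hone(\Rn)$ boundedly into $\hone(\Rn)$.

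I do not expect any genuinely hard step: the entire substance sits in Theorems~\ref{h1atombish1} and \ref{thm: h1bh1}. The only point needing care — the same one as for Corollary~\ref{h1L1lmo1}, and precisely where the $p=1$ theory departs both from the $\hp$ ($p<1$) theory and from the naive expectation refuted by Bownik's example \cite{Bownik} — is the passage from a uniform estimate on individual atoms to an estimate on all of $\hone$. That passage is legitimate here solely because of the equivalence of the finite atomic norm with $\|\cdot\|_{\hone}$, so the essential input remains the identification $\honeatomb(\Rn)=\hone(\Rn)$; beyond invoking it, the argument is a formality.
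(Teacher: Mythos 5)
Your proposal is correct and follows essentially the same route the paper intends: the paper's own justification is the one-line remark that, since $\honeatomb(\Rn)=\hone(\Rn)$ by Theorem~\ref{h1atombish1}, the uniform bound on atoms from Theorem~\ref{thm: h1bh1} extends to all of $\hone$ via the finite-atomic-norm equivalence of \cite{MSV1} and \cite{HYY}, exactly as in Corollary~\ref{h1L1lmo1}. The only cosmetic caveat is that the $\gamma$-rescaling applies to $\hone$ atoms with exact vanishing integral when $r<1$ (the case treated in the proof of Theorem~\ref{h1atombish1}), which is the same convention the paper itself uses and suffices for the atomic decomposition.
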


\bigskip
  \footnotesize

 G. Dafni, \textsc{Concordia University, Department of Mathematics and Statistics, Montr\'{e}al, Qu\'{e}bec, H3G-1M8, Canada}\par\nopagebreak

  \medskip

  C. H. Lau, \textsc{Concordia University, Department of Mathematics and Statistics, Montr\'{e}al, Qu\'{e}bec, H3G-1M8, Canada}\par\nopagebreak

  \medskip


\end{document}